\documentclass[12pt]{amsart}
\usepackage{amssymb, hyperref, mathrsfs,xcolor}

\usepackage[margin=1.0in]{geometry}

\newtheorem{theoremIntro}{Theorem}[]

\newtheorem{questionIntro}{Question}

\newtheorem{theorem}{Theorem}[section]
\newtheorem{lemma}[theorem]{Lemma}
\newtheorem{proposition}[theorem]{Proposition}

\newtheorem{definition}[theorem]{Definition}
\newtheorem{corollary}[theorem]{Corollary}

\theoremstyle{remark}
\newtheorem{remark}[theorem]{Remark}
\newtheorem{example}[theorem]{Example}

\numberwithin{equation}{section}

\newcommand{\calU}{\ensuremath{\mathcal{U}}}
\newcommand{\calQ}{\ensuremath{\mathcal{Q}}}

\newcommand{\calS}{\ensuremath{\mathcal{S}}}
\newcommand{\calH}{\ensuremath{\mathcal{H}}}

\newcommand{\fin}{\ensuremath{\mathrm{fin}}}

\newcommand{\tors}{\ensuremath{\mathrm{tors}}}

\newcommand{\calC}{\ensuremath{\mathcal{C}}}
\newcommand{\calD}{\ensuremath{\mathcal{D}}}
\newcommand{\calM}{\ensuremath{\mathcal{M}}}
\newcommand{\calN}{\ensuremath{\mathcal{N}}}
\newcommand{\calW}{\ensuremath{\mathcal {W}}}
\newcommand{\calL}{\ensuremath{\mathcal {L}}}

\newcommand{\frakL}{\ensuremath{\mathfrak{L}}}
\newcommand{\frakl}{\ensuremath{\mathfrak{l}}}

\newcommand{\hra}{\hookrightarrow}
\newcommand{\xra}{\xrightarrow}

\newcommand{\modu}{\ensuremath{\mathrm{\;mod\;}}}
\newcommand{\mo}{{-1}}
\newcommand{\scrC}{\ensuremath{\mathscr{C}}}
\newcommand{\scrQ}{\ensuremath{\mathscr{Q}}}
\newcommand{\scrW}{\ensuremath{\mathscr{W}}}

\newcommand{\bbZ}{\ensuremath{\mathbb{Z}}}

\newcommand{\bbR}{\ensuremath{\mathbb{R}}}
\newcommand{\bbN}{\ensuremath{\mathbb{N}}}

\begin{document}

\title{On minimal complements in groups}

\author{Arindam Biswas}
\address{Universit\"at Wien, Fakult\"at f\"ur Mathematik, 
Oskar-Morgenstern-Platz 1, 1090 Wien, Austria.}
\curraddr{Department of Mathematics, Technion - Israel Institute of Technology, Haifa 32000, Israel}
\email{biswas@campus.technion.ac.il}
\thanks{}

\author{Jyoti Prakash Saha}
\address{Department of Mathematics, Indian Institute of Science Education and Research Bhopal, Bhopal Bypass Road, Bhauri, Bhopal 462066, Madhya Pradesh,
India}
\curraddr{}
\email{jpsaha@iiserb.ac.in}
\thanks{}

\subjclass[2010]{11B13, 05E15, 05B10, 11P70}

\keywords{Sumsets, Additive complements, Minimal complements, Representation of integers, Additive number theory}

\begin{abstract}
Let $W,W'\subseteq G$ be nonempty subsets in an arbitrary group $G$. The set $W'$ is said to be a complement to $W$ if $WW'=G$ and it is minimal if no proper subset of $W'$ is a complement to $W$. We show that, if $W$ is finite then every complement of $W$ has a minimal complement, answering a problem of Nathanson. This also shows the existence of minimal $r$-nets for every $r\geqslant 0$ in finitely generated groups. Further, we give necessary and sufficient conditions for the existence of minimal complements of a certain class of infinite subsets in finitely generated abelian groups, partially answering another problem of Nathanson. Finally, we provide infinitely many examples of infinite subsets of abelian groups of arbitrary finite rank admitting minimal complements.

\end{abstract}

\maketitle

\tableofcontents

\vspace*{-10mm}
\section{Introduction}

\subsection{Motivation}

Let $(G,.)$ be a group and $W\subseteq G$ be a nonempty subset. A nonempty set $W'\subseteq G$ is said to be a complement to $W$ if $$W\cdot W' = G.$$
Let $\mathcal{W}$ denote the set of all complements of $W$. Then it is clear that $\mathcal{W}\neq \emptyset$ (since $G\in \mathcal{W}$) and also the fact that the elements of $\mathcal{W}$ form a partially ordered set under inclusion.

\begin{definition}[minimal complement]
A complement $W'$ to $W$ is minimal if no proper subset of $W'$ is a complement to $W$, i.e., 
$$W\cdot W' = G \,\text{ and }\, W\cdot (W'\setminus \lbrace w'\rbrace)\subsetneq G \,\,\, \forall w'\in W'.$$
\end{definition}
 From now on, whenever we shall use the word ``complement" we shall mean additive or multiplicative complement (depending on the operation in the ambient group) and not the set theoretic complement unless explicitly mentioned otherwise. We will omit the symbol ``$\,\cdot\,$'' and denote $A\cdot B$ by $AB$. If $A$ (resp. $B$) contains only one element $a$ (resp. $b$), then $AB$ is denoted by $aB$ (resp. $Ab$). Moreover, in case of abelian groups we shall sometimes use $+$ to denote the group operation. 
 
 Given a minimal complement $W'$ of $W$, we see that the right translation $W'g$ is also a minimal complement of $W$ and $W'$ is a minimal complement of $gW$ for all $g \in G$. Thus, the existence of a minimal complement of a nonempty subset is equivalent to the existence of a minimal complement of any of its left translates.

It was shown by Nathanson (see \cite[Theorem 8]{NathansonAddNT4}) that for a non-empty, finite subset $W$ in the additive group $\mathbb{Z}$, any complement to $W$ contains a minimal complement. In the same paper he asked the following questions:

\begin{questionIntro}
\cite[Problem 11]{NathansonAddNT4}
\label{nathansonprob11}
``Let $W$ be an infinite set of integers. Does there exist a minimal
complement to $W$? Does there exist a complement to $W$ that does not contain a
minimal complement?''
\end{questionIntro}

\begin{questionIntro}
\cite[Problem 12]{NathansonAddNT4}
\label{nathansonprob12}
``Let $G$ be an infinite group, and let $W$ be a finite subset of $G$. Does
there exist a minimal complement to $W$? Does there exist a complement to $W$ that
does not contain a minimal complement?''
\end{questionIntro}

\begin{questionIntro}
\cite[Problem 13]{NathansonAddNT4} \label{nathansonprob13}
``Let $G$ be an infinite group, and let $W$ be an infinite subset of $G$.
Does there exist a minimal complement to $W$? Does there exist a complement to
$W$ that does not contain a minimal complement?''
\end{questionIntro}

Since then the problems have generated considerable interest. Chen and Yang in 2012 gave examples of two infinite sets $W_1,W_2 \subset \mathbb{Z}$, such that $W_1$ has a complement that does not contain a minimal complement and every complement to $W_2$ contains a minimal complement (see \cite[Remark 1]{ChenYang12}). They also gave certain necessary and certain sufficient conditions on the infinite set $W \subset \mathbb{Z}$ such that $W$ has a minimal complement (see \cite[Theorems 1, 2]{ChenYang12}). Very recently, Kiss, S\'{a}ndor and Yang \cite{KissSandorYangJCT19} succeeded in giving necessary and sufficient conditions for the existence of minimal complements of several other class of infinite sets in $\mathbb{Z}$ (which were not covered in the previous work of Chen and Yang). See \cite[Theorems 1, 2, 3]{KissSandorYangJCT19}. In \cite{RuzsaLacunary}, Ruzsa studied additive complements (defined in a different sense) of lacunary sequences. 

\subsection{Statement of results}

All the aforementioned progresses were in the setting of Question \ref{nathansonprob11}. In this article, we deal with the Questions \ref{nathansonprob12} and \ref{nathansonprob13}. Specifically, we show the following results: 
\begin{theoremIntro}
\label{theorem1.1}
Let $G$ be an arbitrary group with $S$ a nonempty finite subset of $G$. Then every complement of $S$ in $G$ has a minimal complement.
\end{theoremIntro}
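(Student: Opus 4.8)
The plan is to apply Zorn's lemma to the poset of complements of $S$ that are contained in the given complement, and to extract a minimal element. The finiteness of $S$ will enter at exactly one point: to guarantee that the intersection of a chain of complements is again a complement, which is the hypothesis Zorn's lemma requires.

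Concretely, let $C$ be a complement of $S$, so $SC=G$, and let $\mathcal{P}$ be the collection of all $C'\subseteq C$ with $SC'=G$, partially ordered by inclusion. Then $\mathcal{P}\neq\emptyset$ since $C\in\mathcal{P}$, and a minimal element of $(\mathcal{P},\subseteq)$ is precisely a minimal complement of $S$ contained in $C$: any proper subset of such an element is itself a subset of $C$, so were it a complement it would lie in $\mathcal{P}$ and contradict minimality. Equivalently, I would look for a maximal element of $(\mathcal{P},\supseteq)$ and produce it by Zorn's lemma.

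The key step is to verify that every chain in $(\mathcal{P},\supseteq)$ has an upper bound there. So let $\{C_i\}_{i\in I}$ be a nonempty family of members of $\mathcal{P}$ totally ordered by inclusion, and put $C_\infty=\bigcap_{i\in I}C_i\subseteq C$; I must show $SC_\infty=G$. Fix $g\in G$ and set $R_i=\{s\in S:\ s^{-1}g\in C_i\}$ for each $i$. Each $R_i$ is nonempty because $SC_i=G$, and $C_i\subseteq C_j$ forces $R_i\subseteq R_j$, so the sets $R_i$ form a chain of nonempty subsets of the \emph{finite} set $S$. Since $S$ has only finitely many subsets, this chain attains a minimum $R_{i_0}$, whence $\bigcap_{i\in I}R_i=R_{i_0}\neq\emptyset$; any $s$ in this intersection satisfies $s^{-1}g\in C_i$ for all $i$, i.e. $s^{-1}g\in C_\infty$, so $g=s(s^{-1}g)\in SC_\infty$. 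As $g$ was arbitrary, $SC_\infty=G$, hence $C_\infty\in\mathcal{P}$ is the required upper bound. Zorn's lemma then yields a maximal element $C'$ of $(\mathcal{P},\supseteq)$, which is a minimal complement of $S$ contained in $C$.

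The one genuinely essential ingredient — and the step I expect to be the crux — is the assertion that a chain of nonempty subsets of $S$ has nonempty intersection, which is exactly where the finiteness of $S$ is indispensable. For infinite $S$ the intersection of a descending chain of complements need not be a complement, so this argument cannot be pushed through; this is consistent with the failure of the analogous statement in the infinite setting addressed by the subsequent questions.
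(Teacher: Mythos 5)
Your proof is correct. It is essentially the Zorn's lemma argument the paper uses for the uncountable case (Proposition \ref{prop2}), but you apply it uniformly, which makes the paper's separate enumeration argument for countable groups (Proposition \ref{prop1}) unnecessary. The crux in both cases is the same pigeonhole use of the finiteness of $S$ to show that the intersection of a chain of complements is again a complement; your verification via the descending chain of nonempty sets $R_i=\{s\in S:\ s^{-1}g\in C_i\}$, which must stabilize at a nonempty minimum inside the finite set $S$, is cleaner than the paper's route, which first disposes of the case where some member of the chain is already a lower bound and then decomposes the chain into finitely many subchains $\calC_s$ indexed by $s\in S$ and argues that one of them is cofinal. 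What the paper's two-pronged treatment buys is an explicit, choice-free construction of the minimal complement in the countable case (successively deleting enumerated elements); what your single argument buys is brevity and uniformity at the cost of invoking Zorn's lemma even when $G$ is countable. Your closing remark correctly identifies why the argument cannot extend to infinite $S$.
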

This answers Question \ref{nathansonprob12} of Nathanson. Next, we turn to Question \ref{nathansonprob13}. Before commencing the discussion in detail, we state that it has a simple answer in certain cases, for instance, when $W$ is a subgroup or when $W$ behaves well under the quotient map by a subgroup. Namely, in these cases a minimal complement always exist. See Proposition \ref{prop4.1} and Lemma \ref{Lemma: coset one}. For general infinite subsets, the situation is more delicate. To give an answer to Question \ref{nathansonprob13}, one needs to consider the infinite subsets which have less algebraic structure. 

Our goal in this case is to establish the existence of minimal complements for a large class of infinite sets in finitely generated abelian groups. This will give the claimed partial solution to Question \ref{nathansonprob13} of Nathanson. In section \ref{Sec: Minimal complement}, we focus on the minimal complements of certain infinite subsets of free abelian groups of finite rank, which are of the form $\bbZ^d$ for some integer $d\geqslant 1$. It is interesting to consider the subsets $X$ of $\bbZ^d$ such that $x + \bbN u_1 + \cdots + \bbN u_d$ is contained in $X$ for any $x\in X$, i.e.,
\begin{equation}
\label{Eqn: preperiodic}
X \supseteq X + \bbN u_1 + \cdots + \bbN u_d,
\end{equation}
where $u_1, \cdots, u_d$ are elements of $\bbZ^d$ satisfying no nontrivial $\bbZ$-linear relation. However it turns out that such sets do not necessarily have a minimal complement. For instance, $\bbZ^{d-1} \times \bbN$ is a subset of $\bbZ^d$ which satisfies Equation \eqref{Eqn: preperiodic} with $u_i = e_i$, but do not have any minimal complement. 

To obtain examples of infinite subsets of $\bbZ^d$ having minimal complements, we consider the periodic subsets of $\bbZ^d$ (these are subsets of $\bbZ^d$ satisfying Equation \eqref{Eqn: preperiodic} along with a finiteness condition, see Definition \ref{Defn: periodic}). Unfortunately, there exist periodic subsets of $\bbZ^d$, which do not admit any minimal complement (see Proposition \ref{Cor: Nd minimal complement}). So we consider a more general class of subsets of $\bbZ^d$ satisfying a weaker version of Equation \eqref{Eqn: preperiodic} along with certain finiteness condition (which we call \textit{eventually periodic subsets},
see Definition \ref{Defn: periodic} - these are $d$-dimensional analogues of the eventually periodic sets in $\mathbb{Z}$ considered by Kiss--S\'andor--Yang in \cite{KissSandorYangJCT19}). Given an eventually periodic subset $W$ of $\bbZ^d$ (with periods $u_1, \cdots, u_d$), by Theorem \ref{Thm: Structure of Eventually Periodic}, there exist subsets $\calW, \scrW$ of $W$ such that the equality 
\begin{equation}
W  = \scrW \sqcup (\calW + (\bbN u_1 + \cdots + \bbN u_d))
\end{equation}
holds. It turns out that several (in fact, infinitely many) eventually periodic subsets of $\bbZ^d$ have minimal complements (see Proposition \ref{Prop:MinComp of even perio mod 2} and Example \ref{Example infinite}). The following result provides a necessary condition for an eventually periodic subset of $\bbZ^d$ to have a minimal complement. 

\begin{theoremIntro}
[Theorem \ref{Thm: existence of min complement implies}]
\label{theorem1.1.2}
Let $W$ be an eventually periodic subset of $\bbZ^d$ with periods $u_1, \cdots, u_d$. Let $\scrW_1$ be as in Theorem \ref{Thm: existence of min complement implies}. Suppose $W$ has a minimal complement in $\bbZ^d$. Then $\scrW_1$ is nonempty and there exists a nonempty finite subset $\calM$ of $\bbZ^d$ such that the following conditions hold. 
\begin{enumerate}
\item The quotient map $\pi: \calM \to \bbZ^d/\calL$ is injective.
\item The quotient map $\pi: (\calM + (\calW \cup \scrW_1)) \to \bbZ^d/\calL$ is surjective.
\item For any $m\in \calM$, there exists $w\in \scrW_1$ such that $m + w \not\equiv m' + w' \modu \calL$ for any $m'\in \calM, w'\in \calW$.
\end{enumerate}
\end{theoremIntro}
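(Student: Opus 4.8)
The plan is to begin with a minimal complement $C$ of $W$ and to transport the whole configuration into the finite quotient group $Q := \bbZ^d/\calL$, where $\calL = \bbZ u_1 + \cdots + \bbZ u_d$ is the finite-index lattice spanned by the periods and $\pi \colon \bbZ^d \to Q$ is the projection. By Theorem~\ref{Thm: Structure of Eventually Periodic} we may write $W = \scrW \sqcup (\calW + \Lambda^+)$ with $\Lambda^+ := \bbN u_1 + \cdots + \bbN u_d$; since $\Lambda^+ \subseteq \calL$ we have $\pi(W) = \pi(\scrW) \cup \pi(\calW)$, and the periodic part $\calW + \Lambda^+$ fills out exactly the cosets in $\pi(\calW)$ once one moves far into the cone. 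Accordingly I would classify the cosets of $Q$ as \emph{filled} (those in $\pi(\calW)$, where every sufficiently deep point lies in $W$) and \emph{unfilled}, and read $\scrW_1$ as recording the elements of $\scrW$ lying over unfilled cosets, which are the only elements of $W$ able to cover deep points over such cosets.

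First I would extract the finite set $\calM$. The covering condition $W + C = \bbZ^d$ only sees the finite group $Q$ after projection, while the cone-periodicity of $W$ makes the contributions of two elements of $C$ lying in the same coset redundant deep in the cone; this is what lets one replace $C$ by a finite set of coset representatives $\calM$ for the purpose of the deep covering, and it forces finiteness. Condition~(1), injectivity of $\pi$ on $\calM$, then comes directly from minimality: if two chosen elements shared a coset, periodicity would permit deleting one from $C$ while preserving $W + C = \bbZ^d$, contradicting minimality. Condition~(2) is the coset-level shadow of $W + C = \bbZ^d$: every class of $Q$ must be attained, and the only available contributions from $W$ are through $\calW$ (the filled cosets) and through $\scrW_1$ (the unfilled cosets that nonetheless carry points of $W$), which is precisely surjectivity of $\pi$ on $\calM + (\calW \cup \scrW_1)$.

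The heart of the proof, and the step I expect to be the main obstacle, is condition~(3) together with the nonemptiness of $\scrW_1$. Minimality of $C$ supplies, for each $c \in C$, a witness point covered by $c$ alone, namely some $x$ with $x - c \in W$ but $x - c' \notin W$ for all $c' \in C \setminus \{c\}$. The difficulty is a genuine tension in locating these witnesses: to read off a clean condition in $Q$ one wants $x$ deep in the cone, where membership in $W$ is governed purely by whether a coset is filled, but a \emph{deep} point of $W$ automatically lies over a filled coset and hence cannot come from $\scrW_1$. Resolving this --- by showing that for each $m \in \calM$ the unique-coverage witness can be arranged so that the covering element of $W$ is exceptional, i.e. lies in $\scrW_1$, and then projecting the exclusion $x - c' \notin W$ to obtain $\pi(m + w) \notin \pi(\calM + \calW)$ --- is exactly the content of~(3). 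The same mechanism yields $\scrW_1 \neq \emptyset$: were it empty, $W$ would coincide deep in the cone with a union of full cosets, i.e. a cylinder of the type $\bbZ^{d-1} \times \bbN$, which admits no minimal complement, contradicting the hypothesis. Beyond this core tension, the remaining care needed is to make the passage ``deep into the cone'' uniform over the finitely many cosets and over $\calM$, so that the shallow, finite discrepancies recorded by $\scrW$ cannot interfere with the coset-level bookkeeping.
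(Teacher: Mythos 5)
Your setup --- passing to the finite quotient $\bbZ^d/\calL$, using the decomposition $W = \scrW \sqcup (\calW + (\bbN u_1 + \cdots + \bbN u_d))$, and reading $\scrW_1$ as the part of $\scrW$ lying over unfilled cosets --- is the same framework the paper uses. But there is a genuine gap at exactly the place you flag as the heart of the proof: for condition (3), and for $\scrW_1 \neq \emptyset$, you describe the tension (unique-coverage witnesses want to sit deep in the cone, yet deep points of $W$ lie over filled cosets and so cannot come from $\scrW_1$) and then write that resolving it ``is exactly the content of (3)''. That is a restatement of the goal, not an argument. The paper resolves it by contraposition: if (3) fails for some $m \in \calM$, one shows $M\setminus\{m\}$ is still a complement. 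Given $x = m + w_0$ with $w_0 \in W$, if $w_0 \in \scrW_1$ the assumed congruence $m + w_0 \equiv m' + w' \modu \calL$ with $w' \in \calW$ lets one rewrite $x = (m' - (\lambda_1 u_1 + \cdots + \lambda_d u_d)) + (w' + \cdots)$, where the first summand is drawn from the infinite fibre of $M_\infty$ over $\pi(m')$, taken deep enough that the second summand lands in $\calW + (\bbN u_1 + \cdots + \bbN u_d) \subseteq W$; if $w_0 \notin \scrW_1$, then $\pi(w_0) \in \pi(\calW)$ and the same backward shift applied to $m$ itself works. This explicit exchange is the missing content, and the same removability mechanism is what proves $\scrW_1 \neq \emptyset$ --- not your claim that $W$ would become ``a cylinder of the type $\bbZ^{d-1}\times\bbN$'', which is not what $\scrW_1 = \emptyset$ means: $W$ would still be contained in finitely many translates of the cone, and the relevant fact is that it then behaves like a periodic set, every element of a complement being removable.

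Two further points. First, your justification of condition (1) is wrong: a minimal complement $M$ of an eventually periodic set typically has infinitely many elements in a single coset of $\calL$ (indeed $M_\infty$ is infinite while $\bbZ^d/\calL$ is finite), and deleting one of two coset-sharing elements does not in general preserve $W + M = \bbZ^d$. Injectivity of $\pi$ on $\calM$ is not a consequence of minimality; it holds because $\calM$ is \emph{defined} as a set of coset representatives. Second, you never make precise which elements of the complement the representatives are drawn from. The paper splits $M = M_\fin \sqcup M_\infty$ according to whether the fibre over a coset descends arbitrarily far into the negative cone, shows that $M_\fin + W$ misses a deep region so that $M_\infty$ alone must cover it, and takes $\calM \subseteq M_\infty$. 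Without this restriction, condition (2) degenerates to the weaker statement $\pi(M) + \pi(W) = \bbZ^d/\calL$, and condition (3) can fail for a representative drawn from $M_\fin$.
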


This is a simplified version of Theorem \ref{Thm: existence of min complement implies}. We refer to Theorem \ref{Thm: existence of min complement implies} for a detailed statement.

As a consequence, we obtain that no periodic subset of $\bbZ^d$ has a minimal complement (Corollary \ref{Cor: No periodic has minimal}). We also prove the result below, giving a sufficient condition for an eventually periodic subset of $\bbZ^d$ to have a minimal complement. 

\begin{theoremIntro}
[Theorem \ref{Thm: Implies existence of min comple}]
\label{theorem1.1.3}
Let $W$ be an eventually periodic subset of $\bbZ^d$ with periods $u_1, \cdots, u_d$. Let $\scrW_1$ be as in Theorem \ref{Thm: existence of min complement implies}. Suppose $\scrW_1$ is nonempty and there exists a nonempty finite subset $\calM$ of $\bbZ^d$ such that the following conditions hold. 
\begin{enumerate}
\item The quotient map $\pi: \calM\to \bbZ^d/\calL$ is injective.
\item The quotient map $\pi: (\calM + (\calW \cup \scrW_1)) \to \bbZ^d/\calL$ is surjective.
\item For any $m\in \calM$, there exists $w\in \scrW_1$ such that $m + w \not\equiv m' + w' \modu \calL$ for any $m'\in \calM\setminus\{m\}, w'\in \calW\cup \scrW_1$.
\end{enumerate}
Then $W$ has a minimal complement in $\bbZ^d$.
\end{theoremIntro}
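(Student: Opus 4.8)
The plan is to write down an explicit complement built from $\calM$ and then certify its minimality one element at a time. Put $P := \bbN u_1 + \cdots + \bbN u_d$, let $\calL$ be the full-rank lattice generated by $u_1,\dots,u_d$, and let $\pi\colon \bbZ^d \to \bbZ^d/\calL =: Q$ be the quotient. Since every $u_i$ dies in $Q$, one has the crucial collapsing identity $\pi(P)=\{0\}$, so $Q$ is finite and $\pi(\calW + P)=\pi(\calW)$. I also fix a linear functional $h\colon \bbZ^d \to \bbR$ with $h(u_i)>0$ for all $i$, which totally orders the cone and lets me speak of ``high'' and ``low'' points within a coset. Using the decomposition $W = \scrW \sqcup (\calW + P)$ from Theorem \ref{Thm: Structure of Eventually Periodic}, the natural candidate is the downward cone over $\calM$,
\[
C := \calM - P = \{\, m - p : m\in\calM,\ p\in P\,\}.
\]
Because $\pi(P)=\{0\}$, each ray $m-P$ lies in the single coset $\pi(m)$, so condition (1) (injectivity of $\pi$ on $\calM$) makes the rays pairwise disjoint and places them in distinct cosets; this is exactly what lets the witnesses below detect one $m$ at a time.

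For coverage I would verify $W+C=\bbZ^d$ coset by coset. From $W\supseteq \calW+P$ one computes $(\calW+P)+(\calM-P)=\calM+\calW+\calL=\pi^{-1}(\pi(\calM+\calW))$, so every coset in $\pi(\calM+\calW)$ is covered in full, both its high and its low points, because $-P$ reaches arbitrarily low while the $+P$ tail of $W$ reaches arbitrarily high. For the remaining cosets condition (2) guarantees that they lie in $\pi(\calM+\scrW_1)$; since $\pi(P)=\{0\}$ collapses the cone, matching residues through $\scrW_1$ and again using that $-P$ reaches arbitrarily low while the cone tail of $W$ reaches arbitrarily high covers these cosets as well. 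Thus conditions (1)–(2) together give precisely that $C$ is a complement of $W$.

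The heart of the argument is minimality: for every $c=m-p\in C$ I must exhibit a point $t_c\in (W+c)\setminus\bigl(W+(C\setminus\{c\})\bigr)$. I would take the exclusive witness $\omega\in\scrW_1$ furnished for $m$ by condition (3) and set $t_c := c+\omega$, which lies in $W+c$ since $\omega\in\scrW_1\subseteq W$. If some other $c'=m'-p'$ also covered $t_c$, then $t_c-c'=\omega + (m-m') + (p'-p)$ would lie in $W$, so its residue $q_m-\pi(m')$, with $q_m:=\pi(m+\omega)$, would lie in $\pi(\calW)\cup\pi(\scrW)$. For $m'\neq m$ the exclusivity clause of condition (3) — that $m+\omega\not\equiv m'+w' \modu \calL$ for all $w'\in\calW\cup\scrW_1$ — rules this out at the level of residues. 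The genuinely delicate case, which I expect to be the main obstacle, is $m'=m$ (the same ray): here residues no longer separate $c$ from $c'$, and I must invoke the defining property of $\scrW_1$ recorded in Theorem \ref{Thm: existence of min complement implies}, which pins its elements to the ``top'' of $W$ along the cone, so that $\omega$ cannot be shifted up by a nonzero element of $P$ and remain in $W$; this excludes every lower $c'$ in the ray, while the height functional $h$ excludes the higher ones. Assembling these, each $c$ acquires a private witness $t_c$, no element of $C$ can be deleted, and $C$ is a minimal complement. Reconciling this element-by-element necessity with the coverage requirement — which forces $C$ to be infinite — is precisely where conditions (2) and (3) have to be used in tandem.
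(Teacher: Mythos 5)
Your candidate $C=\calM-P$ is not a complement of $W$ in general, so the argument breaks at the coverage step, before minimality is even at issue. The flaw is in the sentence claiming that cosets outside $\pi(\calM+\calW)$ are covered ``again using that $-P$ reaches arbitrarily low while the cone tail of $W$ reaches arbitrarily high'': in such a coset the only elements of $W$ available for covering are those of $\scrW_1$, which is a \emph{finite} set with no $+P$ tail, so $C+\scrW_1=\calM+\scrW_1-P$ is bounded above (with respect to your height functional $h$) inside that coset and misses all sufficiently high points. Concretely, take $d=1$, $u_1=2$, $W=\{0,2\}\sqcup(1+2\bbN)$; then $\calW=\{1\}$, $\scrW_1=\{0,2\}$, and $\calM=\{0\}$ satisfies (1)--(3) (condition (3) is vacuous for a singleton). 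Your $C=-2\bbN$ gives $W+C=(2\bbZ+1)\cup\{\dots,-2,0,2\}$, which misses $4$. Any complement of this $W$ must contain arbitrarily large positive even integers, so no set of the form $\calM-P$ can work.

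This is exactly why the paper's proof starts instead from the full preimage $C=\calM+\calL$ (whole cosets, unbounded in both directions along the lattice), observes that $C+\scrW_1$ contains the set $C'$ of points lying in no coset of $\calM+\calW$, and then extracts a subset $M\subseteq C$ that is \emph{minimal for the auxiliary covering property} $M+\scrW_1\supseteq C'$, via the enumeration-and-pigeonhole argument of Proposition \ref{prop1} (finiteness of $\scrW_1$ is what makes the pigeonhole work). Condition (3) is then used to show that each fibre $M_m$ still descends arbitrarily low, so $M+W$ covers the cosets of $\calM+\calW$ through the periodic part $\calW+P$, while $M+\scrW_1\supseteq C'$ handles the remaining cosets; minimality of $M$ as a complement is inherited from minimality for the auxiliary property. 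Note also that your minimality argument has the secondary gap you flagged yourself: for $c=m-p$ and $c'=m-p'$ on the same ray, $t_c\in W+c'$ iff $\omega+(p'-p)\in W$, and since $p'-p$ ranges over all of $\calL$ (not just $\pm P$) and $\scrW_1$ may contain several elements in a single coset, the defining property of $\scrW_1$ does not rule this out. But the coverage failure is the decisive one: the theorem's conclusion is correct, yet the minimal complement it produces is in general not of the simple cone form you propose.
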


Using the above, we get a necessary and sufficient condition for certain types of eventually periodic subsets of $\bbZ^d$ to have a minimal complement.

\begin{theoremIntro}
[Theorem \ref{Thm: Even peri 1 has min comp iff}]
\label{theorem1.1.4}
Let $W$ be an eventually periodic subset of $\bbZ^d$ with periods $u_1, \cdots, u_d$. Let $\scrW_1$ be as in Theorem \ref{Thm: existence of min complement implies}. Suppose $\scrW_1$ contains only one element. Then $W$ has a minimal complement in $\bbZ^d$ if and only if there exists a nonempty finite subset $\calM$ of $\bbZ^d$ satisfying the following. 
\begin{enumerate}
\item The quotient map $\pi: \calM\to \bbZ^d/\calL$ is injective.
\item The quotient map $\pi: (\calM+ (\calW\cup \scrW_1)) \to \bbZ^d/\calL$ is surjective.
\item For any $m\in \calM$, there exists $w\in \scrW_1$ such that $m + w \not\equiv m' + w' \modu \calL$ for any $m'\in \calM\setminus\{m\}, w'\in \calW\cup \scrW_1$.
\end{enumerate}
\end{theoremIntro}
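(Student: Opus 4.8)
The plan is to derive Theorem~\ref{theorem1.1.4} from the necessary condition (Theorem~\ref{Thm: existence of min complement implies}) and the sufficient condition (Theorem~\ref{Thm: Implies existence of min comple}), using the singleton hypothesis on $\scrW_1$ to reconcile the two. First observe that conditions (1)--(3) stated in Theorem~\ref{theorem1.1.4} are \emph{verbatim} the hypotheses of Theorem~\ref{Thm: Implies existence of min comple}. Hence the ``if'' direction is immediate: if a nonempty finite $\calM\subseteq\bbZ^d$ satisfying (1)--(3) exists, then Theorem~\ref{Thm: Implies existence of min comple} supplies a minimal complement of $W$.

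For the ``only if'' direction, suppose $W$ has a minimal complement in $\bbZ^d$. Theorem~\ref{Thm: existence of min complement implies} then produces a nonempty finite set $\calM\subseteq\bbZ^d$ satisfying (1), (2), and the weaker separation condition (call it (3$'$)): for each $m\in\calM$ there exists $w\in\scrW_1$ with $m+w\not\equiv m'+w'\modu\calL$ for all $m'\in\calM$ and all $w'\in\calW$. Note that in (3$'$) the index $m'$ ranges over all of $\calM$, while $w'$ is restricted to $\calW$; in the target condition (3) the roles are partly swapped, with $m'$ ranging over $\calM\setminus\{m\}$ and $w'$ over the larger set $\calW\cup\scrW_1$. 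The task is to upgrade (3$'$) to (3) for this same $\calM$; together with (1) and (2) this will complete the proof.

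This is exactly where the hypothesis $\scrW_1=\{s\}$ is used. Since $\scrW_1$ is a singleton, in both (3$'$) and (3) the clause ``there exists $w\in\scrW_1$'' forces $w=s$. Fix $m\in\calM$ and $m'\in\calM\setminus\{m\}$; we must verify $m+s\not\equiv m'+w'\modu\calL$ for every $w'\in\calW\cup\scrW_1$. If $w'\in\calW$, the noncongruence holds by (3$'$), which asserts it for all $m'\in\calM$ and in particular for $m'\neq m$. If $w'=s$, then $m+s\equiv m'+s\modu\calL$ would give $m\equiv m'\modu\calL$ with $m\neq m'$, contradicting the injectivity of $\pi$ on $\calM$ granted by~(1). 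Hence (3) holds, so $\calM$ satisfies all three conditions of Theorem~\ref{theorem1.1.4}.

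The one point requiring care is the mismatch between the quantifier ranges in the necessary and the sufficient conditions. When $\scrW_1$ has more than one element these conditions are genuinely inequivalent and no such implication is available. It is precisely the collapse of the existential quantifier over $\scrW_1$ to the single choice $w=s$, together with injectivity condition (1) absorbing the remaining case $w'=s$, that makes the necessary and sufficient conditions coincide and yields the stated equivalence.
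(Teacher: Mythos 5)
Your proposal is correct and follows essentially the same route as the paper: the ``if'' direction is a direct application of Theorem~\ref{Thm: Implies existence of min comple}, and the ``only if'' direction takes the set $\calM$ produced by Theorem~\ref{Thm: existence of min complement implies} (injectivity coming from its construction as a section of $\pi$ over $\pi(M_\infty)$) and upgrades the separation condition using the singleton hypothesis on $\scrW_1$ together with injectivity to handle the case $w'\in\scrW_1$. Your write-up is in fact slightly more explicit than the paper's at the step where $m+s\equiv m'+s$ is ruled out, which the paper attributes only to $\scrW_1$ being a singleton while tacitly also using condition~(1).
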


\begin{remark}
Note that if the set $\calM$ as in Theorem \ref{theorem1.1.3} (resp. Theorem \ref{theorem1.1.4}) contains only one element, then the condition (3) in Theorem \ref{theorem1.1.3} (resp. Theorem \ref{theorem1.1.4}) is vacuously true. 
\end{remark}

Finally, we conclude a general statement on the existence of minimal complements for certain infinite sets in finitely generated abelian groups.

\begin{theoremIntro}
[Theorem \ref{sec5Thm}]
Let $G$ be a finitely generated (infinite) abelian group with the decomposition $G\simeq \mathbb{Z}^{d}\times(\mathbb{Z}/a_{1}\mathbb{Z})\times \cdots \times(\mathbb{Z}/a_{s}\mathbb{Z})$ with $d\geqslant 1$ and where $a_{1}|a_{2}|...|a_{s}$ are positive integers $>1$ (determined uniquely by the isomorphism type of $G$). Suppose $W\subset \mathbb{Z}^{d}$ such that either $W$ satisfies the conditions of Theorem \ref{theorem1.1.3} (equivalently Theorem \ref{Thm: Implies existence of min comple}) or $W$ is a product set of the form $W_{1}\times W_{2}\times \cdots \times W_{d}$ where each $W_{i}\subseteq \mathbb{Z}, \forall 1\leqslant i\leqslant d$ has minimal complements in $\mathbb{Z}$, then $W\times H$ will have a minimal complement in $G$ where $H \subseteq (\mathbb{Z}/a_{1}\mathbb{Z})\times \cdots \times(\mathbb{Z}/a_{s}\mathbb{Z})$ is any arbitrary nonempty subset. 
\end{theoremIntro}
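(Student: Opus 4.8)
The plan is to isolate a single product lemma and then assemble everything from it. The lemma I would prove first is: if $G_1,G_2$ are groups, $U\subseteq G_1$ and $V\subseteq G_2$ are nonempty, $A$ is a minimal complement of $U$ in $G_1$, and $B$ is a minimal complement of $V$ in $G_2$, then $A\times B$ is a minimal complement of $U\times V$ in $G_1\times G_2$. Granting this, the theorem reduces to exhibiting a minimal complement of $W$ in $\bbZ^d$ and a minimal complement of $H$ in the finite torsion part, and then taking their product.

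For the lemma, the complement property is immediate from $(U\times V)(A\times B)=(UA)\times(VB)=G_1\times G_2$. The substance is minimality, and here I would use the witnesses supplied by minimality in each factor. Fix $a_0\in A$; since $A$ is minimal, there is $g_1\in G_1$ with $g_1\in UA$ but $g_1\notin U(A\setminus\{a_0\})$, and consequently every factorization $g_1=ua$ with $u\in U,\,a\in A$ forces $a=a_0$. Choosing an analogous $g_2\in G_2$ for each $b_0\in B$, I would verify that $(g_1,g_2)$ cannot be covered after deleting $(a_0,b_0)$: any factorization $(g_1,g_2)=(u,v)(a,b)$ with $(a,b)\in A\times B$ yields $g_1=ua$ and $g_2=vb$, whence $a=a_0$ and $b=b_0$ by the factorwise property, so $(a,b)=(a_0,b_0)$. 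Thus $(g_1,g_2)\notin (U\times V)\big((A\times B)\setminus\{(a_0,b_0)\}\big)$, every element of $A\times B$ is essential, and $A\times B$ is minimal.

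To finish, I would use the decomposition $G\simeq \bbZ^d\times T$ with $T=(\bbZ/a_1\bbZ)\times\cdots\times(\bbZ/a_s\bbZ)$, noting that minimal complements transport across this isomorphism, so it suffices to work inside $\bbZ^d\times T$. First I produce a minimal complement $C$ of $W$ in $\bbZ^d$: in the first case this is exactly Theorem \ref{theorem1.1.3}, while in the second case, with $W=W_1\times\cdots\times W_d$ and $C_i$ a minimal complement of $W_i$ in $\bbZ$, applying the lemma $d-1$ times shows $C_1\times\cdots\times C_d$ works. Next, since $T$ is finite and $H\subseteq T$ is nonempty, $T$ is itself a complement of $H$, so Theorem \ref{theorem1.1} provides a minimal complement $D$ of $H$ inside $T$. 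A final application of the lemma with $U=W$, $V=H$, $A=C$, $B=D$ gives that $C\times D$ is a minimal complement of $W\times H$ in $\bbZ^d\times T\simeq G$.

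The one step I expect to require genuine care is the minimality half of the product lemma — precisely the claim that the uniquely covered witnesses of the two factors combine into a uniquely covered witness of the product. Once that is in place, the two constructions of a minimal complement of $W$ and the existence of $D$ (immediate from the finite case, Theorem \ref{theorem1.1}) are routine, and the theorem follows by a single product.
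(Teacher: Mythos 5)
Your proposal is correct and follows essentially the same route as the paper: the key product lemma is the paper's Proposition \ref{MinCompProd} (proved there by the same ``uniquely covered witness'' argument you describe), the minimal complement of $W$ comes from Theorem \ref{Thm: Implies existence of min comple} or from iterating the product lemma over the factors $W_i$, and the minimal complement of $H$ in the finite torsion part comes from Theorem \ref{theorem1.1}, exactly as in the paper's Lemma \ref{lemmasec5}. No gaps.
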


See also Remark \ref{Remark:MinCompEg} for further discussion on more general subsets of finitely generated abelian groups admitting minimal complements.

\section{Existence of minimal complements for finite sets in arbitrary groups}
\label{sec2}

In this section we answer Question \ref{nathansonprob12}.

\begin{proof}[Proof of Theorem \ref{theorem1.1}]
We divide the proof of Theorem \ref{theorem1.1} into two propositions, see Proposition \ref{prop1} (for the countable case) and Proposition \ref{prop2} (for the uncountable case).
\end{proof}

\begin{proposition}
\label{prop1}
Every complement of a nonempty finite subset of an at most countable group contains a minimal complement. 
\end{proposition}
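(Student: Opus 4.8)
The plan is to start from the given complement $C$ of $S$ and thin it greedily, discarding one element at a time whenever the deletion does not destroy the equation $SC=G$, and to let $C'$ be the set of elements that survive the whole process. The decisive feature, which separates the finite-$S$ case from the pathologies occurring for infinite $W$, is that each $g\in G$ has at most $|S|$ preimages of the form $s^{-1}g$ with $s\in S$; hence for every $g\in G$ the set of elements of $C$ that \emph{cover} $g$ (that is, those $c\in C$ with $g\in Sc$) is finite, of size at most $|S|$. This bounded multiplicity is exactly what will let the limit of infinitely many deletions remain a complement.

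In detail, since $G$ is at most countable the complement $C$ is at most countable, and the case $|C|<\infty$ is handled by finitely many deletions; so assume an enumeration $C=\{c_1,c_2,\dots\}$. Put $C_0=C$ and, for $n\geqslant 1$, set $C_n=C_{n-1}\setminus\{c_n\}$ if $S(C_{n-1}\setminus\{c_n\})=G$, and $C_n=C_{n-1}$ otherwise; by construction each $C_n$ is a complement. Finally let $C'=\{c\in C: c\in C_n\text{ for all }n\geqslant 0\}$ be the collection of elements never removed. Minimality of $C'$ is then automatic: if $c_n\in C'$ then at stage $n$ we declined to remove it, which forces $S(C_{n-1}\setminus\{c_n\})\subsetneq G$, so there is some $g\in G$ lying in $SC_{n-1}$ but not in $S(C_{n-1}\setminus\{c_n\})$; thus $c_n$ is the unique coverer of $g$ inside $C_{n-1}$. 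Since $C'\subseteq C_{n-1}$ cannot acquire new coverers of $g$, the element $c_n$ remains the unique coverer of $g$ in $C'$, whence $S(C'\setminus\{c_n\})\subsetneq G$, as required.

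The step I expect to be the crux is verifying that $C'$ itself is a complement, i.e.\ that deleting infinitely many elements does not, in the limit, leave some $g\in G$ uncovered. Here I would fix $g\in G$ and consider its coverers in $C$, a nonempty finite set $\{d_1,\dots,d_k\}$ with $k\leqslant |S|$. If, toward a contradiction, all of $d_1,\dots,d_k$ were eventually removed, I would examine the one deleted last, say at stage $n$: at that moment every other $d_i$ has already been discarded, so $d_j$ is the only coverer of $g$ remaining in $C_{n-1}$, and deleting it would have broken $S(C_{n-1}\setminus\{d_j\})=G$, contradicting the fact that the procedure did delete $d_j$ at stage $n$. Hence at least one coverer of $g$ survives in $C'$ for every $g$, so $SC'=G$, and together with the previous paragraph $C'$ is a minimal complement contained in $C$. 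The only place the finiteness of $S$ enters is in guaranteeing that ``the coverer deleted last'' is well defined, which is precisely why this argument does not extend to arbitrary infinite $W$.
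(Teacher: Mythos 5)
Your proposal is correct and follows essentially the same route as the paper: the identical greedy deletion process $C_0\supseteq C_1\supseteq\cdots$ followed by taking the surviving set, with minimality proved exactly as in the paper. The only cosmetic difference is in verifying that the limit is still a complement: the paper applies the pigeonhole principle to the representations $x=s_{x,i}c_{x,i}$ across all stages, while you argue that the finitely many coverers of $g$ cannot all be deleted because the last one to go would have been blocked --- both arguments use the finiteness of $S$ at the same point and are equally valid.
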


\begin{proof}
For finite groups, the proof of the proposition is clear. Now, let $S$ be a nonempty finite subset of a countably infinite group $G$ and $C$ be a complement of $S$ in $G$. Since $S$ is finite and $G$ is countably infinite, the complement $C$ is countably infinite. Hence the elements of $C$ can be enumerated using the positive integers. Write $C = \{c_i\}_{i=1}^\infty$. Define $C_1$ to be equal to $C$ and for any integer $i\geqslant 1$, define
\begin{equation}
C_{i+1} :=
\begin{cases}
C_i\setminus\{c_i\} & \text{ if $C_i\setminus\{c_i\}$ is a complement to $S$},\\
C_i & \text{ otherwise}. 
\end{cases}
\end{equation}
Since each $C_i$ is a complement to $S$, for each $x\in G$ and any $i\geqslant 1$, there exist elements $c_{x, i}\in C_i, s_{x, i}\in S$ such that $x = s_{x, i}c_{x, i}$. Since $S$ is finite, by the Pigeonhole principle, for some element $s\in S$, the equality $s = s_{x,i}$ holds for infinitely many $i\geqslant 1$. Hence for infinitely many $i$, we obtain $s^\mo x = c_{x, i}$, which is an element of $C_i$. Consequently, for each positive integer $k$, there exists an integer $i_k > k$ such that $s^\mo x$ is an element of $C_{i_k}$, which is contained in $C_k$ (as $i_k > k$) and thus $C_k$ contains $s^\mo x$. Define $M$ to be the intersection $\cap_{i\geqslant 1} C_i$. Then for each $x\in G$, there exists an element $s\in S$ such that $s^\mo x$ belongs to $M$. Hence $M$ is a complement to $S$ in $G$. 

We claim that $M$ is a minimal complement to $S$. On the contrary, assume that $M\setminus \{c_j\}$ is a complement to $S$ for some element $c_j$ in $M$. Since $C_j$ contains $M$, $C_j\setminus \{c_j\}$ is also a complement to $S$. Then $C_{j+1}$ is equal to $C_j\setminus \{c_j\}$. Hence $c_j$ cannot lie in $M$, which is absurd. So $M$ is a minimal complement of $S$ contained in $C$.  
\end{proof}

\begin{proposition}
\label{prop2}
Every complement of a nonempty finite subset of an uncountable group contains a minimal complement. 
\end{proposition}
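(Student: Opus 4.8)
The plan is to invoke Zorn's Lemma on a suitable poset, the finiteness of $S$ entering through a closure property of chains. Fix the complement $C$ of $S$ and let $\mathcal{P}$ be the collection of all complements of $S$ contained in $C$, partially ordered by reverse inclusion, so that $A \preceq B$ means $A \supseteq B$. Note $\mathcal{P}$ is nonempty since $C \in \mathcal{P}$, and a maximal element of $(\mathcal{P}, \preceq)$ is precisely a complement $M \subseteq C$ admitting no proper subset that is a complement (any proper subset of $M$ is automatically contained in $C$), i.e. a minimal complement of $S$ contained in $C$. Thus it suffices to verify the hypothesis of Zorn's Lemma: every chain in $\mathcal{P}$ has an upper bound in $\mathcal{P}$.

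Given a chain $\{C_\alpha\}_{\alpha \in I}$ in $\mathcal{P}$ (totally ordered by inclusion), its intersection $M := \bigcap_{\alpha \in I} C_\alpha$ is the natural candidate for an upper bound, and the only nontrivial point is to check that $M$ is again a complement of $S$. This is where finiteness of $S$ is indispensable. Fix $x \in G$; I claim there exists $s \in S$ with $s^\mo x \in M$, i.e. $s^\mo x \in C_\alpha$ for every $\alpha$. Suppose not. Then for each of the finitely many $s \in S$ there is an index $\alpha_s$ with $s^\mo x \notin C_{\alpha_s}$. As $\{C_{\alpha_s}\}_{s \in S}$ is a finite subfamily of a chain, it has a least element $C_\beta$ (the smallest set among them). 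For every $s \in S$ we have $C_\beta \subseteq C_{\alpha_s}$, so $s^\mo x \in C_\beta$ would force $s^\mo x \in C_{\alpha_s}$, which is false; hence $s^\mo x \notin C_\beta$ for all $s \in S$. This contradicts the fact that $C_\beta$ is a complement of $S$. Therefore some $s \in S$ satisfies $s^\mo x \in M$, and since $x$ was arbitrary, $SM = G$. As $M \subseteq C$, we get $M \in \mathcal{P}$, and $M$ is an upper bound for the chain.

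With the chain condition established, Zorn's Lemma yields a maximal element $M$ of $(\mathcal{P}, \preceq)$, which by the discussion above is a minimal complement of $S$ contained in $C$, proving the proposition. I expect the chain-intersection step to be the crux: it is exactly the place where $|S| < \infty$ is used, via the existence of a least element in a finite subchain, and it is precisely this step that fails for infinite $W$ (cf. the set $\bbZ^{d-1} \times \bbN$ mentioned in the introduction, whose complements need not contain minimal ones). I also note that this Zorn-theoretic argument makes no reference to the cardinality of $G$ and so in fact reproves Proposition \ref{prop1} as well; the enumeration argument given there can be viewed as its concrete, countable incarnation.
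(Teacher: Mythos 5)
Your proof is correct and takes essentially the same route as the paper's: Zorn's Lemma applied to the poset of complements of $S$ contained in $C$, with the finiteness of $S$ used exactly at the step showing that the intersection of a chain is again a complement. Your way of reaching the contradiction there --- taking the least element of the finite subfamily $\{C_{\alpha_s}\}_{s\in S}$ of the chain --- is a cleaner packaging of the paper's partition-of-the-chain argument, but it is the same underlying idea, and your closing observation that the argument is independent of the cardinality of $G$ is also accurate.
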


\begin{proof}
Let $S$ be a finite subset of a group $G$ and $C$ be a complement of $S$ in $G$. Let $\scrC$ denote the set of all complements of $S$ in $G$ which are contained in $C$. Note that $\scrC$ is partially ordered with respect to strict inclusion $\subsetneq$. Note also that the minimal elements of the partially ordered set $(\scrC, \subsetneq)$ are the minimal complements of $S$ in $G$ which are contained in $C$. If every chain in $\scrC$ has a lower bound in $\scrC$, then by Zorn's lemma, it would follow that $\scrC$ has a minimal element. We claim that every chain in $\scrC$ has a lower bound in $\scrC$. Let $\calC = \{C_\lambda\}_{\lambda\in \Lambda}$ be a chain in $\scrC$. If a member of $\calC$ is contained in all other members of $\calC$, then it is a lower bound of $\calC$ in $\scrC$. Henceforth we assume that no member of $\calC$ is a lower bound of $\calC$. 

Note that if $\calC_1, \cdots, \calC_k$ are pairwise disjoint subsets of $\calC$ such that their union is equal to $\calC$, then for some integer $i$ with $1\leqslant i \leqslant k$, each member of $\calC$ contains some member of $\calC_i$. Otherwise, for each $i$, some member $C_{\lambda_i}$ of $\calC$ would be contained in every member of $\calC_i$. Let $j$ be a positive integer $\leqslant k$ such that $C_{\lambda_j}$ is contained in $C_{\lambda_i}$ for any $i$ satisfying $1\leqslant i\leqslant k$. So $C_{\lambda_j}$ is contained in every member of $\calC_i$ for each $i$, which is a contradiction to the assumption that no member of $\calC$ is a lower bound of $\calC$. 

Since $SC_\lambda = G$, for each $x\in G$ and $\lambda\in \Lambda$, there exist elements $s_{x, \lambda}\in S, c_{x,\lambda}\in C_\lambda$ such that $x=s_{x, \lambda} c_{x, \lambda}$. Let $S_x$ denote the subset of elements of $S$ of the form $s_{x, \lambda}$ for some $\lambda\in \Lambda$. For $s\in S_x$, define $\calC_s$ to be the subchain $\{C_\lambda\,|\, s_{x,\lambda} =s\}$ of $\calC = \{C_\lambda\}_{\lambda\in \Lambda}$. Then the sets $\calC_s$, for $s\in S_x$, form a collection of finitely many pairwise disjoint subsets of $\calC$ and their union is equal to $\calC$. Hence by the observation made in the preceding paragraph, there exists an element $s'\in S$ such that each member of $\calC$ contains some element of $\calC_{s'}$. Since $s'^\mo x$ belongs to each member of $\calC_{s'}$, it also belongs to each member of $\calC$ and hence $s'^\mo x$ belongs to the intersection $\cap_{\lambda\in \Lambda} C_\lambda$. Consequently, $\cap_{\lambda\in \Lambda} C_\lambda$ is a complement of $S$ in $G$ contained in $C$. In other words, it is an element of $\scrC$. So each chain in $\scrC$ has a lower bound in $\scrC$. This proves the claim. So the proposition follows from Zorn's lemma.
\end{proof}

\begin{corollary}
 In particular, for finitely generated free abelian groups, e.g., $\mathbb{Z}^k$, every (additive) complement of a finite set contains a minimal complement. 
\end{corollary}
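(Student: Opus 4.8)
The plan is to recognize that this corollary is an immediate specialization of the results already established. A finitely generated free abelian group is isomorphic to $\mathbb{Z}^k$ for some integer $k\geqslant 0$, and such a group is in particular an arbitrary group in the sense of Theorem \ref{theorem1.1}. Since the set in question is nonempty and finite by hypothesis, Theorem \ref{theorem1.1} applies verbatim and yields that every additive complement of a finite set in $\mathbb{Z}^k$ contains a minimal complement. So at the coarsest level the proof is a single sentence: instantiate Theorem \ref{theorem1.1} with $G = \mathbb{Z}^k$.

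First I would record the mild sharpening that one does not even need the full strength of Theorem \ref{theorem1.1} here. The group $\mathbb{Z}^k$ is at most countable for every $k\geqslant 0$ (it is trivial when $k=0$ and countably infinite when $k\geqslant 1$), so it suffices to invoke Proposition \ref{prop1}, which already establishes the conclusion for every at most countable group. In particular the argument for this corollary avoids any appeal to Zorn's lemma; the transfinite machinery of Proposition \ref{prop2} was needed only to treat uncountable ambient groups and plays no role for the lattices $\mathbb{Z}^k$.

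The only step that might warrant a word is the identification of a finitely generated free abelian group with some $\mathbb{Z}^k$, but this is the standard structure theorem for such groups and is already signalled by the phrase ``e.g., $\mathbb{Z}^k$'' in the statement. Consequently there is no genuine obstacle to overcome: the corollary is stated for emphasis, to isolate the case of the integer lattices that motivate the later sections of the paper, and its proof is nothing more than a direct application of Proposition \ref{prop1} (equivalently, of Theorem \ref{theorem1.1}).
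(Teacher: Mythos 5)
Your proposal is correct and matches the paper's (implicit) argument: the corollary is just Theorem \ref{theorem1.1} specialized to $G=\mathbb{Z}^k$, and the paper offers no separate proof. Your added observation that $\mathbb{Z}^k$ is countable, so Proposition \ref{prop1} alone suffices and Zorn's lemma is not needed, is accurate but does not change the route.
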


\begin{remark}
Later on, we shall give specific examples of infinite subsets in certain groups which admit minimal complements. See section \ref{Sec:Conclusion}, Propositions \ref{prop6.1} and \ref{prop6.2}. Also see section \ref{Sec: Minimal complement} for minimal complements of eventually periodic sets.
\end{remark}

\subsection{Minimal nets}Let $(X, \delta)$ be a metric space. For $x\in X$  and $r \geqslant 0$,
the sphere with center $x$ and radius $r$ is the set
$$S_{x}(r) := \{z\in X : \delta(x,z) = r \}.$$
The open ball $B_{x}(r)$ of radius $r$ and center $x$ is the set
$$B_{z}(r) := \{ z\in X: \delta(x,z)<r\}$$
while the closed ball $\bar{B}_{z}(r)$ of radius $r$
and center $x$ is,
$$\bar{B}_{z}(r) = \{ z \in X : \delta(x,z) \leqslant r \} .$$

\begin{definition}[$r$-net]
An $r$-net in $(X, d)$ is a subset $N_{r}$ of $X$ with the property that for each element $x\in X$, there exists an element $z \in N_{r}$ such that $\delta(x, z) \leqslant r$. Equivalently, $N_{r}$ is an $r$-net in
$X$ if and only if
$$X =\bigcup_{z\in N_{r}}\bar{B}_{z}(r).$$
\end{definition}

It is clear that $X$ is the unique $0$-net in $X$. The set $N$ is called a net in $(X, \delta)$ if $N$ is an $r$-net in $(X, \delta)$ for some $r \geqslant 0.$ A minimal $r$-net in a metric space $(X, \delta)$
is an $r$-net $N_{r}$ such that no proper subset of $N_{r}$ is an $r$-net in $(X, \delta)$, e.g., in
every metric space $(X, \delta)$, the set $X$ is a minimal $0$-net. As observed by Nathanson in \cite{NathansonMinimalBaseMax}, there is an analogy between minimal asymptotic bases for sets of integers in additive number theory and minimal $r$-nets in metric geometry. In \cite{NathansonAddNT4}, he posed the following question:
\begin{questionIntro}
\cite[Problem 1]{NathansonAddNT4}
``In which metric spaces do there exist minimal $r$-nets for $r > 0?$ When
does a metric space contain an $r$-net $N_{r}$ for some $r > 0$ such that no subset of $N_{r}$ is a minimal $r$-net?"
\end{questionIntro}

In case of groups, a priori there is no metric but one can extend the notion via the metric induced by the generating set. The connection is given by the following lemma:
\begin{lemma}[{\cite[Lemma 2]{NathansonAddNT4}}]
Let $G$ be a group and let $A$ be a symmetric generating set for $G$ with
$1\in A$. For every nonnegative integer $r$, the set $N_{r}$ is an $r$-net in the metric space $(G, \delta_{A})$ if and only if $G = A^{r}N_{r}.$ The set $N$ is a net if and only if $G = A^{r}N$ for
some nonnegative integer $r.$
\end{lemma}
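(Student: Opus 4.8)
The plan is to prove Nathanson's Lemma 2 by unwinding the definitions of the word metric $\delta_A$ and the $r$-net, and observing that the sumset $A^r N_r$ is precisely the union of closed balls of radius $r$ centered at points of $N_r$. First I would fix notation: recall that for a symmetric generating set $A$ with $1\in A$, the associated word metric is $\delta_A(g,h) = \min\{k\geqslant 0 : g^\mo h\in A^k\}$, the length of the shortest word in $A$ representing $g^\mo h$. Since $1\in A$ and $A=A^\mo$, the powers are nested, $A^0\subseteq A^1\subseteq A^2\subseteq\cdots$, so $\delta_A(g,h)\leqslant r$ holds if and only if $g^\mo h\in A^r$.

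The key step is then the identity $\bar B_z(r) = z A^r$ for each $z\in G$. Indeed, an element $x$ lies in $\bar B_z(r)$ exactly when $\delta_A(z,x)\leqslant r$, which by the previous observation is equivalent to $z^\mo x\in A^r$, i.e. $x\in zA^r$. Using that $A$ is symmetric so $A^r=(A^r)^\mo$, one can equally write this as $x\in A^r z$ after relabeling; I would phrase it consistently with the left/right convention used in the paper (the paper writes $WW'$ with $W$ on the left, so I would use $A^r N_r$ as a left sumset and correspondingly orient the ball identity). With this identity in hand, the defining equation of an $r$-net, $X=\bigcup_{z\in N_r}\bar B_z(r)$, becomes $G=\bigcup_{z\in N_r} A^r z = A^r N_r$, which is exactly the asserted condition.

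For the final sentence of the lemma, the statement that $N$ is a net if and only if $G=A^r N$ for some nonnegative integer $r$ follows immediately by applying the equivalence just established and quantifying existentially over $r$: by definition $N$ is a net precisely when it is an $r$-net for some $r\geqslant 0$, and each such instance translates to the equation $G=A^r N$.

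I do not anticipate a serious obstacle here, since the result is essentially a translation between the metric and algebraic languages; the only point requiring a little care is the direction of multiplication. The subtlety is that $\delta_A(x,z)$ is defined symmetrically as a metric, so one must verify that $\delta_A(z,x)=\delta_A(x,z)$ (which holds because $A$ is symmetric, giving $x^\mo z\in A^k\Leftrightarrow z^\mo x\in A^k$) in order to freely convert between $x\in\bar B_z(r)$ and the membership condition $z^\mo x\in A^r$ that produces the sumset on the correct side. Once that symmetry is noted, the equivalence is a routine rewriting, so I would present the argument compactly as the chain of equivalences above.
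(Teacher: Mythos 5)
The paper itself gives no proof of this lemma---it is quoted verbatim from Nathanson's paper---so your proposal can only be judged on its own terms. Your overall strategy is the right one, and essentially the only one: since $1\in A$ the powers are nested, $A^0\subseteq A^1\subseteq\cdots$, so word length $\leqslant r$ is equivalent to membership in $A^r$; closed balls of radius $r$ are translates of $A^r$; and the defining identity $G=\bigcup_{z\in N_{r}}\bar{B}_{z}(r)$ of an $r$-net then becomes a sumset equation, with the final sentence of the lemma following by quantifying existentially over $r$.

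However, one step is genuinely wrong: the claim that, because $A^r=(A^r)^{-1}$, the condition $x\in zA^r$ ``can equally be written as $x\in A^rz$ after relabeling.'' Symmetry of $A$ gives $z^{-1}x\in A^r\Leftrightarrow x^{-1}z\in A^r$, i.e.\ $x\in zA^r\Leftrightarrow z\in xA^r$; it does \emph{not} interchange left and right translates, and $zA^r\neq A^rz$ in general. The lemma is stated for an arbitrary group $G$, and the paper applies it to arbitrary finitely generated groups, so commutativity cannot be assumed. Concretely: with your left-invariant convention $\delta_A(g,h)=\min\{k\geqslant 0: g^{-1}h\in A^k\}$, the ball is $\bar{B}_{z}(r)=zA^r$, and the equivalence you actually obtain is $G=N_{r}A^{r}$, which is not the stated conclusion. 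The correct repair is not a relabeling but a different choice of metric: the word metric here is the right-invariant one, $\delta_A(g,h)=\ell_A(gh^{-1})$, for which $x\in\bar{B}_{z}(r)\Leftrightarrow xz^{-1}\in A^r\Leftrightarrow x\in A^rz$, so that $\bigcup_{z\in N_{r}}\bar{B}_{z}(r)=A^{r}N_{r}$ on the nose. (The symmetry $\delta_A(x,z)=\delta_A(z,x)$ that you invoke at the end holds under either convention, since $A=A^{-1}$ gives $\ell_A(g)=\ell_A(g^{-1})$; it is irrelevant to the left/right translate issue.) Once the metric is set up right-invariantly, your chain of equivalences is correct as written.
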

Here $\delta_{A}$ denotes the metric induced from the word length function with respect to the generating set $A$ in $G$. As a corollary of Theorem \ref{theorem1.1}, we deduce

\begin{corollary}
Let $G$ be a finitely generated group. Then a minimal $r$-net always exist for every $r\geqslant 0$.
\end{corollary}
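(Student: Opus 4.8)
The plan is to deduce this directly from Theorem \ref{theorem1.1} via the correspondence lemma recalled just above, so the argument is short. First I would reduce to the case where $r$ is a nonnegative integer: once $A$ is a generating set, the induced word metric $\delta_A$ takes only nonnegative integer values, so the closed ball $\bar{B}_{z}(r)$ coincides with $\bar{B}_{z}(\lfloor r\rfloor)$, and hence an $r$-net is the same thing as an $\lfloor r\rfloor$-net. Thus it suffices to treat integer $r\geqslant 0$.

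Next I would fix a finite generating set of $G$ and enlarge it to a symmetric finite generating set $A$ containing the identity $1$; this is always possible since $G$ is finitely generated. Set $S:=A^{r}$. Because $A$ is finite, $S$ is a nonempty finite subset of $G$ (nonempty as $1\in A$). By Nathanson's lemma recalled above, a subset $N_{r}\subseteq G$ is an $r$-net in $(G,\delta_{A})$ if and only if $G=A^{r}N_{r}=SN_{r}$, i.e.\ if and only if $N_{r}$ is a complement of the finite set $S$. Consequently, a subset of $G$ is a \emph{minimal} $r$-net precisely when it is a \emph{minimal complement} of $S$: in both formulations one requires that the defining equation hold for $N_{r}$ but fail for every proper subset, and the dictionary $N_{r}\leftrightarrow N_{r}$ transports one condition to the other verbatim.

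It then remains to produce a minimal complement of the finite set $S=A^{r}$. Taking $C=G$, which is a complement of $S$ since $SG=G$, Theorem \ref{theorem1.1} guarantees that $G$ contains a minimal complement of $S$; translating back through the correspondence yields a minimal $r$-net, as desired. The case $r=0$ is immediate and already noted in the text: here $A^{0}=\{1\}$, the only complement of $\{1\}$ is $G$ itself, and $G$ is the unique (hence minimal) $0$-net. I do not anticipate any genuine obstacle, since all the substantive content is contained in Theorem \ref{theorem1.1}; the only points requiring a little care are the reduction to integer radius and the verification that minimality is preserved under the net--complement dictionary.
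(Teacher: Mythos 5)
Your proposal is correct and follows essentially the same route as the paper: translate minimal $r$-nets into minimal complements of the finite set $A^{r}$ via Nathanson's lemma and then invoke Theorem \ref{theorem1.1}. Your additional care about reducing to integer radius and ensuring $1\in A$ (so that the lemma applies) only tightens the same argument.
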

\begin{proof}
Let $G$ be generated by the set $S$ with $|S|< +\infty$. Let $A = S\cup S^{-1}$. Then $|A|< +\infty$ and $A$ is a symmetric generating set of $G$. Fix $r\geqslant 0$, let $N_{r}$ be an $r$-net. Then $G=A^{r}N_{r}$, in particular, $N_{r}$ is a complement of $A^{r}$ in $G$. By Theorem \ref{theorem1.1}, every complement $A^{r}$ has a minimal complement. We are done.
\end{proof}

\section{Inexistence of minimal complements for certain infinite sets}

In the following, $d$ denotes a positive integer and $\bbN$ denotes the set of all nonnegative integers.

\begin{proposition}
\label{prop3.1}
\label{Prop: Complement iff}
Let $M$ be a subset of $\bbZ^d$. Then the following statements are equivalent. 
\begin{enumerate}
\item The set $M$ is a complement of $\bbN^d$ in $\bbZ^d$. 
\item $M$ contains a sequence of elements of $\bbZ^d$ such that the maximum of their coordinates is arbitrarily small negative number.
\item  $\liminf  _{ (x_1, \cdots, x_d)\in M} \max\{x_1, \cdots, x_d\}=-\infty.$
\end{enumerate}
\end{proposition}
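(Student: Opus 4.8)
The plan is to prove the cycle of implications through a single coordinatewise reformulation of complementhood. For $x,y\in\bbZ^d$ write $x\le y$ to mean $x_i\le y_i$ for every coordinate $i$. Since $\bbN^d=\{n\in\bbZ^d: n\ge 0\}$, the defining equation $\bbN^d+M=\bbZ^d$ is equivalent to the assertion that for every $z\in\bbZ^d$ there is some $m\in M$ with $z-m\in\bbN^d$, that is, with $m\le z$. So my first step is to record this equivalence: \emph{$M$ is a complement of $\bbN^d$ in $\bbZ^d$ if and only if every $z\in\bbZ^d$ dominates (coordinatewise) some element of $M$.} Once this is in hand, the three conditions become easy to compare.

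For (1) $\Rightarrow$ (2), I would apply the criterion above to the diagonal points $z^{(k)}=(-k,\dots,-k)$ for $k\ge 1$. Each $z^{(k)}$ must dominate some $m^{(k)}\in M$, so $m^{(k)}_i\le -k$ for every $i$, whence $\max\{m^{(k)}_1,\dots,m^{(k)}_d\}\le -k$. Letting $k\to\infty$ produces a sequence in $M$ whose coordinatewise maxima tend to $-\infty$, which is precisely (2). For (2) $\Rightarrow$ (1), given an arbitrary $z\in\bbZ^d$ I would set $t:=\min\{z_1,\dots,z_d\}$ and use (2) to pick $m\in M$ with $\max\{m_1,\dots,m_d\}\le t$. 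Then $m_i\le t\le z_i$ for all $i$, so $m\le z$ and $z\in\bbN^d+M$ by the reformulation; since $z$ was arbitrary, (1) follows.

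Finally, (2) $\Leftrightarrow$ (3) is just the translation of ``$M$ contains elements whose maximal coordinate is arbitrarily small negative'' into $\liminf$ notation, as both statements assert that $\inf_{(x_1,\dots,x_d)\in M}\max\{x_1,\dots,x_d\}=-\infty$.

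The argument is short and there is no genuinely hard step; the only point requiring care is the reduction in (1) $\Rightarrow$ (2) to the diagonal points, together with its converse. The underlying observation is that the sets $\{z:z\le(-k,\dots,-k)\}$ are coinitial (in the coordinatewise order) among the regions one must be able to dominate, so testing complementhood against the diagonal alone already forces elements of $M$ with arbitrarily negative maximal coordinate; conversely, dominating a general $z$ requires only an element of $M$ lying below the \emph{smallest} coordinate of $z$, which is exactly what makes condition (2) sufficient.
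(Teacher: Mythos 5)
Your proof is correct and follows essentially the same route as the paper's: the coordinatewise-domination reformulation is exactly what underlies the paper's argument, your (1) $\Rightarrow$ (2) via the diagonal points $(-k,\dots,-k)$ is the direct form of the paper's contrapositive argument with the sets $n-\bbN^d$, and your (2) $\Rightarrow$ (1) via $t=\min\{z_1,\dots,z_d\}$ matches the paper's construction of $P_n+\bbN^d\supseteq (n,\dots,n)+\bbN^d$. No gaps.
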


\begin{proof}
Suppose statement (1) holds. We need to show that given any integer $n\in \bbZ$, $M$ contains a point all whose coordinates are less than or equal to $n$. On the contrary, suppose this is not true. Then for some $n\in \bbZ$, $M$ is contained in $\bbZ^d \setminus(n-\bbN^d)$. So $M+ \bbN^d$ would also be contained in $\bbZ^d \setminus(n-\bbN^d)$, which is absurd. So the second statement holds. 

It is clear that statement (3) follows from statement (2). 

Suppose statement (3) holds. Then for each integer $n$, the set $M$ contains a point $P_n=(x_{n1}, \cdots, x_{nd})$ with each coordinate $\leqslant n$, i.e., $x_{ni}\leqslant n$ for any $1\leqslant i\leqslant d$. So $\bbN^d$ contains the point $(n-x_{n1}, \cdots, n-x_{nd})$. Hence $\bbN^d$ contains $(n-x_{n1}, \cdots, n-x_{nd}) + \bbN^d$, and thus
$$
P_n + \bbN^d \supseteq P_n + ((n-x_{n1}, \cdots, n-x_{nd}) + \bbN^d)
= 
(n, \cdots, n) + \bbN^d .
$$
Consequently, $M+\bbN^d$ contains $\cup_{n\in \bbZ} ((n, \cdots, n) + \bbN^d) = \bbZ^d$. This proves statement (1). 
\end{proof}

\begin{corollary}
\label{cor3.2}
\label{Cor: Nd minimal complement}
\quad 
\begin{enumerate}
\item The subset $\bbN^d$ has complements in $\bbZ^d$. 
\item The subset $\bbN^d$ has no minimal complement in $\bbZ^d$.
\item No complement of $\bbN^d$ in $\bbZ^d$ contains a minimal complement of $\bbN^d$.
\end{enumerate}
\end{corollary}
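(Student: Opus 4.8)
The plan is to derive all three parts from Proposition \ref{Prop: Complement iff}, which characterizes complements of $\bbN^d$ via the condition $\liminf_{(x_1,\dots,x_d)\in M}\max\{x_1,\dots,x_d\}=-\infty$. Part (1) is immediate: the set $M=\{(-n,\dots,-n)\mid n\in\bbN\}$ (the ``diagonal ray'' going to $-\infty$) clearly satisfies condition (3), hence is a complement; alternatively $\bbZ^d$ itself works. The real content is in parts (2) and (3), and since (3) says that \emph{every} complement fails to contain a minimal complement, establishing (3) will give (2) as a special case (taking the complement to be $\bbZ^d$, or any complement at all). So I would state (1) quickly and then concentrate on proving (3).

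For part (3), let $C$ be any complement of $\bbN^d$ and suppose toward a contradiction that $C$ contains a minimal complement $M\subseteq C$. The strategy is to show that no complement can be minimal, by exhibiting, for an arbitrary complement $M$ and an arbitrary point $P\in M$, a proof that $M\setminus\{P\}$ is still a complement. By Proposition \ref{Prop: Complement iff}, $M$ being a complement means it contains points whose coordinate-maxima tend to $-\infty$. The key observation is that removing a single point $P$ cannot destroy this property: the sequence witnessing $\liminf=-\infty$ is infinite, so it still contains infinitely many points after deleting $P$, and in particular the coordinate-maxima of $M\setminus\{P\}$ still tend to $-\infty$. Hence $M\setminus\{P\}$ again satisfies condition (3) and is therefore a complement by Proposition \ref{Prop: Complement iff}. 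Since $P\in M$ was arbitrary, no proper subset obstruction arises, i.e.\ $M$ is not minimal. As $M\subseteq C$ was an arbitrary complement contained in $C$, no such $M$ can be minimal, proving (3).

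The one point requiring a little care is ensuring that deleting $P$ genuinely leaves a sequence with $\liminf=-\infty$ rather than merely an infinite set. Concretely, for each integer $n$, condition (2) of Proposition \ref{Prop: Complement iff} gives a point of $M$ all of whose coordinates are $\leqslant n$; at most one of these witnesses equals $P$, so for every $n$ we still find such a witness in $M\setminus\{P\}$ (for instance by passing to $n-1$, or simply noting one removed point cannot spoil an unbounded-below family). This confirms $M\setminus\{P\}$ meets criterion (3), completing the argument. Part (2) then follows as the instance $C=\bbZ^d$ (or any complement furnished by part (1)): the chosen complement contains no minimal complement, so in particular $\bbN^d$ has no minimal complement at all.

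I expect the main obstacle to be purely expository rather than mathematical: the substance is the single-point-removal observation, and the only subtlety is phrasing it so that the $\liminf$ condition is visibly preserved. There is no need for Zorn's lemma or any transfinite machinery here, in contrast to the finite-set case treated in Propositions \ref{prop1} and \ref{prop2}; the entire difficulty of $\bbN^d$ is that it is ``too thick'' near $-\infty$, and Proposition \ref{Prop: Complement iff} packages exactly this into a removable-point criterion.
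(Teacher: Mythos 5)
Your proposal is correct and follows essentially the same route as the paper: both derive all three parts from Proposition \ref{Prop: Complement iff}, using the diagonal ray $\{(-n,\dots,-n)\mid n\in\bbN\}$ for part (1) and the observation that deleting a point (the paper deletes any finite set) from a complement preserves the $\liminf$ criterion, so no complement is minimal. The single-point-removal detail you flag is handled exactly as you describe, and no further machinery is needed.
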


\begin{proof}
The first statement follows since $\{(-n,\cdots,-n)\,|\,n\in \bbN \}$ and any of its infinite subsets are complements of $\bbN^d$ in $\bbZ^d$ (which can be seen by applying Proposition \ref{Prop: Complement iff}). For any complement $M$ of $\bbN^d$ and for any finite subset $F$ of $M$, it follows from Proposition \ref{Prop: Complement iff} that $M\setminus F$ is also a complement of $\bbN^d$. Hence the second and the third statement hold.
\end{proof}

We strengthen Proposition \ref{Prop: Complement iff} and Corollary \ref{Cor: Nd minimal complement} in Proposition \ref{Prop: AP Complement iff} below.

\begin{proposition}
\label{prop3.3}
\label{Prop: AP Complement iff}
Let $u_1, \cdots, u_d$ be elements of $\bbZ^d$ which satisfy no nontrivial $\bbZ$-linear relation. Denote the subgroup $\bbZ u_1 + \cdots + \bbZ u_d$ of $\bbZ^d$ by $\calL$. 
\begin{enumerate}
\item A subset $M$ of $\bbZ^d$ is a complement of $\bbN u_1 + \cdots + \bbN u_d$ in $\bbZ^d$ if and only if each fibre of the map $\pi: M \to \bbZ^d/\calL$ contains a sequence of elements of $\bbZ^d$ such that the maximum of their coordinates with respect to $u_1, \cdots, u_d$ is arbitrarily small negative number.
\item The subset $\bbN u_1 + \cdots + \bbN u_d$ in $\bbZ^d$ has complements in $\bbZ^d$. 
\item The subset $\bbN u_1 + \cdots + \bbN u_d$ in $\bbZ^d$ has no minimal complement in $\bbZ^d$.
\item No complement of $\bbN u_1 + \cdots + \bbN u_d$ in $\bbZ^d$ contains a minimal complement of $\bbN u_1 + \cdots + \bbN u_d$ in $\bbZ^d$.
\end{enumerate}
\end{proposition}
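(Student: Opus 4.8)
The plan is to reduce everything to the already-settled case of $\bbN^d$ (Proposition \ref{Prop: Complement iff}) by decomposing $\bbZ^d$ into cosets of $\calL$. First I would record the structural input: since $u_1, \cdots, u_d$ satisfy no nontrivial $\bbZ$-linear relation, they are linearly independent over $\mathbb{Q}$ (clear denominators), hence form a $\mathbb{Q}$-basis of $\mathbb{Q}^d$; consequently every $x\in \bbZ^d$ has unique coordinates $(x_1, \cdots, x_d)\in \mathbb{Q}^d$ with $x = \sum_i x_i u_i$ (these are the coordinates with respect to $u_1, \cdots, u_d$), and they are integral exactly when $x\in \calL$. In particular $\calL$ has finite index in $\bbZ^d$, and the coordinate map identifies $\calL \cong \bbZ^d$ carrying $\calS := \bbN u_1 + \cdots + \bbN u_d$ onto $\bbN^d$. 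The crux of the whole proposition is the observation that $\calS \subseteq \calL$, so that $x + \calS$ lies in the single coset $\pi(x)$ for every $x$; therefore the equation $M + \calS = \bbZ^d$ decouples over the cosets of $\calL$, i.e.\ it holds if and only if for each coset $\bar c\in \bbZ^d/\calL$ the fibre $M_{\bar c} := M\cap \pi^{-1}(\bar c)$ satisfies $M_{\bar c} + \calS = \pi^{-1}(\bar c)$.

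For statement (1) I would fix a coset $\bar c$ with a representative $c$ and write each element of the fibre as $c + \ell$ with $\ell\in \calL$. Translating by $-c$, the condition $M_{\bar c} + \calS = \pi^{-1}(\bar c)$ becomes $\{\ell\} + \calS = \calL$, which under the identification $\calL \cong \bbZ^d$ (sending $\calS$ to $\bbN^d$) is precisely the statement that the integer coordinate vectors of the $\ell$'s form a complement of $\bbN^d$ in $\bbZ^d$. By the equivalence of (1) and (2) in Proposition \ref{Prop: Complement iff}, this holds if and only if these vectors contain a sequence whose maximal coordinate tends to $-\infty$. Since passing from $\ell$ to $x = c+\ell$ shifts each $u$-coordinate by the fixed (rational) coordinate of $c$, the maximal coordinate changes only by a bounded amount, so the two conditions are equivalent; thus the fibre $M_{\bar c}$ itself contains a sequence of elements whose maximal coordinate with respect to $u_1, \cdots, u_d$ is an arbitrarily small negative number. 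Taking the conjunction over the finitely many cosets yields statement (1).

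Statement (2) then follows by exhibiting a complement directly: choosing coset representatives $c_1, \cdots, c_k$ for $\bbZ^d/\calL$ and setting $M = \{\, c_j - n(u_1 + \cdots + u_d) \mid 1\le j\le k,\ n\in \bbN \,\}$, each fibre contains a sequence with maximal coordinate tending to $-\infty$, so $M$ is a complement by (1). For statements (3) and (4) the single mechanism is that deleting a finite set preserves the complement property: if $M$ is any complement and $F\subseteq M$ is finite, then in each fibre the witnessing sequence loses only finitely many terms and still has maximal coordinate tending to $-\infty$, so $M\setminus F$ is again a complement by (1), exactly as in the proof of Corollary \ref{Cor: Nd minimal complement}. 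Applying this with $F$ a single point shows that no complement can be minimal, giving (3); and since a minimal complement would be a complement containing itself, the same fact rules out any complement containing a minimal complement, giving (4). I expect the only genuine obstacle to be the bookkeeping in the coset reduction --- keeping track of the rational coordinates introduced by the representatives $c$ and checking that the finite-index lattice $\calL$ interacts cleanly with the maximal-coordinate condition --- after which every assertion is inherited from the $\bbN^d$ case of Proposition \ref{Prop: Complement iff} and Corollary \ref{Cor: Nd minimal complement}.
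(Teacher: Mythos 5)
Your proposal is correct and follows essentially the same route as the paper: both arguments hinge on the observation that $\bbN u_1 + \cdots + \bbN u_d \subseteq \calL$, so the complement condition decouples over the cosets of $\calL$, both exhibit $\frakL + (-\bbN(u_1+\cdots+u_d))$ as a complement for (2), and both deduce (3) and (4) from the fact that deleting a finite set from a complement preserves the complement property. The only cosmetic difference is that you transport each fibre to the $\bbN^d$ case of Proposition \ref{Prop: Complement iff} via the identification $\calL \cong \bbZ^d$ (checking that translating by a coset representative shifts the $u$-coordinates by a bounded amount), whereas the paper simply re-runs the argument of Proposition \ref{Prop: Complement iff} inside each coset.
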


\begin{proof}
Let $\frakL$ be a subset of $\bbZ^d$ such that the map $\pi: \frakL \to \bbZ^d/\calL$ is bijective. For any subset $X$ of $\bbZ^d$ and any $\frakl\in \frakL$, let $X_\frakl$ denote the set of all elements of $X$ which are congruent to $\frakl\modu \calL$. Since each element of $\bbN u_1 + \cdots + \bbN u_d$ is congruent to zero modulo $\calL$, it follows that a subset $M$ of $\bbZ^d$ is a complement of $\bbN u_1 + \cdots + \bbN u_d$ in $\bbZ^d$ if and only if $M_\frakl + (\bbN u_1 + \cdots + \bbN u_d) = \bbZ^d_\frakl$ for any $\frakl\in \frakL$. 

Choose an element $\frakl\in \frakL$. Suppose $M_\frakl + (\bbN u_1 + \cdots + \bbN u_d) = \bbZ^d_\frakl$ holds. We claim that $M_\frakl$ contains a sequence of elements of $\bbZ^d_\frakl$ such that the maximum of their coordinates with respect to $u_1, \cdots, u_d$ is arbitrarily small negative number. On the contrary, suppose this is false. Then for some $n\in \bbZ$, 
$$M_\frakl \subseteq \bbZ^d_\frakl \setminus(\frakl + (n(u_1 + \cdots + u_n))-(\bbN u_1 + \cdots + \bbN u_d)).$$
So $M_\frakl+ ( \bbN u_1 + \cdots + \bbN u_d)$ would also be contained in $\bbZ^d_\frakl \setminus(\frakl + (n(u_1 + \cdots + u_n))-(\bbN u_1 + \cdots + \bbN u_d))$, which is absurd. So the claim follows. Conversely, assume that $M_\frakl$ contains a sequence of elements of $\bbZ^d_\frakl$ such that the maximum of their coordinates with respect to $u_1, \cdots, u_d$ is arbitrarily small negative number. So for any $n\in \bbZ$, the set $M_\frakl$ contains a point $P_n=\frakl + x_{n1}u_1 + \cdots +  x_{nd}u_d$ with $x_{ni} \leqslant n$ for any $1\leqslant i\leqslant d$. So $\bbN u_1 + \cdots + \bbN u_d$ contains the point $(n-x_{n1})u_1 + \cdots + ( n-x_{nd})u_d$. Hence $\bbN u_1 + \cdots + \bbN u_d$ contains $((n-x_{n1})u_1 + \cdots + ( n-x_{nd})u_d) + (\bbN u_1 + \cdots + \bbN u_d)$, and thus
\begin{align*}
P_n + \bbN u_1 + \cdots + \bbN u_d 
&\supseteq P_n + (((n-x_{n1})u_1 + \cdots + ( n-x_{nd})u_d)\\
& \quad + (\bbN u_1 + \cdots + \bbN u_d))\\
&=\frakl + (n(u_1 + \cdots + u_n)) + (\bbN u_1 + \cdots + \bbN u_d) .
\end{align*}
Consequently, $M_\frakl+ (\bbN u_1 + \cdots + \bbN u_d)$ contains $\cup_{n\in \bbZ} (\frakl + (n(u_1 + \cdots + u_n)) + (\bbN u_1 + \cdots + \bbN u_d) ) = \bbZ^d_\frakl$. This proves statement (1). 

From the first statement, it follows that the set $\frakL + (-\bbN(u_1+\cdots + u_d))$ and any of its subsets $C$ containing infinitely many elements of $\frakl + (-\bbN(u_1+\cdots + u_d))$ for any $\frakl\in \frakL$ is a complement to $\bbN u_1 + \cdots + \bbN u_d$. So the second statement holds. 

For any complement $M$ of $\bbN u_1 + \cdots + \bbN u_d$ in $\bbZ^d$ and any finite subset $F$ of $M$, it follows from the first statement that $M\setminus F$ is also a complement to $\bbN u_1 + \cdots + \bbN u_d$. This proves the third and the fourth statement. \end{proof}

\section{Minimal complements of infinite sets in free abelian groups}
\label{Sec: Minimal complement}
We start the section with the statement on existence of minimal complements of subgroups of arbitrary groups as stated in the introduction.

\begin{proposition}
\label{prop4.1}
Let $G$ be an arbitrary group with $S$ a subgroup of $G$. Then every complement of $S$ in $G$ has a minimal complement.
\end{proposition}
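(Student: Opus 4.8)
The plan is to exploit the fact that, because $S$ is a subgroup, the product $SC$ decomposes into \emph{right cosets} of $S$. Concretely, for any set $C$ one has $SC=\bigcup_{c\in C}Sc$, and each $Sc$ is a single right coset of $S$ in $G$. Hence the relation $SC=G$ is equivalent to the statement that the cosets $\{Sc : c\in C\}$ exhaust $G$, i.e.\ that $C$ meets every right coset of $S$. Indeed, if $g=sc$ with $s\in S$, $c\in C$, then $c=s^{-1}g$ lies in $Sg$, so the coset of $g$ already contains an element of $C$; conversely, if $C$ meets every coset then clearly $SC=G$. I would record this reformulation first, since it converts the problem into a purely combinatorial statement about a partition of $G$.

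The same reformulation also identifies the minimal complements. If $C'$ is a complement and $c'\in C'$, then $S(C'\setminus\{c'\})$ fails to equal $G$ precisely when $c'$ is the \emph{only} element of $C'$ lying in the coset $Sc'$, because removing $c'$ deletes $Sc'$ from the union $\bigcup_{c\in C'}Sc$ and no other element of $C'$ restores it. Thus a complement is minimal if and only if it meets every right coset of $S$ \emph{exactly once}; that is, the minimal complements of $S$ are exactly the right transversals of $S$ in $G$.

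With this in hand the construction is immediate. Given a complement $C$ of $S$, every right coset $R$ of $S$ satisfies $C\cap R\neq\emptyset$ by the first paragraph. Using the axiom of choice, I would select one element $c_R\in C\cap R$ for each right coset $R$ and set $C'=\{c_R : R\in S\backslash G\}$. Since the cosets are pairwise disjoint, $C'$ contains exactly one point in each coset, so $SC'=\bigcup_R Sc_R=\bigcup_R R=G$, whence $C'$ is a complement; and the ``exactly once'' criterion of the second paragraph shows it is minimal. As $C'\subseteq C$, this exhibits a minimal complement inside the arbitrary given complement $C$.

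There is essentially no analytic obstacle here: unlike the finite-set case (Propositions \ref{prop1} and \ref{prop2}), the subgroup hypothesis turns the product into a disjoint union of cosets, so no Pigeonhole/Zorn argument on approximate covers is needed. The only point requiring care is that $G$ may be uncountable, so the passage to a transversal must be justified by the axiom of choice (equivalently, by Zorn's lemma applied to the poset of subsets of $C$ that meet every coset), rather than by an explicit enumeration as in the countable case.
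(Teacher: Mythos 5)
Your proof is correct and is essentially the paper's own argument: the paper's condition ``$m_1m_2^{-1}\notin S$ for distinct $m_1,m_2\in M$ and $m_cc^{-1}\in S$ for some $m_c\in M$ for each $c\in C$'' is precisely your selection of one element of $C$ from each right coset of $S$, i.e.\ a transversal chosen inside $C$. You merely make explicit the coset reformulation and the appeal to the axiom of choice that the paper leaves implicit.
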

\begin{proof}
Let $C$ be a complement of $S$ in $G$. Let $M$ be a subset of $C$ such that $m_1m_2^\mo\notin S$ for any two distinct elements $m_1, m_2$ of $M$, and for any $c\in C$, $S$ contains $m_c c^\mo$ for some $m_c\in M$. Then $M$ is a minimal complement to $S$ contained in $C$.
\end{proof}

With the subgroup case done, we shall study minimal complements of sets which are not subgroups.

\begin{lemma}
\label{Lemma: coset one}
Let $S$ be a subset of a group $G$. Suppose there exists a subgroup $H$ of $G$ such that the composite map $S \to G \to G/H$ is surjective and at least one fibre of this composite map contains exactly one element. Then $H$ is a minimal complement of $S$ in $G$. 
\end{lemma}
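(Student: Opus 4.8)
The plan is to show that $H$ is a complement of $S$ and that it is minimal, exploiting the hypothesis that one fibre of the composite map $\phi\colon S \to G/H$ is a singleton. Throughout, write $\bar g$ for the image of $g\in G$ in $G/H$. First I would verify that $SH = G$. Given any $g\in G$, surjectivity of $\phi$ gives some $s\in S$ with $\bar s = \bar g$, i.e. $s^{-1}g \in H$, hence $g = s(s^{-1}g) \in SH$. Thus $H$ is a complement of $S$.

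Next I would prove minimality, i.e. that for every $h_0\in H$ the set $H\setminus\{h_0\}$ fails to be a complement, equivalently $S(H\setminus\{h_0\}) \subsetneq G$. Let $s_0\in S$ be the unique element in the singleton fibre, so $\bar s_0$ is attained by no other element of $S$; that is, if $s\in S$ and $\bar s = \bar s_0$ then $s = s_0$. The natural candidate for an element of $G$ not covered by $S(H\setminus\{h_0\})$ is $g_0 := s_0 h_0$. I would argue by contradiction: suppose $g_0 = s h$ for some $s\in S$ and $h\in H\setminus\{h_0\}$. Passing to $G/H$ gives $\bar s = \overline{s_0 h_0} = \bar s_0$, so by the singleton hypothesis $s = s_0$. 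Cancelling $s_0$ in $s_0 h_0 = s_0 h$ yields $h = h_0$, contradicting $h\in H\setminus\{h_0\}$. Hence $g_0 \notin S(H\setminus\{h_0\})$, so $H\setminus\{h_0\}$ is not a complement of $S$.

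Since $h_0\in H$ was arbitrary, no proper subset obtained by deleting a single element is a complement, which is exactly the minimality condition in the Definition. Combining this with the first step, $H$ is a minimal complement of $S$ in $G$.

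The argument is essentially formal once the right witness $g_0 = s_0 h_0$ is identified; the only subtle point, and the step I would be most careful about, is the correct use of the singleton-fibre hypothesis. One must confirm that it forces $s = s_0$ rather than merely $\bar s = \bar s_0$, which is precisely where the uniqueness in the fibre over $\bar s_0$ is invoked — and note that the deletion argument works uniformly for \emph{every} $h_0\in H$ because the witness $s_0$ lives in the single special fibre independent of the choice of $h_0$, so no genuine obstacle remains beyond bookkeeping.
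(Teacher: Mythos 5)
Your proof is correct and follows the same route as the paper: the paper's (very terse) proof likewise rests on the equivalence between $SH=G$ and surjectivity of $S\to G/H$, plus the observation that the singleton fibre forces every element $s_0h_0$ to be uniquely represented, so no element of $H$ can be deleted. You have simply written out in full the details the paper leaves implicit.
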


In the above, $G/H$ denotes the set of equivalence classes under the equivalence relation $\sim$ defined on $G$ by $g_1\sim g_2$ if $g_1^\mo g_2\in H$. 

\begin{proof}
Note that $H$ is a complement of $S$ in $G$ if and only if the composite map $S \to G \to G/H$ is surjective. If at least one fibre of the surjective map $S \to G \to G/H$ contains exactly one element, then no nonempty proper subset of $H$ could be a complement to $S$. So the Lemma follows.
\end{proof}

Next, we will consider free abelian groups of finite rank. For notational convenience, we will consider the free abelian group $\bbZ^d$ only. Note that a condition holds for sufficiently large elements of $\bbZ^d$ if and only if the condition holds for almost all elements of $\bbZ^d$. This enables us to use the terms ``sufficiently large'' and ``almost all'' interchangeably without any confusion. However, to refer to sufficiently large elements, the underlying space is certainly required to have a notion of a metric (which in general is not available in an arbitrary free abelian group). 

We will prove Theorems \ref{Thm: existence of min complement implies}, \ref{Thm: Implies existence of min comple} providing a necessary and a sufficient condition for the existence of minimal complements for eventually periodic sets. We also provide a necessary and sufficient condition for certain eventually periodic sets to have a minimal complement. See Theorem \ref{Thm: Even peri 1 has min comp iff}.

\subsection{Periodic and eventually periodic sets}

The union of two subsets $A, B$ of $\bbZ^d$ is denoted by $A\cup B$. When $A, B$ are disjoint, we write $A\sqcup B$ to denote the union $A\cup B$ and to indicate that $A, B$ are disjoint. 

In the following, $u_1, \cdots, u_d$ denote elements of $\bbZ^d$ which satisfy no nontrivial $\bbZ$-linear relation. Let $\calL$ denote the subgroup $\bbZ u_1 + \cdots +  \bbZ u_d$ of $\bbZ^d$. Two elements $x,y\in \bbZ^d$ are said to be \textit{equivalent modulo $\calL$} if $x-y\in \calL$. Denote by $\pi$ the quotient map $\pi: \bbZ^d \twoheadrightarrow \bbZ^d/\calL$.  The image of an element $v\in \bbZ^d$ under the quotient map $\pi: \bbZ^d \twoheadrightarrow \bbZ^d/\calL$ is denoted by $\bar v$. A typical element of $\bbZ^d/\calL$ would be denoted by $\bar v$ (which is legitimate since the quotient map $\pi: \bbZ^d \twoheadrightarrow \bbZ^d/\calL$ is surjective), and by $v$, we would denote a lift of $\bar v$ to $\bbZ^d$. For any subset $X$ of $\bbZ^d$ and any $\bar v\in \bbZ^d/\calL$, denote by $X_{\bar v}$ the intersection $X\cap \pi^\mo (\bar v)$. For any subset $X$ of $\bbZ^d$, the restriction of the map $\pi$ to $X$ is the composite map $X\hra \bbZ^d \xra{\pi} \bbZ^d/\calL$, which we denote by $\pi_X$ (or, simply by $\pi$ to avoid cumbersome notation). 

\begin{definition}
\label{Defn: periodic}
A nonempty subset $X$ of $\bbZ^d$ is called \textnormal{periodic with periods} $u_1,\cdots, u_d$ if $X$ is contained in $F + \bbN u_1 + \cdots + \bbN u_d$ for some nonempty finite subset $F\subset \bbZ^d$ and $x + \bbN u_1 + \cdots + \bbN u_d$ is contained in $X$ for any $x\in X$. A nonempty subset $X$ of $\bbZ^d$ is called \textnormal{eventually periodic with periods} $u_1,\cdots, u_d$ if $X$ is contained in $F+ \bbN u_1 + \cdots + \bbN u_d$ for some nonempty finite subset $F\subset \bbZ^d$ and $x + \bbN u_1 + \cdots + \bbN u_d$ is contained in $X$ for any sufficiently large element $x\in X$. 
\end{definition}

\begin{example}
For any integer $k\geqslant 0$, the set $\{(r, -r)\in \bbZ^2\,|\, -k\leqslant r\leqslant k\} + \bbN^2$ is a periodic subset of $\bbZ^2$ with periods $(1,0), (0,1)$. The set $\bbN u_1 + \cdots + \bbN u_d$ appearing in Proposition \ref{Prop: AP Complement iff} is periodic and also eventually periodic. Note that the periodic subsets of $\bbZ^d$ are also eventually periodic with the same periods. The set $\bbZ^d$ is neither periodic nor eventually periodic.  
\end{example}

First we prove Proposition \ref{Prop: Structure of Periodic} and Theorem \ref{Thm: Structure of Eventually Periodic}, which describe the structure of periodic sets and eventually periodic sets. 

\begin{proposition}
\label{Prop: Structure of Periodic}
Let $W$ be a periodic subset of $\bbZ^d$ with periods $u_1, \cdots, u_d$. Let $\calQ$ denote the image of $W$ under the map $\pi: W\to \bbZ^d/\calL$. 
\begin{enumerate}
\item 
The nonempty fibres of the map $\pi: W \to \bbZ^d/\calL$, i.e., the sets $W_{\bar v}$ for $\bar v\in \calQ$ are periodic with periods $u_1, \cdots, u_d$. 
\item 
For any $\bar v\in \calQ$, denote by $\calW'_{\bar v}$ the set of elements $w\in W_{\bar v}$ such that the intersection of the sets $w - (\bbN u_1 + \cdots + \bbN u_d)$ and $W_{\bar v}$ contains no element other than $w$. Then for any $\bar v\in \calQ$, the set $\calW'_{\bar v}$ is a nonempty finite subset of $W_{\bar v}$ and the equality
\begin{equation}
W_{\bar v} = \calW'_{\bar v} + (\bbN u_1 + \cdots + \bbN u_d) 
\end{equation}
holds.
\item 
Let $\calW$ denote the set of elements $w\in W$ such that the intersection of the sets $w- (\bbN u_1 + \cdots + \bbN u_d)$ and $W$ contains no element other than $w$. Then the equalities 
\begin{align*}
\calW &= \sqcup_{\bar v\in \calQ} \calW'_{\bar v}, \\
\calW_{\bar v}& = \calW'_{\bar v}, \\ 
W 
&= \calW + (\bbN u_1 + \cdots + \bbN u_d)\\
&= \sqcup _{\bar v\in \calQ}  ( \calW_{\bar v} + (\bbN u_1 + \cdots + \bbN u_d ))  
\end{align*}
hold.
\end{enumerate}
\end{proposition}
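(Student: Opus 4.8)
The plan is to reduce everything to the order-combinatorics of a single coset and to invoke the finiteness of minimal elements (Dickson's lemma). Throughout write $\calL^{+} := \bbN u_1 + \cdots + \bbN u_d$ for the submonoid generated by the periods, and define a relation $\preceq$ on $\bbZ^d$ by declaring $x\preceq y$ whenever $y - x\in \calL^{+}$. Because $u_1, \cdots, u_d$ satisfy no nontrivial $\bbZ$-linear relation, we have $\calL^{+}\cap(-\calL^{+}) = \{0\}$, so $\preceq$ is a genuine partial order. Fixing a lift $v$ of any $\bar v\in \bbZ^d/\calL$, the coordinates with respect to $u_1, \cdots, u_d$ identify the fibre $v + \calL$ with $\bbZ^d$ and carry $\preceq$ to the coordinatewise order; under this identification the condition $X\supseteq X + \calL^{+}$ becomes ``upward closed'' and the condition $X\subseteq F + \calL^{+}$ becomes ``bounded below in every coordinate''. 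The one nontrivial input is Dickson's lemma: a subset of $\bbN^d$ has only finitely many minimal elements.

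For part (1), I would check the two defining conditions of periodicity on each nonempty fibre $W_{\bar v}$. Upward closure is inherited because $\calL^{+}\subseteq \calL$ preserves cosets, so $W + \calL^{+}\subseteq W$ restricts to $W_{\bar v} + \calL^{+}\subseteq W_{\bar v}$. For boundedness, intersecting $W\subseteq F + \calL^{+}$ with the coset $\bar v$ gives $W_{\bar v}\subseteq F_{\bar v} + \calL^{+}$, where $F_{\bar v} := \{f\in F \,|\, \bar f = \bar v\}$; this set is finite, and it is nonempty precisely because $W_{\bar v}\neq \emptyset$ forces some $f\in F$ with $\bar f = \bar v$. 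Hence each $W_{\bar v}$ (for $\bar v\in \calQ$) is periodic with the same periods.

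For part (2), note first that $\calW'_{\bar v}$ is exactly the set of $\preceq$-minimal elements of $W_{\bar v}$. Given any $x\in W_{\bar v}$, the down-set $(x - \calL^{+})\cap W_{\bar v}$ is squeezed between the coordinatewise lower bound coming from $F_{\bar v}$ and the upper bound $x$, hence is a finite nonempty poset; a minimal element $w$ of it is $\preceq$-minimal in all of $W_{\bar v}$, and $x = w + (x - w)\in \calW'_{\bar v} + \calL^{+}$. This simultaneously shows $\calW'_{\bar v}\neq\emptyset$ and the inclusion $W_{\bar v}\subseteq \calW'_{\bar v} + \calL^{+}$; the reverse inclusion is upward closure from part (1). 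Finiteness of $\calW'_{\bar v}$ is the main point: distinct minimal elements are pairwise $\preceq$-incomparable, and after translating $W_{\bar v}$ into $\bbN^d$ by its lower bound, Dickson's lemma forbids an infinite such antichain.

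For part (3), the key observation is that $w - \calL^{+}$ lies entirely in the coset of $w$, so $(w - \calL^{+})\cap W = (w - \calL^{+})\cap W_{\bar v}$ for $w\in W_{\bar v}$; consequently the global minimality condition defining $\calW$ agrees fibrewise with the one defining $\calW'_{\bar v}$, giving $\calW_{\bar v} = \calW'_{\bar v}$ and (since $\calW\subseteq W$ meets only the fibres in $\calQ$) the disjoint decomposition $\calW = \sqcup_{\bar v\in \calQ}\calW'_{\bar v}$. Summing the fibrewise equality $W_{\bar v} = \calW'_{\bar v} + \calL^{+}$ of part (2) over $\bar v\in \calQ$, and using that distinct cosets are disjoint, yields $W = \sqcup_{\bar v\in \calQ}(\calW_{\bar v} + \calL^{+}) = \calW + \calL^{+}$. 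The only genuine obstacle is the finiteness in part (2); all remaining steps are bookkeeping ensuring that $\calL^{+}$-translations and $\calL^{+}$-down-sets never leave a coset.
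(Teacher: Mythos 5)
Your proof is correct and follows essentially the same route as the paper's: restrict to each fibre, translate it into a shifted copy of $\bbN^d$ via the coordinates with respect to $u_1,\cdots,u_d$, and identify $\calW'_{\bar v}$ with the (finite, generating) set of minimal elements. The only real difference is that you make the key finiteness input explicit by invoking Dickson's lemma, whereas the paper simply asserts the corresponding claim about periodic subsets of a translated cone without further justification.
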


\begin{proof}
Since $W$ is periodic with periods $u_1, \cdots, u_d$, and $\calL$ is equal to $\bbZ u_1 + \cdots + \bbZ u_d$, it follows that the nonempty fibres of the map $\pi: W\to \bbZ^d/\calL$ are periodic with periods $u_1, \cdots, u_d$. These fibres are precisely the sets $W_{\bar v}$ for $\bar v\in \calQ$. This proves the first statement.

Let $\bar v$ be an element of $\calQ$ and $v\in W_{\bar v}$ be a lift of $\bar v$ modulo $\calL$. Note that $-v+ W_{\bar v}$ is periodic with periods $u_1, \cdots, u_d$ and there exists a nonempty finite subset $F$ of $\bbZ^d$ such that $-v+ W_{\bar v}$ is contained in $F+ \bbN u_1 + \cdots + \bbN u_d$. Since $-v+ W_{\bar v}$ is contained in $\bbZ u_1 + \cdots + \bbZ u_d$, note that $F$ can be taken to be a subset of $\bbZ u_1 + \cdots + \bbZ u_d$. For $1\leqslant i\leqslant d$, let $\lambda_i$ denote the minimum of the $i$-th coordinates of the elements of $F$. Then it follows that $F+ \bbN u_1 + \cdots + \bbN u_d$ is contained in $(\lambda_1 u_1 + \cdots + \lambda_d u_d)+(\bbN u_1 + \cdots + \bbN u_d)$. So $-v+ W_{\bar v}$ is periodic with periods $u_1, \cdots, u_d$ and it is contained in $(\lambda_1 u_1 + \cdots + \lambda_d u_d)+(\bbN u_1 + \cdots + \bbN u_d)$. Note that 
\begin{equation}
-v+\calW'_{\bar v} = 
\{u\in (-v+ W_{\bar v})\,|\, (u-(\bbN u_1 + \cdots + \bbN u_d))\cap (-v+ W_{\bar v}) = \{u\} \}.
\end{equation}
Then the second statement follows since if a periodic subset $U$ of $\bbZ^d$ with periods $u_1, \cdots, u_d$ is contained in $(\mu_1 u_1 + \cdots + \mu_d u_d)+(\bbN u_1 + \cdots + \bbN u_d)$ for some integers $\mu_1, \cdots, \mu_d$, then $\calU := \{u\in U \,|\, (u-(\bbN u_1 + \cdots + \bbN u_d))\cap U = \{u\} \}$ is a nonempty finite subset of $U$ and 
\begin{equation}
U = \calU + (\bbN u_1 + \cdots + \bbN u_d)
\end{equation}
holds.

If $w$ is an element of $\calW$ and $\pi(w)$ is equal to $\bar v$, then the intersection of the sets $w - (\bbN u_1 + \cdots + \bbN u_d)$ and $W_{\bar v}$ contains no element other than $w$. Hence $w$ belongs to $\calW'_{\bar v}$. So $\calW$ is contained in $\sqcup_{\bar v\in \calQ} \calW'_{\bar v}$. Note that for any $\bar v\in \calQ$ and any $w\in W_{\bar v}$, 
\begin{equation}
(w - (\bbN u_1 + \cdots + \bbN u_d) )\cap W = 
(w - (\bbN u_1 + \cdots + \bbN u_d) )\cap W_{\bar v} = \{w\}.
\end{equation}
This implies that $\sqcup_{\bar v\in \calQ} \calW'_{\bar v}$ is contained in $\calW$, proving the first equality in statement (3). Then the remaining two equalities are immediate.
\end{proof}

\begin{theorem}
\label{Thm: Structure of Eventually Periodic}
Let $W$ be an eventually periodic subset of $\bbZ^d$ with periods $u_1, \cdots, u_d$. 
\begin{enumerate}
\item 
Let $\scrW$ denote the set of all elements $w\in W$ such that $w+ \bbN u_1 + \cdots + \bbN u_d$ is not contained in $W$. Then $\scrW$ is a finite set. 
\item 
The set $W\setminus \scrW$ is periodic with periods $u_1, \cdots,u_d$.
 \item 
The following sets are equal. 
\begin{enumerate}
\item The set of all elements of $\bbZ^d/\calL$ having infinite fibre under the composite map $W\hra \bbZ^d \twoheadrightarrow \bbZ^d/\calL$.
\item The set of all elements of $\bbZ^d/\calL$ having infinite fibre under the composite map $W\setminus \scrW\hra \bbZ^d \twoheadrightarrow \bbZ^d/\calL$.
\item The image $\pi(W\setminus \scrW)$ of $W\setminus \scrW$ under $\pi$, to be denoted by $\calQ$. 
\end{enumerate}
\item 
The nonempty fibres of the map $\pi: W\setminus \scrW\to \bbZ^d/\calL$, i.e., the sets $(W\setminus \scrW)_{\bar v}$ for $\bar v\in \calQ$ are periodic with periods $u_1, \cdots, u_d$. 
\item 
For any $\bar v\in \calQ$, denote by $\calW'_{\bar v}$ the set of elements $w\in (W\setminus\scrW)_{\bar v}$ such that the intersection of the sets $w - (\bbN u_1 + \cdots + \bbN u_d)$ and $(W\setminus\scrW)_{\bar v}$ contains no element other than $w$. Then for any $\bar v\in \calQ$, the equality
\begin{equation}
(W\setminus\scrW)_{\bar v} = \calW'_{\bar v} + (\bbN u_1 + \cdots + \bbN u_d) 
\end{equation}
holds. Moreover, $\calW'_{\bar v}$ is a finite set.
\item 
Let $\calW$ denote the set of elements $w\in W\setminus \scrW$ such that the intersection of the sets $w- (\bbN u_1 + \cdots + \bbN u_d)$ and $W\setminus \scrW$ contains no element other than $w$. Then the equalities 
\begin{align*}
\calW & = \sqcup_{\bar v\in \calQ} \calW'_{\bar v}, \\
\calW_{\bar v} & = \calW'_{\bar v}, \\ 
W \setminus \scrW
&= \calW + (\bbN u_1 + \cdots + \bbN u_d)\\
&= \sqcup _{\bar v\in \calQ}  ( \calW_{\bar v} + (\bbN u_1 + \cdots + \bbN u_d ))  
\end{align*}
hold.
\item
The following 
\begin{equation}
\label{Eqn: Decomposition}
W 
= \scrW \sqcup (\calW + (\bbN u_1 + \cdots + \bbN u_d))
= \mathscr W \sqcup ( \sqcup _{\bar v\in \calQ}  ( \calW_{\bar v} + (\bbN u_1 + \cdots + \bbN u_d ))  )
\end{equation}
expresses $W$ as a disjoint union of its subsets.
\end{enumerate}
\end{theorem}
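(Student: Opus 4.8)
The plan is to reduce the entire statement, after isolating the finite ``boundary'' set $\scrW$, to the already-proved Proposition~\ref{Prop: Structure of Periodic} for genuinely periodic sets. Throughout I would write $T := \bbN u_1 + \cdots + \bbN u_d$ for the sub-monoid of $\bbZ^d$ generated by the periods. Since $u_1, \cdots, u_d$ satisfy no nontrivial $\bbZ$-linear relation, the map $\bbN^d \to T$, $(n_1, \cdots, n_d)\mapsto \sum_i n_i u_i$ is a bijection, so $T$ and each translate $x+T$ (for $x\in\bbZ^d$) is infinite, and $T\subseteq \calL$. The one structural observation I would use repeatedly is the monotonicity $y\in x+T \Rightarrow y+T\subseteq x+T$. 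I would treat the seven items in the stated order, with items (4)--(6) coming out as direct applications of Proposition~\ref{Prop: Structure of Periodic} once item (2) is secured.

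For item (1): by definition of eventual periodicity, $x+T\subseteq W$ holds for all sufficiently large $x\in W$, so the set of $x\in W$ with $x+T\not\subseteq W$ contains no sufficiently large element and hence (under the convention identifying ``sufficiently large'' with ``cofinitely many'' in $\bbZ^d$) is finite; but this set is exactly $\scrW$, so $\scrW$ is finite. For item (2), I would check the two defining properties of a periodic set for $W\setminus\scrW$. Containment in $F+T$ is inherited from $W$. For forward-invariance, take $x\in W\setminus\scrW$, so $x+T\subseteq W$; for any $y\in x+T$ the monotonicity gives $y+T\subseteq x+T\subseteq W$, whence $y\notin\scrW$, so $x+T\subseteq W\setminus\scrW$. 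Thus $W\setminus\scrW$ is periodic with periods $u_1, \cdots, u_d$.

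Item (3) splits as (a)$=$(b)$=$(c). Because $\scrW$ is finite, each fibre $W_{\bar v}$ and the corresponding $(W\setminus\scrW)_{\bar v}$ differ by at most $|\scrW|$ elements, so one is infinite exactly when the other is, giving (a)$=$(b). For (b)$=$(c), the inclusion of the infinite-fibre set into the image $\pi(W\setminus\scrW)$ is trivial; conversely, if $\bar v\in\pi(W\setminus\scrW)$, pick $w\in(W\setminus\scrW)_{\bar v}$, and then $w+T\subseteq W\setminus\scrW$ by item (2), $w+T$ lies in the single coset $\bar v$ since $T\subseteq\calL$, and $w+T$ is infinite, so the fibre at $\bar v$ is infinite. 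Items (4), (5), (6) then follow by applying Proposition~\ref{Prop: Structure of Periodic} verbatim to the periodic set $W\setminus\scrW$, whose image under $\pi$ is precisely $\calQ$: its part (1) gives item (4), its part (2) gives item (5) (including the finiteness of $\calW'_{\bar v}$), and its part (3) gives item (6). Finally item (7) is assembly: $W=\scrW\sqcup(W\setminus\scrW)$ tautologically, $W\setminus\scrW=\calW+T$ by item (6), and the refinement into $\sqcup_{\bar v\in\calQ}(\calW_{\bar v}+T)$ is again item (6), the pieces being disjoint as they lie in distinct $\calL$-cosets.

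The only genuine content beyond finiteness bookkeeping is item (2): the substance is that deleting the finite set $\scrW$ preserves forward-invariance, and this is exactly where the monotonicity $y\in x+T\Rightarrow y+T\subseteq x+T$ does the work. The main thing to watch is the degenerate case $W\setminus\scrW=\emptyset$ (equivalently, $W$ finite, so $\scrW=W$), where Proposition~\ref{Prop: Structure of Periodic} does not apply and items (2)--(6) should be read as vacuous; I would flag this at the outset and otherwise assume $W$ infinite. Everything else is either accounting with the finiteness of $\scrW$ (items (1) and (3)) or a direct citation of the periodic case (items (4)--(6)).
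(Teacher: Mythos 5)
Your proof is correct and follows essentially the same route as the paper's: finiteness of $\scrW$ straight from the definition, periodicity of $W\setminus\scrW$ via the forward-invariance $y\in x+\bbN u_1+\cdots+\bbN u_d\Rightarrow y+\bbN u_1+\cdots+\bbN u_d\subseteq x+\bbN u_1+\cdots+\bbN u_d$, comparison of fibres up to the finite set $\scrW$ for item (3), and a direct citation of Proposition~\ref{Prop: Structure of Periodic} for items (4)--(7). Your explicit flagging of the degenerate case $W\setminus\scrW=\emptyset$ is a minor point of extra care that the paper handles only by asserting that $W$ is infinite.
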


\begin{proof}
Since $W$ is eventually periodic with periods $u_1, \cdots, u_d$, it contains $w + \bbN u_1 + \cdots +\bbN u_d$ for almost all $w\in W$. So it follows that $\scrW$ is finite. 

Since $W$ is nonempty, it is infinite. So $W\setminus\scrW$ is nonempty. Moreover, since $W$ is eventually periodic, it follows that $W\setminus\scrW$ is contained in $F + (\bbN u_1 + \cdots + \bbN u_d)$ for some finite subset $F$ of $\bbZ^d$. By the definition of $\scrW$, it follows that $(W\setminus \scrW) + (\bbN u_1 + \cdots + \bbN u_d)$ is contained in $W$. To prove that $W\setminus \scrW$ is periodic with periods $u_1, \cdots, u_d$, it only remains to show that $(W\setminus \scrW) + (\bbN u_1 + \cdots + \bbN u_d)$ does not intersect with $\scrW$. Suppose $\scrW$ contains $w+ \lambda_1 u_1 + \cdots + \lambda_d u_d$ for some $w\in W\setminus \scrW$ and for some $\lambda_1, \cdots, \lambda_d\in \bbN$. So for some $\mu_1, \cdots, \mu_d\in \bbN$, the vector $(w + \lambda_1 u_1 + \cdots + \lambda_d u_d) + (\mu_1 u_1 + \cdots + \mu_d u_d)$ would not be contained in $W$, which is absurd since $w\in W\setminus \scrW$. Consequently, $(W\setminus \scrW) + ( \bbN u_1 + \cdots + \bbN u_d)$ is contained in $W\setminus \scrW$. Hence $W\setminus \scrW$ is periodic with periods $u_1, \cdots,u_d$.

Note $W= \scrW\sqcup (W\setminus \scrW)$ is a decomposition of $W$ into two disjoint subsets where $\scrW$ is finite and $W\setminus \scrW$ is infinite. Since the set $\bbZ^d/\calL$ is finite, it follows that an element $\bar v$ of $\bbZ^d/\calL$ has infinite fibre under $\pi: W \to \bbZ^d/\calL$ if and only if it has infinite fibre under $\pi: W\setminus \scrW \to \bbZ^d/\calL$. Moreover, any element in the image of the map $\pi: W\setminus \scrW \to \bbZ^d/\calL$ has infinite fibre because $W\setminus \scrW$ contains the $(\bbN u_1 + \cdots + \bbN u_d)$-translate of any of its points. Hence the three sets in part (a), (b), (c) are equal. 

The statements (4), (5), (6), (7) follow from Proposition \ref{Prop: Structure of Periodic}. 
\end{proof}

\begin{remark}
The decomposition of an eventually periodic subset $W$ of $\bbZ^d$ with periods $u_1, \cdots, u_d$ as 
$$W 
= \scrW \sqcup (\calW + (\bbN u_1 + \cdots + \bbN u_d))
$$
is unique 
(where $\scrW, \calW$ are as in Theorem \ref{Thm: Structure of Eventually Periodic}) in the sense that if $\scrW', \calW'$ are subsets of $W$ such that $w + (\bbN u_1 + \cdots + \bbN u_d)$ is not contained in $W$ for any $w\in \scrW'$ and $W$ is equal to the disjoint union $\scrW' \sqcup (\calW' + (\bbN u_1 + \cdots + \bbN u_d))$, then $\scrW = \scrW', \calW = \calW'$. 
\end{remark}

\begin{remark}
Note that not all the nonempty subsets $X$ of $\bbZ^d$ satisfying 
\begin{equation}
\label{Eqn: Periodic}
X\supseteq X +(\bbN u_1 + \cdots + \bbN u_d)
\end{equation}
 are of the form $\calW + (\bbN u_1 + \cdots + \bbN u_d)$ for some subset $\calW$ of $\bbZ^d$ satisfying condition (3) of Proposition \ref{Prop: Structure of Periodic} (for instance, consider the subset $\bbZ\times \bbN$ of $\bbZ^2$). Moreover, there are subsets $X$ of $\bbZ^d$ which satisfy Equation \eqref{Eqn: Periodic} and are of the form $\calW  + (\bbN u_1 + \cdots + \bbN u_d)$, but are not periodic, i.e., not contained in the sum of $\bbN u_1 + \cdots + \bbN u_d$ and a nonempty finite set (for instance, consider the subset $\{ (n,-n)\,|\, n\in \bbN\} + \bbN^2$ of $\bbZ^2$). 
\end{remark}

\subsection{Necessary condition}

\begin{theorem}
\label{Thm: existence of min complement implies}
Let $W$ be an eventually periodic subset of $\bbZ^d$ with periods $u_1, \cdots, u_d$. Let $\scrW, \calW$ be as in Theorem \ref{Thm: Structure of Eventually Periodic}. Let $M$ be a minimal complement of $W$ in $\bbZ^d$. Let $M_\infty$ denote the union of those fibres of the map $\pi: M \to \bbZ^d/\calL$ containing a sequence of elements of $\bbZ^d$ such that the maximum of their coordinates with respect to $u_1, \cdots, u_d$ is arbitrarily small negative number. Let $\calM$ be a subset of $M_\infty$ such that the composite map $\calM\hra M_\infty \twoheadrightarrow \pi(M_\infty)$ is a bijection. Let $\scrW_0$ (resp. $\scrW_1$) denote the set of elements of $\scrW$ which are congruent to some element (resp. no element) of $\calW$ modulo $\calL$\footnote{
Equivalently, $\scrW_0$ is the inverse image of $\calQ$ under the map $\pi: \scrW\to \bbZ^d/\calL$, and $\scrW_1$ is its complement in $\scrW$.
}. Then the following statements hold.
\begin{enumerate}
\item The set $M_\infty$ is an infinite set, $\calM$ is a nonempty finite subset of $M_\infty$, $\scrW_1$ is nonempty, and the map $\pi: (\calM + (\calW \cup \scrW_1)) \to \bbZ^d/\calL$ is surjective.
\item For any $m\in \calM$, there exists $w\in \scrW_1$ such that $m + w \not\equiv m' + w' \modu \calL$ for any $m'\in \calM, w'\in \calW$.
\end{enumerate}
\end{theorem}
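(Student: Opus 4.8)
The plan is to analyze the complement condition coordinate-wise and to exploit that an eventually periodic set is bounded below. Write $P$ for $\bbN u_1 + \cdots + \bbN u_d$, and recall from Theorem \ref{Thm: Structure of Eventually Periodic} that $W = \scrW \sqcup (\calW + P)$ with $\scrW$ finite; since $W \subseteq F + P$ for a finite $F$, every element of $W$ has all its coordinates with respect to $u_1, \cdots, u_d$ bounded below by a constant depending only on $F$. Consequently, if $z\in \bbZ^d$ has all such coordinates very negative and $z = w + m$ with $w\in W$, $m\in M$, then $m$ too has all coordinates very negative. Fixing a coset $\bar q \in \bbZ^d/\calL$ and letting $z$ run through points of $\pi^{-1}(\bar q)$ whose coordinates tend to $-\infty$, the matching elements $m$ have coordinates tending to $-\infty$; as $\bbZ^d/\calL$ is finite, pigeonhole places infinitely many of them in a single fibre of $\pi\colon M\to \bbZ^d/\calL$, which therefore reaches arbitrarily small coordinates and so lies in $M_\infty$. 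This already gives $M_\infty\neq\emptyset$ (hence $M_\infty$ is infinite, being a union of such fibres) and shows that $\calM$, being in bijection with the finite set $\pi(M_\infty)$, is a nonempty finite set.

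For surjectivity I would first record that $\pi(\calW\cup\scrW_1) = \pi(W)$: indeed $\pi(\calW + P) = \pi(\calW)$ because $P\subseteq \calL$, and $\pi(\scrW_0)\subseteq \pi(\calW) = \calQ$ by the definition of $\scrW_0$, so $\pi(W) = \pi(\calW)\cup\pi(\scrW) = \calQ\cup \pi(\scrW_1) = \pi(\calW\cup\scrW_1)$; likewise $\pi(M_\infty) = \pi(\calM)$. Now for an arbitrary coset $\bar q$, the argument of the first paragraph produces $\bar r\in \pi(M_\infty)$ and $w\in W$ with $\bar q = \bar r + \pi(w)$, whence $\bar q \in \pi(M_\infty) + \pi(W) = \pi(\calM) + \pi(\calW\cup\scrW_1)$. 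Since $\bar q$ was arbitrary, $\pi\colon \calM + (\calW\cup\scrW_1)\to \bbZ^d/\calL$ is surjective.

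The remaining assertions — that $\scrW_1$ is nonempty and that condition $(2)$ holds — both rest on a single re-covering argument exploiting minimality, and this is the heart of the matter. Fix $m\in \calM$ and suppose, towards a contradiction, that for every $w\in \scrW_1$ one has $m + w \equiv m' + w' \modu \calL$ for some $m'\in \calM$, $w'\in \calW$ (when $\scrW_1 = \emptyset$ this hypothesis is vacuous). I claim $m$ is then \emph{redundant}, i.e.\ $M\setminus\{m\}$ is still a complement, contradicting minimality. Let $z = w + m$ be any point covered by $m$, with $w\in W$, and set $\bar q = \pi(w) + \pi(m)$. If $\pi(w)\in \calQ$, choose $c\in \calW$ with $\pi(c) = \pi(w)$ and an element $m_\ast$ of the fibre $(M_\infty)_{\pi(m)}$, distinct from $m$ and with coordinates small enough that $z - c - m_\ast$ has all coordinates $\geqslant 0$ — possible since this fibre reaches arbitrarily small coordinates; then $z - c - m_\ast\in P$ and $z = (c + (z - c - m_\ast)) + m_\ast$ with $c + (z - c - m_\ast)\in \calW + P\subseteq W$ and $m_\ast\in M\setminus\{m\}$. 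If instead $\pi(w)\in \pi(\scrW_1)$, so that $w\in \scrW_1$, the hypothesis furnishes $m'\in \calM$ and $w'\in \calW$ with $\pi(m) + \pi(w) = \pi(m') + \pi(w')$; the identical computation, now with a lift $c'$ of $\pi(w')$ in $\calW$ and a deep element $m_\ast$ of $(M_\infty)_{\pi(m')}$ distinct from $m$, re-covers $z$ through $M\setminus\{m\}$. In either case every point covered by $m$ is covered by $M\setminus\{m\}$, so $m$ is redundant; this contradiction proves $(2)$, and specializing to $\scrW_1 = \emptyset$ (so that only the first case can occur) the same contradiction forces $\scrW_1$ to be nonempty. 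The genuine difficulty, and the only place minimality is truly used, is precisely this step: one must check that failure of the arithmetic non-coincidence condition lets every point lying over $m$ be rewritten either through the periodic part or through a different deep fibre, all the while keeping the substituted complement element distinct from $m$ and the substituted summand honestly inside $W$.
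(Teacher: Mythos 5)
Your proposal is correct and follows essentially the same two-step strategy as the paper: first, deep points of a coset force matching complement elements into a fibre of $M_\infty$ (giving infiniteness of $M_\infty$ and the surjectivity statement), and second, a redundancy/re-covering argument using the minimality of $M$ establishes both condition (2) and, by specializing to $\scrW_1=\emptyset$, the nonemptiness of $\scrW_1$. The details you flag as the crux — keeping the substitute element distinct from $m$ by using that the relevant fibre of $M_\infty$ is infinite, and landing the rewritten summand in $\calW + (\bbN u_1+\cdots+\bbN u_d)\subseteq W$ — are exactly the points the paper's proof also turns on.
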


\begin{proof}
Let $M_\fin$ denote $M\setminus M_\infty$. Let $\frakL$ be a subset of $\bbZ^d$ such that the map $\pi: \frakL \to \bbZ^d/\calL$ is bijective. By Theorem \ref{Thm: Structure of Eventually Periodic} (7), it follows that for each $\frakl\in \frakL$, there exists a positive integer $\lambda_\frakl$ such that the intersection of $M_\fin + W$ and $\frakl +(-\lambda_\frakl (u_1 + \cdots + u_d)) +(-(\bbN u_1 + \cdots + \bbN u_d))$ is empty. 
Hence $M_\fin +W$ contains no element of the set $\frakL + (-(\max_{\frakl\in \frakL}\lambda_\frakl) (u_1 + \cdots + u_d)) +(-(\bbN u_1 + \cdots + \bbN u_d))$. 
Since $M$ is a complement of $W$ in $\bbZ^d$ and $M$ is equal to the union of $M_\fin$ and $M_\infty$, it follows that $M_\infty$ is nonempty and 
\begin{equation}
\label{Eqn: Containment}
M_\infty + W \supseteq \frakL + (-(\max_{\frakl\in \frakL}\lambda_\frakl (u_1 + \cdots + u_d))) +(-(\bbN u_1 + \cdots + \bbN u_d)).
\end{equation}
So $M_\infty$ is infinite and $\calM$ is nonempty. If $\scrW$ were empty, then Equation \eqref{Eqn: Containment} would imply 
\begin{equation}
\label{Eqn: Containment 2}
M_\infty + \calW + (\bbN u_1 + \cdots + \bbN u_d) \supseteq \frakL + (-(\max_{\frakl\in \frakL}\lambda_\frakl (u_1 + \cdots + u_d))) +(-(\bbN u_1 + \cdots + \bbN u_d)).
\end{equation}
Then for any $m_0\in M_\infty$, $M\setminus\{m_0\}$ would be a complement to $W$. Indeed, given an element $x\in \bbZ^d$, it is equal to $m+w$ for some $m\in M$ and $w\in W$. If $m\neq m_0$, then $x$ belongs to $M\setminus\{m_0\}+W$. If $m=m_0$, then for some positive integers $\mu_1, \cdots, \mu_d$, $M_\infty$ would contain $m_0-(\mu_1 u_1 + \cdots + \mu_d u_d)$. Since $w$ belongs to $W = \calW + (\bbN u_1 + \cdots + \bbN u_d)$, it follows that $w + (\mu_1 u_1 + \cdots + \mu_d u_d)$ belongs to $W$. Hence $x$ belongs to $M\setminus\{m_0\}+W$. So $M\setminus\{m_0\}+W$ is equal to $\bbZ^d$, which is absurd. Hence $\scrW$ is nonempty. Now, using Theorem \ref{Thm: Structure of Eventually Periodic} (7) again, we obtain from Equation \eqref{Eqn: Containment} that the map 
$$\pi: (\calM + (\calW \cup \scrW)) \to \bbZ^d/\calL$$
is surjective. By the definition of $\scrW_0$, it follows that the map $\pi: (\calM + (\calW \cup \scrW_1)) \to \bbZ^d/\calL$ is surjective. This proves the first statement.

Now we prove that the second statement is true. On the contrary, suppose the second statement is false. So there exists an element $m\in \calM$ such that for each $w\in \scrW_1$, there exist elements $m'\in \calM, w'\in \calW$ such that $m+w \equiv m'+w'\modu \calL$. We prove that $M\setminus \{m\}$ is a complement to $W$, which would contradict the minimality of $M$, and thereby establish the second statement. 

Let $x$ be an element of $\bbZ^d$. Since $M$ is a complement of $W$, it follows that $x = m_0 + w_0$ for some $m_0\in M, w_0\in W$. If $m_0\neq m$, then $x$ belongs to $M\setminus\{m\} + W$. Suppose $m_0$ is equal to $m$, i.e., $x = m+w_0$. If $w_0$ belongs to $\scrW_1$, then there exist elements $m'\in \calM, w'\in \calW$ such that $m+w_0\equiv m'+w'\modu \calL$. This gives $x = m'+w'+(n_1u_1 + \cdots + n_du_d)$ for some integers $n_1, \cdots, n_d$. Since $w'$ belongs to $\calW$, for any integers $r_1, \cdots, r_d$ with $r_i\geqslant - n_i$, $W$ contains $w' + (n_1u_1 + \cdots + n_du_d) + (r_1u_1 + \cdots + r_du_d)$. Since $m'\in \calM\subseteq M_\infty$, it follows that $m'-(\lambda_1 u_1 + \cdots + \lambda_d u_d)\in M_\infty \setminus\{m\}\subseteq M\setminus\{m\}$ for some integers $\lambda_1, \cdots, \lambda_d$ satisfying $\lambda_i\geqslant \max \{0, -n_i\}$ for $1\leqslant i\leqslant d$. Hence $x = (m'-(\lambda_1 u_1 + \cdots + \lambda_d u_d)) + (w'+ (n_1u_1 + \cdots + n_du_d) +( \lambda_1 u_1 + \cdots + \lambda_d u_d) )$ belongs to $M \setminus \{m\} + W$. On the other hand, if $w_0$ does not belong to $\scrW_1$, then $w_0$ belongs to $\scrW_0 \sqcup (\calW + (\bbN u_1 + \cdots + \bbN u_d))$. Since the image of $\scrW_0$ under $\pi: \scrW_0\to \bbZ^d/\calL$ is contained in $\calQ$ and the image of $\calW$ under $\pi: \calW \to \bbZ^d/\calL$ is equal to $\calQ$, there exist positive integers $\mu_1, \cdots, \mu_d$ such that $w_0 + \mu_1 u_1 + \cdots + \mu_d u_d$ belongs to $\calW + (\bbN u_1 + \cdots + \bbN u_d)$. Moreover, since $m\in M\subseteq M_\infty$, there exist integers $\lambda_1, \cdots, \lambda_d$ with $\lambda_i\geqslant \mu_i$ for any $1\leqslant i\leqslant d$ such that $m - (\lambda_1 u_1 + \cdots + \lambda_d u_d)$ belongs to $M_\infty \setminus \{m\} \subseteq M\setminus\{m\}$. Also note that $w_0 + \lambda_1 u_1 + \cdots + \lambda_d u_d$ belongs to $W$ because $w_0 + \mu_1 u_1 + \cdots + \mu_d u_d$ belongs to $\calW + (\bbN u_1 + \cdots + \bbN u_d)\subseteq W$ and $\lambda_i\geqslant \mu_i$ for any $1\leqslant i\leqslant r$. So $x = m+w_0$ belongs to $M\setminus\{m\} + W$. Consequently, $x$ belongs to $M\setminus\{m\} + W$ whether $w_0$ belongs to $\scrW_1$ or not. Hence $M\setminus\{m\}$ is a complement to $W$, which is a contradiction to the given condition that $M$ is a minimal complement to $W$. So the second statement is true. 
\end{proof}

\begin{remark}
Note that Theorem \ref{Thm: existence of min complement implies} is in accordance with Corollary \ref{Cor: Nd minimal complement}(2), (3) and Proposition \ref{Prop: AP Complement iff}(3), (4). 
\end{remark}

\begin{corollary}
\label{Cor: No periodic has minimal}
By Theorem \ref{Thm: existence of min complement implies}(1), periodic subsets of $\bbZ^d$ do not have minimal complements in $\bbZ^d$.
\end{corollary}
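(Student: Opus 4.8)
The plan is to derive the conclusion as an immediate contradiction from Theorem~\ref{Thm: existence of min complement implies}(1), by identifying the invariant $\scrW_1$ attached to a periodic set and showing that it is forced to be empty. To set things up, I would first recall (as noted in the Example following Definition~\ref{Defn: periodic}) that every periodic subset $W$ of $\bbZ^d$ with periods $u_1, \cdots, u_d$ is in particular eventually periodic with the same periods. Hence both Theorem~\ref{Thm: Structure of Eventually Periodic} and Theorem~\ref{Thm: existence of min complement implies} apply to $W$, and the sets $\scrW, \calW, \scrW_1$ occurring in their statements are well defined for $W$.

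The key step is to observe that, for a \emph{periodic} set, the finite set $\scrW$ of Theorem~\ref{Thm: Structure of Eventually Periodic}(1) is empty. By definition $\scrW$ consists of those $w\in W$ for which $w + \bbN u_1 + \cdots + \bbN u_d$ fails to be contained in $W$; but periodicity in the sense of Definition~\ref{Defn: periodic} requires precisely that $x + \bbN u_1 + \cdots + \bbN u_d \subseteq W$ for \emph{every} $x\in W$, so no such $w$ can exist and $\scrW = \emptyset$. Since $\scrW_1$ is by definition a subset of $\scrW$ (the elements congruent to no element of $\calW$ modulo $\calL$), it follows that $\scrW_1 \subseteq \scrW = \emptyset$, i.e.\ $\scrW_1 = \emptyset$.

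To finish, I would argue by contradiction: suppose $W$ admits a minimal complement $M$ in $\bbZ^d$. Then Theorem~\ref{Thm: existence of min complement implies}(1) asserts, among other things, that $\scrW_1$ is nonempty, contradicting $\scrW_1 = \emptyset$. Therefore no periodic subset of $\bbZ^d$ has a minimal complement. I do not expect any genuine obstacle here, since the corollary is a direct specialization of the necessary condition already proved; the only point requiring attention is the bookkeeping verification that periodicity forces $\scrW = \emptyset$, which is exactly the place where the stronger hypothesis of Definition~\ref{Defn: periodic} (the period-stability holding for \emph{all} $x\in W$, not merely for sufficiently large $x$) is used in contrast to the eventually periodic case.
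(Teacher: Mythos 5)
Your proposal is correct and matches the paper's intended argument exactly: the paper offers no separate proof beyond citing Theorem \ref{Thm: existence of min complement implies}(1), and the mechanism you spell out --- periodicity forces $\scrW=\emptyset$, hence $\scrW_1=\emptyset$, contradicting the nonemptiness of $\scrW_1$ required by that theorem when a minimal complement exists --- is precisely the intended specialization.
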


\subsection{Sufficient condition}

\begin{theorem}
\label{Thm: Implies existence of min comple}
Let $W$ be an eventually periodic subset of $\bbZ^d$ with periods $u_1, \cdots, u_d$. Let $\scrW, \calW$ be as in Theorem \ref{Thm: Structure of Eventually Periodic}, and $\scrW_1$ be as in Theorem \ref{Thm: existence of min complement implies}. Suppose $\scrW_1$ is nonempty and there exists a nonempty finite subset $\calM$ of $\bbZ^d$ such that the following conditions hold. 
\begin{enumerate}
\item The map $\pi: \calM\to \bbZ^d/\calL$ is injective.
\item The map $\pi: (\calM + (\calW \cup \scrW_1)) \to \bbZ^d/\calL$ is surjective.
\item For any $m\in \calM$, there exists $w\in \scrW_1$ such that $m + w \not\equiv m' + w' \modu \calL$ for any $m'\in \calM\setminus\{m\}, w'\in \calW\cup \scrW_1$.
\end{enumerate}
Then $W$ has a minimal complement in $\bbZ^d$.
\end{theorem}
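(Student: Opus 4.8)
The plan is to reverse-engineer the necessary-condition analysis of Theorem \ref{Thm: existence of min complement implies}: build an explicit complement $M$ out of the data $\calM$, $\calW$, $\scrW_1$ and then verify that it is minimal. Recall from Theorem \ref{Thm: Structure of Eventually Periodic} that $W = \scrW \sqcup (\calW + (\bbN u_1 + \cdots + \bbN u_d))$ and that the classes of $W$ modulo $\calL$ are exactly those of $\calQ$ together with those of $\pi(\scrW_1)$, the latter being disjoint from $\calQ$. I would take $M$ to be a disjoint union, over $m\in\calM$, of cofinal-downward subsets $A_m$ of the cosets $m+\calL$ (``cofinal-downward'' meaning they meet every translate $z-(\bbN u_1+\cdots+\bbN u_d)$, in the spirit of Proposition \ref{Prop: AP Complement iff}), each chosen to be a union of cosets of a fixed, fine enough, finite-index sublattice $\calL'\subseteq\calL$; condition (1) guarantees the union is disjoint. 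In the cleanest case, where each class of $\scrW_1$ contains a single element, one simply takes $\calL'=\calL$ and $M=\calM+\calL$, so the refinement $\calL'\subsetneq\calL$ together with a finite correction set is needed only to accommodate several elements of $\scrW_1$ sitting in one class.

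To see that $M$ is a complement I would fix $x\in\bbZ^d$ and, using condition (2), write $\bar x=\overline{m+w}$ with $m\in\calM$ and $w\in\calW\cup\scrW_1$. If $\bar w\in\calQ$, then since $A_m$ is cofinal-downward and $W$ contains the upward cone $w+(\bbN u_1+\cdots+\bbN u_d)$ in that class, the sum $A_m+(\calW+(\bbN u_1+\cdots+\bbN u_d))$ fills the whole fibre $\pi^{\mo}(\bar x)$ (a cofinal-downward set plus an upward cone exhausts $\calL$, exactly as in the proofs of Proposition \ref{Prop: AP Complement iff} and Theorem \ref{Thm: Structure of Eventually Periodic}(7)). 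If $\bar x$ is attained only through $\scrW_1$, the downward part of the fibre is covered by $A_m+\scrW_1$ and the remaining upward part by the finite correction set. Hence $M+W=\bbZ^d$.

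The engine of minimality is condition (3). It assigns to each $m\in\calM$ a witness $w_m\in\scrW_1$ whose class $\overline{m+w_m}$ equals no $\overline{m'+w'}$ with $m'\in\calM\setminus\{m\}$ and $w'\in\calW\cup\scrW_1$; since the classes of $\calW\cup\scrW_1$ exhaust $\pi(W)$, this ``private'' fibre receives contributions in $M+W$ only from the single coset $m+\calL$, paired with elements of $W$ of class $\bar{w_m}\notin\calQ$, all of which lie in the finite set $\scrW_1$. Choosing $\calL'$ so that the elements of $\scrW_1$ of class $\bar{w_m}$ fall in distinct $\calL'$-cosets, I would show that for $p\in A_m$ the point $p+w_m$ admits a \emph{unique} representation as an element of $M$ plus an element of $W$, so that $p$ cannot be deleted. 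A parallel, easier argument shows that each point of the finite correction set uniquely covers the ``corner'' of the upward region it was introduced to cover. Together these make $M$ minimal.

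The hard part is precisely this minimality bookkeeping. Coverage forces the $A_m$ to be cofinal-downward, hence ``large'', while irredundancy forces them to be ``thin'' in each private fibre; the two are reconciled by using full cosets of the refined lattice $\calL'$, but this succeeds cleanly only when a single $\calL'$ can be chosen so that every relevant class of $\scrW_1$ maps bijectively onto $\calL/\calL'$. When distinct private fibres carry $\scrW_1$-classes of different cardinalities, no single $\calL'$ achieves this, and one must instead select, coset by coset, a subfamily of $\calL'$-cosets that realises the required covering along with a finite correction set whose points all remain irredundant. Verifying that such a selection exists and simultaneously preserves both the complement property and the uniqueness of every private-fibre witness is the crux; indeed the naive candidates $\calM-(\bbN u_1+\cdots+\bbN u_d)$ and $\calM+\calL$ already fail to be minimal as soon as some class of $\scrW_1$ contains more than one element.
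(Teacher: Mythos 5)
Your construction establishes the theorem only in the special case where each residue class of $\scrW_1$ modulo $\calL$ is a singleton (there $M=\calM+\calL$ indeed works), and you yourself flag the general case --- the choice of a sublattice $\calL'$ and of correction sets when the classes of $\scrW_1$ have different cardinalities --- as an unresolved ``crux''. That is a genuine gap: the minimality verification, which is the whole content of the theorem beyond the coverage computation, is not carried out. The missing idea is that one need not construct $M$ explicitly at all. Set $C=\calM+\calL$ and let $C'$ be the set of elements of $\bbZ^d$ congruent to no element of $\calM+\calW$ modulo $\calL$; conditions (1) and (2) give $C'\subseteq C+\scrW_1$. Because $\scrW_1$ is \emph{finite}, the enumeration-and-deletion argument of Proposition \ref{prop1} (with $\scrW_1$ playing the role of the finite set $S$ and $C'$ the role of $G$) produces a subset $M\subseteq C$ that is \emph{minimal} with respect to the property $M+\scrW_1\supseteq C'$: delete elements of $C$ one at a time whenever the property survives; the pigeonhole principle shows the intersection of the resulting decreasing chain still has the property.

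For such an $M$, condition (3) provides for each $m\in\calM$ a witness $w_m\in\scrW_1$ such that the ``private'' coset $m+w_m+\calL\subseteq C'$ can be covered only by $M_m+\scrW_1$, where $M_m=M\cap(m+\calL)$; since $\scrW_1$ is finite, this forces $M_m$ to be downward-cofinal. Coverage of $\bbZ^d$ then follows as in your second paragraph, so $M+W=\bbZ^d$. Finally --- and this is the step that replaces all of your bookkeeping --- if $M'\subseteq M$ satisfies $M'+W=\bbZ^d$, then every element of $M'+(W\setminus\scrW_1)$ is congruent to an element of $\calM+\calW$, so $M'+(W\setminus\scrW_1)$ misses $C'$ entirely and hence $M'+\scrW_1\supseteq C'$; minimality of $M$ for the auxiliary covering property then forces $M'=M$. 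Thus minimality as a complement of the infinite set $W$ is reduced to minimality for the finite-set covering problem $M+\scrW_1\supseteq C'$, which the greedy extraction delivers directly; no sublattice refinement or correction sets are needed.
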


\begin{proof}
Let $C$ denote the collection of all elements in $\bbZ^d$ which are congruent to some element of $\calM$ modulo $\calL$, and $C'$ denote the collection of all elements in $\bbZ^d$ which are congruent to no element of $\calM + \calW$ modulo $\calL$. 
Note that $C+\scrW_1$ contains all elements of $\bbZ^d$ which are congruent to some element of $\calM+\scrW_1$. By the first condition, it follows that each element of $C'$ is congruent to some element of $\calM+\scrW_1$. So $C+\scrW_1$ contains $C'$. 

We claim that $C$ has a subset $M$ such that $M$ is minimal among the subsets of $C$ with respect to the property that $M+\scrW_1$ contains $C'$. Note that $C$ is countably infinite. Hence its elements can be enumerated by the positive integers. Write $C=\{c_i\,|\, i\geqslant 1\}$. Define $C_1$ to be equal to $C$ and for any integer $i\geqslant 1$, define 
\begin{equation}
C_{i+1}=
\begin{cases}
C_i\setminus\{c_i\} & \text{ if $C_i\setminus\{c_i\}+\scrW_1$ contains }C',\\
C_i & \text{ otherwise}.
\end{cases}
\end{equation}
Let $M$ denote the intersection $\cap_{i\geqslant 1} C_i$. Note that for each $x\in C'$ and any integer $i\geqslant 1$, there exist elements $c_{x,i}\in C_i$ and $w_{x,i}\in \scrW_1$ such that $x = c_{x,i} + w_{x,i}$ holds. Since $\scrW_1$ is a nonempty finite set, by the Pigeonhole principle, for some element $t\in \scrW_1$, the equality $t=w_{x,i}$ holds for infinitely many positive integers $i$. Hence for infinitely many $i\geqslant 1$, we obtain $x-t=c_{x,i}$, which is an element of $C_i$. Consequently, for each positive integer $k$, there exists an integer $i_k > k$ such that $x-t$ is an element of $C_{i_k}$, which is contained in $C_k$ (as $i_k > k$) and thus $C_k$ contains $x-t$. We conclude that for each $x\in C'$, there exists an element $t\in \scrW_1$ such that $x-t$ belongs to $C_k$ for any $k\geqslant 1$, i.e., $x-t\in M$. In other words, $M+\scrW_1$ contains $C'$. Moreover, $M$ is minimal with respect to the property that $M+\scrW_1$ contains $C'$. On the contrary, assume that for some integer $j\geqslant 1$, $M$ contains $c_j$ and $M\setminus\{c_j\}+\scrW_1$ contains $C'$. Since $M$ is contained in $C_j$, it follows that $C_j\setminus\{c_j\}+\scrW_1$ contains $C'$. So $C_{j+1}$ does not contain $c_j$ and hence $c_j$ does not belong to $M= \cap _{i\geqslant 1} C_i$, which is absurd. This proves the claim. 

We claim that $M$ is a minimal additive complement to $S$ in $\bbZ^d$. For $m\in \calM$, define $M_m$ to be the set of all elements of $M$ congruent to $m$ modulo $\calL$, i.e., $M_m$ is the fibre of the map $M\to \bbZ^d/\calL$ at the point $\bar m$. Note that for any $m\in \calM$, the set $M_m$ contains a sequence of points such that the maximum of their coordinates with respect to $u_1, \cdots, u_d$ is arbitrary small negative number. Indeed, if we fix $m\in \calM$, then note that by condition (3), there exists $w\in \scrW_1$ such that $m+w \not \equiv m' + w'\modu \calL$ for any $m'\in \calM\setminus\{m\}$ and $w'\in \calW\cup \scrW_1$. Then it follows that $m+w$ is congruent to no element of $\calM+\calW$ modulo $\calL$, and hence $m+w+\calL$ is contained in 
$$C'\subseteq M+\scrW_1=(M_m+\scrW_1)\sqcup (\sqcup_{m'\in \calM \setminus\{m\}} (M_{m'}+\scrW_1)).$$ Furthermore, condition (3) implies that $\sqcup_{m'\in \calM \setminus\{m\}} (M_{m'}+\scrW_1)$ contains no element of $m+w+ \calL$. So $m+w+\calL$ is contained in $M_m+\scrW_1$. Thus 
$$m+w+(-\bbN (u_1 + \cdots +  u_d))\subseteq M_m+\scrW_1=\cup_{w'\in \scrW_1}(w'+M_m).$$ 
Since $\scrW_1$ is finite, it follows that for some $w'\in \scrW_1$, $w'+M_m$ contains a sequence of points of $m+w+(-\bbN (u_1 + \cdots + u_d))$ such that the maximum of their coordinates with respect to $u_1, \cdots, u_d$ is arbitrarily small number. Consequently, $M_m$ contains a sequence of points such that the maximum of their coordinates with respect to $u_1, \cdots, u_d$ is arbitrary small. 

Let $x$ be an element of $\bbZ^d$ which is congruent to some element of $\calM+\calW$ modulo $\calL$, i.e., $x$ is equal to $m+w+\ell$ for some $m\in \calM, w\in \calW, \ell\in \calL$. Since the set $M_m$ contains a sequence of points such that the maximum of their coordinates with respect to $u_1, \cdots, u_d$ is arbitrary small, it follows that for some large enough positive integers $\lambda_1, \cdots, \lambda_d$, the set $M_m$ contains $m-(\lambda_1 u_1 + \cdots + \lambda_d u_d)$ and $w+\ell+(\lambda_1 u_1 + \cdots + \lambda_d u_d) = w + (\ell + \lambda_1 u_1 + \cdots + \lambda_d u_d)$ belongs to $(\bbN u_1 + \cdots+\bbN u_d)+\calW$. Hence 
$$x = m + w+\ell = (m-(\lambda_1 u_1 + \cdots + \lambda_d u_d)) + (w+\ell+(\lambda_1 u_1 + \cdots + \lambda_d u_d))$$
belongs to 
$$M_m+ ((\bbN u_1 + \cdots+\bbN u_d)+\calW)\subseteq M+W.$$ 
So $M+W$ contains all elements of $\bbZ^d$ which are congruent to some elements of $\calM + \calW$ modulo $\calL$. Moreover, $M+W$ contains $C'$, i.e., it contains all elements of $\bbZ^d$ which are congruent to no element of $\calM+ \calW$ modulo $\calL$. Hence $M+W$ is equal to $\bbZ^d$.

Let $M'$ be a subset of $M$ such that $M' + W = \bbZ^d$. Then each element of $M'+(W\setminus \scrW_1)$ is congruent to some element of $\calM+ \calW$ modulo $\calL$. So $M'+(W\setminus \scrW_1)$ and $C'$ has no common element. Thus $M'+\scrW_1$ contains $C'$. Since $M$ is minimal among the subsets of $C$ with respect to the property that $M+\scrW_1$ contains $C'$, we conclude that $M'$ is equal to $M$. Hence $M$ is a minimal additive complement to $W$ in $\bbZ^d$.
\end{proof}

\begin{remark}
Note that the set $M$ constructed in the proof of Theorem \ref{Thm: Implies existence of min comple} satisfies $M= M_\infty$, i.e., $M_\fin = \emptyset$. 
\end{remark}

\begin{corollary}
\label{Cor:ImageIsSubgrExistsMinComp}
Let $W$ be an eventually periodic subset of $\bbZ^d$ with periods $u_1, \cdots, u_d$. Let $\scrW_1$ be as in Theorem \ref{Thm: existence of min complement implies}. Suppose $\scrW_1$ is nonempty and the map $\pi: \calW \cup \scrW_1 \to \bbZ^d/\calL$ is surjective. Then $W$ has a minimal complement in $\bbZ^d$.
\end{corollary}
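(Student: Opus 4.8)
The plan is to deduce this corollary immediately from the sufficient condition of Theorem \ref{Thm: Implies existence of min comple} by exhibiting a particularly simple choice of the auxiliary set $\calM$. Since $\scrW_1$ is assumed nonempty, the only remaining task is to produce a nonempty finite subset $\calM\subseteq \bbZ^d$ satisfying conditions (1)--(3) of that theorem; once such $\calM$ is found, the existence of a minimal complement follows directly. The natural candidate is the singleton $\calM=\{0\}$ (or indeed $\{v\}$ for any fixed $v\in\bbZ^d$), which I expect to satisfy all three conditions essentially for free.

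Concretely, I would first record that $\pi$ is the quotient homomorphism $\bbZ^d\twoheadrightarrow \bbZ^d/\calL$, so it is equivariant under translation: for any $v\in\bbZ^d$ and any subset $X$, one has $\pi(v+X)=\bar v+\pi(X)$, and in particular adding a fixed element permutes the cosets. With $\calM=\{0\}$, condition (1) (injectivity of $\pi$ on $\calM$) is automatic, since a map out of a one-point set is trivially injective. For condition (2), observe that $\calM+(\calW\cup\scrW_1)=\calW\cup\scrW_1$, so $\pi(\calM+(\calW\cup\scrW_1))=\pi(\calW\cup\scrW_1)$, which is all of $\bbZ^d/\calL$ precisely by the surjectivity hypothesis of the corollary.

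Condition (3) is where one must be slightly careful, but it turns out to be vacuous: since $\calM$ is a singleton, $\calM\setminus\{m\}=\emptyset$ for the unique $m\in\calM$, so the requirement ``$m+w\not\equiv m'+w'\modu\calL$ for any $m'\in\calM\setminus\{m\}$'' imposes no constraint. As $\scrW_1$ is nonempty, we may pick any $w\in\scrW_1$ to witness the (empty) condition, exactly as noted in the remark following Theorem \ref{theorem1.1.4}. Thus $\calM=\{0\}$ meets conditions (1)--(3), and since $\scrW_1\neq\emptyset$, Theorem \ref{Thm: Implies existence of min comple} applies to yield a minimal complement of $W$ in $\bbZ^d$.

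I do not anticipate a genuine obstacle here; the content of the corollary is that surjectivity of $\pi$ on $\calW\cup\scrW_1$ already furnishes the simplest possible $\calM$, so the result is really a clean specialization of the sufficient condition rather than an independent argument. The only point worth stating explicitly is the translation-equivariance of $\pi$ used in verifying condition (2), together with the vacuity of condition (3) for singletons.
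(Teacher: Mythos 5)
Your proposal is correct and matches the paper's proof exactly: the paper likewise deduces the corollary from Theorem \ref{Thm: Implies existence of min comple} by taking $\calM$ to be any singleton subset of $\bbZ^d$, with condition (3) vacuous for singletons. You have simply spelled out the routine verifications in more detail.
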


\begin{proof}
It follows from Theorem \ref{Thm: Implies existence of min comple} by taking any singleton subset of $\bbZ^d$ as $\calM$.
\end{proof}

\begin{definition}
\label{Defn:Min comp finite}
An ordered pair $(\scrQ_1, \calQ)$ of two nonempty disjoint subsets $\scrQ_1, \calQ$ of $\bbZ^d/\calL$ is said to have a \textnormal{minimal complement} in $\bbZ^d/\calL$ if there exists a nonempty subset $\calN$ of $\bbZ^d/\calL$ such that the following conditions hold. 
\begin{enumerate}
\item The set $\calN + (\calQ \cup \scrQ_1)$ is equal to $\bbZ^d/\calL$.
\item For any $n\in \calN$, there exists $q\in \scrQ_1$ such that $n+q \neq n ' + q'$ for any $n'\in \calN\setminus\{n\}, q'\in \calQ \cup \scrQ_1$. 
\end{enumerate}
\end{definition}

\begin{remark}
Note that if the set $\calN$ as above contains only one element, then the second condition is vacuously true. 
\end{remark}

\begin{lemma}
If $W$ is an eventually periodic subset of $\bbZ^d$ with periods $u_1, \cdots, u_d$ such that the ordered pair $(\pi(\scrW_1), \pi(\calW))$ admits a minimal complement in $\bbZ^d/\calL$, then $W$ has a minimal complement in $\bbZ^d$. 
\end{lemma}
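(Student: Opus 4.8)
The plan is to reduce the lemma directly to the sufficient condition already established in Theorem \ref{Thm: Implies existence of min comple}. The hypothesis gives us a nonempty subset $\calN$ of $\bbZ^d/\calL$ witnessing that the ordered pair $(\pi(\scrW_1), \pi(\calW))$ has a minimal complement in the quotient, in the sense of Definition \ref{Defn:Min comp finite}. The idea is to lift $\calN$ to a subset $\calM$ of $\bbZ^d$ and verify that $\calM$ satisfies the three conditions (injectivity of $\pi$ on $\calM$, surjectivity of $\pi$ on $\calM + (\calW \cup \scrW_1)$, and the separation condition (3)) required by Theorem \ref{Thm: Implies existence of min comple}.

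Concretely, first I would choose for each $\bar n \in \calN$ a single lift $m \in \bbZ^d$ with $\pi(m) = \bar n$, and let $\calM$ denote the resulting finite set of lifts. Since distinct elements of $\calN$ are distinct in $\bbZ^d/\calL$ and we picked exactly one lift per element, the restriction $\pi\colon \calM \to \bbZ^d/\calL$ is injective, giving condition (1). For condition (2), observe that $\pi(\calM + (\calW \cup \scrW_1)) = \calN + (\pi(\calW) \cup \pi(\scrW_1))$ in $\bbZ^d/\calL$, because $\pi$ is a group homomorphism and $\pi(\calM) = \calN$; by condition (1) of Definition \ref{Defn:Min comp finite} this equals $\calN + (\calQ \cup \scrQ_1) = \bbZ^d/\calL$ (writing $\calQ = \pi(\calW)$, $\scrQ_1 = \pi(\scrW_1)$), so the map is surjective.

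The one point requiring a little care is condition (3) of Theorem \ref{Thm: Implies existence of min comple}, which quantifies over $w' \in \calW \cup \scrW_1$ rather than over classes. Fix $m \in \calM$ with $\pi(m) = \bar n$. Condition (2) of Definition \ref{Defn:Min comp finite} furnishes $\bar q \in \scrQ_1 = \pi(\scrW_1)$ with $\bar n + \bar q \neq \bar n' + \bar q'$ for every $\bar n' \in \calN \setminus \{\bar n\}$ and every $\bar q' \in \calQ \cup \scrQ_1$. Choose any $w \in \scrW_1$ with $\pi(w) = \bar q$. Then for any $m' \in \calM \setminus \{m\}$ and any $w' \in \calW \cup \scrW_1$, applying $\pi$ gives $\pi(m + w) = \bar n + \bar q$ and $\pi(m' + w') = \pi(m') + \pi(w')$, where $\pi(m') \in \calN \setminus \{\bar n\}$ (by injectivity from condition (1)) and $\pi(w') \in \calQ \cup \scrQ_1$; hence $\pi(m + w) \neq \pi(m' + w')$, which is exactly the assertion $m + w \not\equiv m' + w' \modu \calL$.

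With all three hypotheses of Theorem \ref{Thm: Implies existence of min comple} verified (together with the standing assumption that $\scrW_1$ is nonempty, which follows since $\scrQ_1 = \pi(\scrW_1)$ is nonempty by Definition \ref{Defn:Min comp finite}), that theorem immediately yields a minimal complement of $W$ in $\bbZ^d$, completing the proof. I do not expect any serious obstacle here; the only subtlety is keeping straight the distinction between the condition stated at the level of elements in Theorem \ref{Thm: Implies existence of min comple} and the condition stated at the level of cosets in Definition \ref{Defn:Min comp finite}, and the fact that applying the homomorphism $\pi$ transparently bridges the two, with the finiteness of $\calN$ ensuring $\calM$ is finite.
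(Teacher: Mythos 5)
Your proposal is correct and is exactly the paper's approach: the paper's proof of this lemma is the one-line reduction ``It follows from Theorem \ref{Thm: Implies existence of min comple}'', and your argument simply spells out the lifting of $\calN$ to $\calM$ and the verification of conditions (1)--(3) that the authors leave implicit. All three verifications, including the coset-versus-element bookkeeping in condition (3), are carried out correctly.
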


\begin{proof}
It follows from Theorem \ref{Thm: Implies existence of min comple}.
\end{proof}

\begin{corollary}
\label{Cor:Even Perio has min comp}
Suppose any pair of two nonempty disjoint subsets of $\bbZ^d/\calL$ has a minimal complement in $\bbZ^d/\calL$. Then any eventually periodic subset $W$ of $\bbZ^d$ with periods $u_1, \cdots, u_d$ has a minimal complement in $\bbZ^d$ if it is not periodic. 
\end{corollary}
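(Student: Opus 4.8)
The plan is to deduce the corollary from the Lemma immediately preceding it, which guarantees a minimal complement of $W$ in $\bbZ^d$ as soon as the ordered pair $(\pi(\scrW_1), \pi(\calW))$ admits a minimal complement in $\bbZ^d/\calL$ in the sense of Definition \ref{Defn:Min comp finite}. So the whole problem reduces to verifying that $(\pi(\scrW_1), \pi(\calW))$ is a genuine ordered pair of two nonempty disjoint subsets of $\bbZ^d/\calL$: once this is known, the standing hypothesis of the corollary applies to this particular pair, yielding a minimal complement in $\bbZ^d/\calL$, and the preceding Lemma then transports it to a minimal complement of $W$.

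First I would clear the two routine conditions. By Theorem \ref{Thm: Structure of Eventually Periodic}, $\pi(\calW) = \calQ$, and $\calQ$ is nonempty because $W\setminus\scrW$ is infinite; hence $\pi(\calW)\neq\emptyset$. Disjointness is immediate from the very definition of $\scrW_1$: each element of $\scrW_1$ is congruent modulo $\calL$ to no element of $\calW$, so $\pi(\scrW_1)\cap\pi(\calW) = \pi(\scrW_1)\cap\calQ = \emptyset$. Thus the pair is automatically disjoint with nonempty second coordinate, and everything hinges on the first coordinate.

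The crux, and the step I expect to be the main obstacle, is to show $\scrW_1\neq\emptyset$, and this is exactly where non-periodicity must enter. By the structure theory, $W$ is periodic with periods $u_1,\cdots,u_d$ precisely when $\scrW=\emptyset$ (a periodic set absorbs the full cone $\bbN u_1 + \cdots + \bbN u_d$ at every one of its points), so the hypothesis that $W$ is not periodic gives $\scrW\neq\emptyset$. The delicate part is upgrading $\scrW\neq\emptyset$ to $\scrW_1\neq\emptyset$: I would examine the fibres of $\pi$, noting that each class $\bar v\in\calQ$ carries a genuine periodic tail $\calW_{\bar v}+(\bbN u_1 + \cdots + \bbN u_d)$ while the sporadic mass of $W$ sits in $\scrW=\scrW_0\sqcup\scrW_1$, and I would try to locate a sporadic point in a fibre lying outside $\calQ$. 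This is precisely the point where one must rule out the degenerate configuration $\scrW=\scrW_0$, in which every sporadic point is congruent to an element of $\calW$; since $\scrW\neq\emptyset$ on its own does not obviously force $\scrW_1\neq\emptyset$, I expect this step to require a careful argument (or an additional input guaranteeing that some entirely finite, non-periodic fibre of $W$ contributes a class outside $\calQ$), and it is the place where the proof could genuinely break down.

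Granting $\scrW_1\neq\emptyset$, the pair $(\pi(\scrW_1),\pi(\calW))$ is an ordered pair of two nonempty disjoint subsets of $\bbZ^d/\calL$; the hypothesis then furnishes a minimal complement of this pair in $\bbZ^d/\calL$, and the preceding Lemma converts it into a minimal complement of $W$ in $\bbZ^d$, finishing the proof.
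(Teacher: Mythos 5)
Your reduction is exactly the paper's: the printed proof of Corollary \ref{Cor:Even Perio has min comp} is the single sentence ``It follows from the above Lemma,'' and your verification that $\pi(\calW)=\calQ$ is nonempty and that $\pi(\scrW_1)\cap\pi(\calW)=\emptyset$ holds by the very definition of $\scrW_1$ is correct. The step you flagged as the likely breaking point is, however, a genuine gap, and it cannot be closed: non-periodicity yields only $\scrW\neq\emptyset$, and this does not force $\scrW_1\neq\emptyset$. Concretely, take $d=2$, $u_1=(2,0)$, $u_2=(0,2)$, and
$$W=\{(0,0)\}\sqcup\bigl(\{(2,2)\}+\bbN(2,0)+\bbN(0,2)\bigr).$$
This $W$ is eventually periodic with periods $(2,0),(0,2)$ and is not periodic: any admissible periods $v_1,v_2$ would have to lie in $W\setminus\{(0,0)\}$ (apply the periodicity condition at $(0,0)$), hence have both coordinates at least $2$, and then the points $(2,2+2n)\in W$, whose first coordinate is bounded while the second is unbounded, cannot all lie in $F+\bbN v_1+\bbN v_2$ for a finite $F$. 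Here $\scrW=\{(0,0)\}$, $\calW=\{(2,2)\}$, and $(0,0)\equiv(2,2)\modu\calL$, so $\scrW=\scrW_0$ and $\scrW_1=\emptyset$.

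For this $W$ the conclusion of the corollary actually fails: by Theorem \ref{Thm: existence of min complement implies}(1), a minimal complement would force $\scrW_1\neq\emptyset$, so $W$ has no minimal complement in $\bbZ^2$, even though the standing hypothesis of the corollary holds for $\bbZ^2/\calL\cong(\bbZ/2\bbZ)\times(\bbZ/2\bbZ)$ (this is precisely what the case analysis in the proof of Proposition \ref{Prop:MinComp of even perio mod 2} verifies, so that proposition is contradicted by the same example). Thus the statement is not merely incompletely proved but false as written; the hypothesis one actually needs is $\scrW_1\neq\emptyset$, which already implies non-periodicity and is exactly the hypothesis carried explicitly by Theorem \ref{Thm: Implies existence of min comple}, Corollary \ref{Cor:ImageIsSubgrExistsMinComp} and Proposition \ref{Prop:MinComp Infinite example}. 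In short, your instinct about where the argument breaks was right, and the obstruction lies in the statement itself rather than in your handling of it.
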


\begin{proof}
It follows from the above Lemma.
\end{proof}

\begin{proposition}
\label{Prop:MinComp of even perio mod 2}
Any eventually periodic subset $W$ of $\bbZ^2$ with periods $(2,0), (0,2)$ has a minimal complement in $\bbZ^2$ if it is not periodic. 
\end{proposition}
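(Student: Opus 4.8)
The plan is to deduce the statement from Corollary \ref{Cor:Even Perio has min comp}. By that corollary, it suffices to prove the purely combinatorial assertion that every ordered pair $(\scrQ_1, \calQ)$ of two nonempty disjoint subsets of $\bbZ^2/\calL$ admits a minimal complement in $\bbZ^2/\calL$ in the sense of Definition \ref{Defn:Min comp finite}. Here $\calL = \bbZ(2,0) + \bbZ(0,2) = (2\bbZ)^2$, so the quotient $G := \bbZ^2/\calL$ is isomorphic to $(\bbZ/2\bbZ)^2$, a group of order $4$ written additively in which every element is its own inverse. The whole problem thus becomes a finite check, and the main work is to exhibit a witnessing set $\calN$ for each of the finitely many pairs.

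Writing $Q := \scrQ_1 \cup \calQ$, disjointness and nonemptiness give $2 \leqslant |Q| \leqslant 4$, and I would organize the argument by the three possible values of $|Q|$. When $|Q| = 4$ one has $Q = G$, and the singleton $\calN = \{0\}$ works: condition (1) of Definition \ref{Defn:Min comp finite} holds since $\calN + Q = G$, and condition (2) is vacuous for a singleton by the remark following that definition. When $|Q| = 2$, say $\scrQ_1 = \{s\}$ and $\calQ = \{t\}$, the set $Q = \{s,t\}$ is a coset of the order-$2$ subgroup $H = \{0, s+t\}$; choosing any $g \notin H$ and setting $\calN = \{0, g\}$ makes $\calN + Q$ the union of the two $H$-cosets, hence all of $G$, and condition (2) is checked directly using $2x = 0$.

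The delicate case is $|Q| = 3$, where $G \setminus Q = \{g_0\}$ is a single element and $\calN$ must both cover $g_0$ and supply a witness drawn from $\scrQ_1$. Here I would fix $s \in \scrQ_1$, set $a := g_0 + s$ (noting $a \neq 0$, since $s \in Q$ but $g_0 \notin Q$), and take $\calN = \{0, a\}$. Covering holds because $a + Q = G \setminus \{s\}$ while $Q = G \setminus \{g_0\}$, and these exhaust $G$ as $g_0 \neq s$; for condition (2) the witness $q = s$ serves for both $n = 0$ and $n = a$, the key identities being $a + g_0 = s$ and $a + s = g_0$ in characteristic $2$.

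The main obstacle throughout is not the covering condition (1), which is immediate, but the minimality-witnessing condition (2): one must arrange $\calN$ so that for each of its elements some translate by an element of $\scrQ_1$ avoids every other sum $n' + q'$ with $n' \in \calN\setminus\{n\}$ and $q' \in Q$. This is exactly what forces the use of $\scrQ_1$ (rather than arbitrary elements of $Q$) in selecting the auxiliary point $a$ in the case $|Q| = 3$, and it is the only place where the characteristic-$2$ structure of $(\bbZ/2\bbZ)^2$ is genuinely exploited. Once all three cases are settled, the hypothesis of Corollary \ref{Cor:Even Perio has min comp} is verified for $\bbZ^2/\calL$, and the Proposition follows at once.
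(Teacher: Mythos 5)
Your proof is correct, and the overall strategy coincides with the paper's: both reduce the statement, via Corollary \ref{Cor:Even Perio has min comp}, to the finite combinatorial assertion that every pair of nonempty disjoint subsets of $\bbZ^2/\calL \cong (\bbZ/2\bbZ)^2$ admits a minimal complement in the sense of Definition \ref{Defn:Min comp finite}. Where you genuinely diverge is in how that finite check is carried out. The paper normalizes so that $(0,0)\in\scrQ_1$, enumerates the twelve remaining possibilities for $(\scrQ_1,\calQ)$ in a table, exhibits an explicit two-element $\calN$ for each row, and always takes the witness $q=(0,0)$. You instead give a uniform construction organized by $|\scrQ_1\cup\calQ|\in\{2,3,4\}$: for $|Q|=2$ you use that $Q$ is a coset of the order-two subgroup $\{0,s+t\}$, and for $|Q|=3$ you take $\calN=\{0,g_0+s\}$ where $g_0$ is the missing element and $s\in\scrQ_1$; I checked conditions (1) and (2) of Definition \ref{Defn:Min comp finite} in each case and they hold (in particular $a+s=g_0\notin Q$ and $a+q'=s$ forces $q'=g_0\notin Q$, so $q=s$ witnesses condition (2) for both elements of $\calN$). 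Your argument is the more conceptual of the two and in fact anticipates the paper's own Lemma \ref{Remark: translation of ordered pair}, whose parts (2) and (3)(b) are essentially your $|Q|=2$ and $|Q|=3$ cases stated for arbitrary products of copies of $\bbZ/2\bbZ$; the paper proves that lemma separately afterwards to obtain Proposition \ref{Prop:MinComp Infinite example}. The table buys only immediacy of verification; your route buys generality and avoids the exhaustive enumeration.
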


To avoid cumbersome notation, the elements of $\bbZ^2/(\bbZ (2,0) + \bbZ (0,2))$ are denoted by $(a,b)$ in the following proof, in stead of denoting them by $\overline{(a,b)}$.

\begin{proof}
By Corollary \ref{Cor:Even Perio has min comp}, it suffices to show that any pair $(\scrQ_1, \calQ)$ of two nonempty disjoint subsets of $\bbZ^2/(\bbZ (2,0) + \bbZ (0,2))$ admits a minimal complement. It is immediate if $\scrQ_1 \cup \calQ$ is equal to $\bbZ^2/(\bbZ (2,0) + \bbZ (0,2))$ ($\calN$ can be taken to be $\{(0,0)\}$, for instance). Note that we may (and do) assume that $\scrQ_1$ contains $\{(0,0)\}$. Thus it remains to consider the case when $\scrQ_1 \cup \calQ$ is not equal to $\bbZ^2/(\bbZ (2,0) + \bbZ (0,2))$ and $\scrQ_1$ contains $\{(0,0)\}$. There are several possibilities for the pair $(\scrQ_1, \calQ)$, which are listed in Table \ref{Table}. For each of them, $\calN$ can be taken as indicated in Table \ref{Table} and for each $n\in \calN$, we could pick the element $q = (0,0)$ of $\scrQ_1$. This completes the proof.
\begin{table}
\begin{tabular}{ccc}
\hline
$\scrQ_1$ & $\calQ$ & $\calN$ \\
\hline \hline
$\{(0,0)\}$ & $\{(1,0)\}$ & $\{(0,0), (0, 1)\}$ \\
$\{(0,0)\}$ & $\{(0,1)\}$ & $\{(0,0), (1, 0)\}$ \\
$\{(0,0)\}$ & $\{(1,1)\}$ & $\{(0,0), (0, 1)\}$ \\
\hline
$\{(0,0)\}$ & $\{(1, 0), (0,1)\} $ & $\{(0,0), (1, 1)\}$ \\
$\{(0,0)\}$ & $\{(1, 0), (1,1)\} $ & $\{(0,0), (0, 1)\}$ \\
$\{(0,0)\}$ & $\{(0, 1), (1,1)\} $ & $\{(0,0), (1, 0)\}$ \\
\hline
$\{(0,0), (1,0)\}$ & $\{(0,1)\} $ & $\{(0,0), (1, 1)\}$ \\
$\{(0,0), (1,0)\}$ & $\{(1,1)\} $ & $\{(0,0), (0, 1)\}$ \\
$\{(0,0), (0,1)\}$ & $\{(1, 0)\} $ & $\{(0,0), (1, 1)\}$ \\
$\{(0,0), (0,1)\}$ & $\{(1, 1)\} $ & $\{(0,0), (1, 0)\}$ \\
$\{(0,0), (1,1)\}$ & $\{(1, 0)\} $ & $\{(0,0), (0, 1)\}$ \\
$\{(0,0), (1,1)\}$ & $\{(0, 1)\} $ & $\{(0,0), (1, 0)\}$ \\
\hline
\end{tabular}
\\[10pt]
\caption{Proposition \ref{Prop:MinComp of even perio mod 2}}
\label{Table}
\end{table}
\end{proof}

The above proposition suggests Proposition \ref{Prop:MinComp Infinite example}, which we prove using the following lemma.

\begin{lemma}
\label{Remark: translation of ordered pair}
Let $(\scrQ_1, \calQ)$ be an ordered pair of two nonempty disjoint subsets of $\bbZ^d/\calL$. Then the following statements hold. 
\begin{enumerate}
\item The pair $(\scrQ_1, \calQ)$ admits a minimal complement if and only if $(\bar v + \scrQ_1, \bar v + \calQ)$ has a minimal complement for any $\bar v \in \bbZ^d/\calL$.
\item If the union $\scrQ_1 \cup \calQ$ is equal to a translate of a subgroup of $\bbZ^d/\calL$, then $(\scrQ_1, \calQ)$ has a minimal complement in $\bbZ^d/\calL$.
\item The pair $(\scrQ_1, \calQ)$ admits a minimal complement if any nontrivial element of the quotient $\bbZ^d/\calL$ has order two (equivalently, $\bbZ^d/\calL$ is isomorphic to a product of copies of $\bbZ/2\bbZ$) and either of the following two conditions hold. 
\begin{enumerate}
\item Each of $\scrQ_1, \calQ$ is a singleton set.
\item The complement of $\scrQ_1 \cup \calQ$ in $\bbZ^d/\calL$ is a singleton set.
\end{enumerate}
\end{enumerate}
\end{lemma}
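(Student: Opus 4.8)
The plan is to establish the three parts in order, since (3) will build on (1) and (2). For part (1), I would prove the sharper statement that a fixed subset $\calN\subseteq\bbZ^d/\calL$ is a minimal complement of $(\scrQ_1,\calQ)$ if and only if it is a minimal complement of $(\bar v+\scrQ_1,\bar v+\calQ)$; the claimed equivalence then follows because the two pairs acquire minimal complements simultaneously. The verification is a direct translation computation. Condition (1) of Definition \ref{Defn:Min comp finite} transfers because $\calN+((\bar v+\scrQ_1)\cup(\bar v+\calQ))=\bar v+(\calN+(\scrQ_1\cup\calQ))$ and translation by $\bar v$ permutes $\bbZ^d/\calL$; condition (2) transfers because $n+(\bar v+q)=n'+(\bar v+q')$ holds exactly when $n+q=n'+q'$, so the witness $q$ is simply replaced by $\bar v+q$. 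This part should be routine.

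For part (2), I would first use part (1) to translate the pair so that $\scrQ_1\cup\calQ$ becomes an honest subgroup $H$ of $\bbZ^d/\calL$ rather than merely a coset. Then I would take $\calN$ to be a full set of coset representatives of $H$, one element per coset. Condition (1) is immediate since $\calN+H=\bbZ^d/\calL$. Condition (2) holds almost for free: for $n\in\calN$ pick any $q\in\scrQ_1$ (nonempty), and note that $n+q$ lies in $n+H$ whereas $n'+q'$ lies in $n'+H$ for every $n'\in\calN\setminus\{n\}$ and $q'\in H$; distinct transversal elements sit in distinct, hence disjoint, cosets, so $n+q\neq n'+q'$ automatically.

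For part (3), sub-case (3a) reduces at once to part (2): writing $\scrQ_1=\{a\}$, $\calQ=\{b\}$ with $a\neq b$, we have $\scrQ_1\cup\calQ=a+\{0,b-a\}$, and since every nonzero element of $\bbZ^d/\calL$ has order two, $\{0,b-a\}$ is a subgroup, so $\scrQ_1\cup\calQ$ is a translate of a subgroup. The substantive case is (3b), where $\scrQ_1\cup\calQ$ omits exactly one element $z$. Using part (1) I would translate so that $z=0$, i.e. $\scrQ_1\cup\calQ=(\bbZ^d/\calL)\setminus\{0\}$, and choose any $s\in\scrQ_1$ (necessarily $s\neq0$). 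I expect $\calN=\{0,s\}$ to be a minimal complement.

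I anticipate (3b) to be the main obstacle, and it is exactly where the characteristic-two hypothesis is indispensable. A quick count already pins down the size of $\calN$: condition (1) forces $|\calN|\geq2$ (a single element $n_0$ only yields $(\bbZ^d/\calL)\setminus\{n_0\}$), while for a co-singleton union condition (2) forces $|\calN|\leq2$, since to avoid a collision a witness $q$ would have to satisfy $n+q=n'$ for every $n'\in\calN\setminus\{n\}$, and one $q$ cannot equal two distinct values $n'-n$. Thus $|\calN|=2$, say $\calN=\{n_1,n_2\}$, and condition (2) reduces to the single requirement $n_1+n_2\in\scrQ_1$. This is achievable precisely because in an elementary abelian $2$-group $n_2-n_1$ and $n_1-n_2$ coincide; with $\calN=\{0,s\}$ one gets $n_1+n_2=s\in\scrQ_1$, and a short check confirms both conditions of Definition \ref{Defn:Min comp finite} with the common witness $q=s$.
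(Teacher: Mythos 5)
Your proposal is correct and follows essentially the same route as the paper: part (1) by direct translation (the same $\calN$ works for both pairs), part (2) by normalizing to a subgroup and taking a transversal, (3a) by reduction to (2), and (3b) by a two-element set $\calN$ containing $0$ whose verification uses the order-two hypothesis. The only (cosmetic) difference is in (3b), where you translate the missing element to $0$ and use the witness $q=s\in\scrQ_1$, while the paper translates so that $0\in\scrQ_1$, takes $\calN=\{0,\bar v\}$ with $\bar v$ the missing element, and uses the witness $q=0$; your extra observation that $|\calN|=2$ is forced is correct but not needed.
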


\begin{proof}
The first statement follows from the definition of minimal complements.

To establish the second statement, it follows from part (1) that it is enough to consider the case when the union $\scrQ_1 \cup \calQ$ is equal to a subgroup $H$ of $\bbZ^d/\calL$. Let $\calN$ denote a set of distinct coset representatives of $H$ in $\bbZ^d/\calL$. Then the conditions (1) and (2) of Definition \ref{Defn:Min comp finite} hold ($q$ can be taken to be any element of $\scrQ_1$). Hence $(\scrQ_1, \calQ)$ has a minimal complement in $\bbZ^d/\calL$.

Suppose any nontrivial element of the quotient $\bbZ^d/\calL$ has order two. By part (1), we may (and do) assume that $\scrQ_1$ contains $(0, \cdots, 0)$. If each of $\scrQ_1, \calQ$ is a singleton set, then $\calQ$ contains a nontrivial element of $\bbZ^d/\calL$ of order two, and consequently, $\scrQ_1\cup \calQ$ is a subgroup of $\bbZ^d/\calL$ and hence $(\scrQ_1, \calQ)$ admits a minimal complement by part (2). Now, assume that the complement of $\scrQ_1 \cup \calQ$ in $\bbZ^d/\calL$ is a singleton set. Let $\bar v$ denote the unique element of $\bbZ^d/\calL$ which does not lie in $\scrQ_1 \cup \calQ$. Note that $\{(0,\cdots, 0), \bar v\} +  (\scrQ_1 \cup \calQ)$ is equal to $\bbZ^d/\calL$. Moreover, for any $q'\in \calQ \cup \scrQ_1$, we have $(0,\cdots, 0) + (0,\cdots, 0) \neq \bar v + q'$ (otherwise, $\bar v$ being an element of order two, would be equal to $q'$, which is a contradiction to the choice of $\bar v$), and $\bar v + (0,\cdots, 0)\neq (0,\cdots, 0) + q'$ (since $\bar v$ does not belong to $\calQ \cup \scrQ_1$). So $(\scrQ_1, \calQ)$ admits a minimal complement.
\end{proof}

\begin{proposition}
\label{Prop:MinComp Infinite example}
Let $W$ be an eventually periodic subset of $\bbZ^d$ with periods $u_1, \cdots, u_d$. Suppose $\scrW_1$ is nonempty. Then $W$ has a minimal complement in $\bbZ^d$ if either of the following conditions hold. 
\begin{enumerate}
\item The image of the map $\pi: \calW \cup \scrW_1 \to \bbZ^d/\calL$ is a translate of a subgroup of $\bbZ^d/\calL$.
\item Any nontrivial element of the quotient $\bbZ^d/\calL$ has order two\footnote{Equivalently, $\bbZ^d/\calL$ is isomorphic to a product of copies of $\bbZ/2\bbZ$.} and any one of the following conditions hold. 
\begin{enumerate}
\item Each of $\pi(\scrW_1), \pi(\calW)$ is a singleton set.
\item The complement of $\pi(\scrW_1 \cup \calW)$ in $\bbZ^d/\calL$ is a singleton set.
\end{enumerate}
\end{enumerate}
\end{proposition}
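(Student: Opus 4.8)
The plan is to read this off from Lemma~\ref{Remark: translation of ordered pair} together with the (unnumbered) Lemma immediately preceding Corollary~\ref{Cor:Even Perio has min comp}, the latter of which reduces the existence of a minimal complement of $W$ in $\bbZ^d$ to the existence of a minimal complement of the ordered pair $(\pi(\scrW_1), \pi(\calW))$ in $\bbZ^d/\calL$. Accordingly, the first step is to verify that $(\pi(\scrW_1), \pi(\calW))$ is a legitimate object in the sense of Definition~\ref{Defn:Min comp finite}, i.e.\ an ordered pair of two nonempty disjoint subsets of $\bbZ^d/\calL$. Disjointness is built into the definition of $\scrW_1$ in Theorem~\ref{Thm: existence of min complement implies}: every element of $\scrW_1$ is congruent modulo $\calL$ to no element of $\calW$, so $\pi(\scrW_1)\cap \pi(\calW) = \emptyset$. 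Nonemptiness of $\pi(\scrW_1)$ is the standing hypothesis $\scrW_1\neq \emptyset$, while $\pi(\calW)\neq\emptyset$ holds because $W\setminus \scrW$ is a nonempty periodic set, whence $\calW\neq\emptyset$ by Theorem~\ref{Thm: Structure of Eventually Periodic}.

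Once this is in place, each of the three listed cases is matched to the corresponding part of Lemma~\ref{Remark: translation of ordered pair} applied to the pair $(\scrQ_1,\calQ) = (\pi(\scrW_1),\pi(\calW))$. For condition (1), note that $\pi(\calW\cup\scrW_1) = \pi(\scrW_1)\cup\pi(\calW)$, so the hypothesis says exactly that $\scrQ_1\cup\calQ$ is a translate of a subgroup, and part (2) of Lemma~\ref{Remark: translation of ordered pair} furnishes a minimal complement of the pair. For condition (2)(a), the order-two hypothesis on $\bbZ^d/\calL$ together with $\pi(\scrW_1),\pi(\calW)$ being singletons is precisely the setting of part (3)(a). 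For condition (2)(b), since $\pi(\scrW_1\cup\calW) = \pi(\scrW_1)\cup\pi(\calW)$, the hypothesis that its complement in $\bbZ^d/\calL$ is a singleton, again under the order-two assumption, is the setting of part (3)(b). In each case Lemma~\ref{Remark: translation of ordered pair} produces a minimal complement of $(\pi(\scrW_1),\pi(\calW))$, and the connecting Lemma then lifts it to a minimal complement of $W$ in $\bbZ^d$.

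The main obstacle here is not any new combinatorics but ensuring that the reduction is applicable: the entire argument hinges on $(\pi(\scrW_1),\pi(\calW))$ being an admissible ordered pair, so the one point requiring genuine care is the disjointness and nonemptiness check carried out in the first paragraph. After that, the proof is a triple invocation of Lemma~\ref{Remark: translation of ordered pair}, with the substantive work already discharged in Theorem~\ref{Thm: Implies existence of min comple} (on which the connecting Lemma rests) and in the case analysis internal to Lemma~\ref{Remark: translation of ordered pair} itself.
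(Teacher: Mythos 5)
Your proposal is correct and follows essentially the same route as the paper, which proves this proposition by combining Theorem \ref{Thm: Implies existence of min comple} (via the connecting Lemma) with the case analysis in Lemma \ref{Remark: translation of ordered pair}. The only difference is that you make explicit the admissibility check that $(\pi(\scrW_1),\pi(\calW))$ is a pair of nonempty disjoint subsets of $\bbZ^d/\calL$, which the paper leaves implicit; this is a worthwhile addition but not a change of method.
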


\begin{proof}
It follows from Theorem \ref{Thm: Implies existence of min comple} and Lemma \ref{Remark: translation of ordered pair}.
\end{proof}

Using the above Proposition, we write down infinitely many examples of infinite subsets of $\bbZ^d$ admitting minimal complements. 

\begin{example}
\label{Example infinite}
Let $e_i$ denote the vector $(0, \cdots, 1, \cdots, 0)$ in $\bbZ^d$ whose $i$-th coordinate is equal to one and the remaining coordinates are equal to zero. 
\begin{enumerate}
\item For any positive integer $k\geqslant 1$ and $1\leqslant i\leqslant d$, the eventually periodic set 
$$
\{(0,\cdots, 0)\} \sqcup (\{ e_i, 2e_i, \cdots, (k-1)e_i\} + (\bbN (k e_1) + \cdots + \bbN (k e_d)))$$
has a minimal complement in $\bbZ^d$ by Proposition \ref{Prop:MinComp Infinite example}(1). 

\item For any $1\leqslant i\leqslant d$, the eventually periodic set 
$$
\{(0,\cdots, 0)\} \sqcup (\{ e_i\} + (\bbN (2 e_1) + \cdots + \bbN (2 e_d)))$$
has a minimal complement in $\bbZ^d$ by Proposition \ref{Prop:MinComp Infinite example}(2(a)). 

\item 
Let $d\geqslant 2$ and $F$ be a subset of $\bbZ^d$ containing $2^d-2$ elements such that its image under $\bbZ^d \to \bbZ^d/(\bbZ (2 e_1) + \cdots + \bbZ (2 e_d))$ also contains $2^d-2$ elements. Then 
$$
\{(0,\cdots, 0)\} \sqcup (F+ (\bbN (2 e_1) + \cdots + \bbN (2 e_d)))$$
has a minimal complement in $\bbZ^d$ by Proposition \ref{Prop:MinComp Infinite example}(2(b)). 
\end{enumerate}

\end{example}

\subsection{Equivalent condition}

\begin{theorem}
\label{Thm: Even peri 1 has min comp iff}
Let $W$ be an eventually periodic subset of $\bbZ^d$ with periods $u_1, \cdots, u_d$. Let $\scrW_1$ be as in Theorem \ref{Thm: existence of min complement implies}. Suppose $\scrW_1$ contains only one element. Then $W$ has a minimal complement in $\bbZ^d$ if and only if there exists nonempty finite subset $\calM$ of $\bbZ^d$ satisfying the following. 
\begin{enumerate}
\item The map $\pi: \calM\to \bbZ^d/\calL$ is injective.
\item The map $\pi: (\calM+ \calW\cup \scrW_1)) \to \bbZ^d/\calL$ is surjective.
\item For any $m\in \calM$, there exists $w\in \scrW_1$ such that $m + w \not\equiv m' + w' \modu \calL$ for any $m'\in \calM\setminus\{m\}, w'\in \calW\cup \scrW_1$.
\end{enumerate}
\end{theorem}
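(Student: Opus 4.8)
The claim is an ``if and only if'' statement for the special case $|\scrW_1| = 1$, and the strategy is to show that in this case the necessary condition of Theorem \ref{Thm: existence of min complement implies} and the sufficient condition of Theorem \ref{Thm: Implies existence of min comple} coincide. The sufficiency direction is immediate: the three hypotheses listed are exactly the hypotheses of Theorem \ref{Thm: Implies existence of min comple}, so if such an $\calM$ exists, then $W$ has a minimal complement in $\bbZ^d$ and there is nothing further to prove. Thus the entire content of the theorem lies in the necessity direction.

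For necessity, I would start from the assumption that $W$ has a minimal complement $M$ in $\bbZ^d$ and invoke Theorem \ref{Thm: existence of min complement implies} to produce a nonempty finite set $\calM$ (built from $M_\infty$) satisfying conditions (1) and (2) verbatim, together with the weaker version of condition (3), namely: for any $m\in \calM$ there exists $w\in \scrW_1$ with $m + w \not\equiv m' + w' \pmod{\calL}$ for any $m'\in \calM$, $w'\in \calW$. The gap between what Theorem \ref{Thm: existence of min complement implies} delivers and what the present theorem demands is precisely that here the nonresidue condition must hold with $m'$ ranging over $\calM\setminus\{m\}$ and $w'$ ranging over the \emph{larger} set $\calW\cup\scrW_1$, rather than $m'\in\calM$, $w'\in\calW$.

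The key point that closes this gap is the hypothesis $|\scrW_1| = 1$; write $\scrW_1 = \{w_0\}$. Then in the condition produced by Theorem \ref{Thm: existence of min complement implies} the element $w$ must equal $w_0$, so the statement becomes: for each $m\in\calM$, $m + w_0 \not\equiv m' + w' \pmod{\calL}$ for all $m'\in\calM$, $w'\in\calW$. I must upgrade this to allow $w'\in\calW\cup\{w_0\}$ while restricting $m'$ to $\calM\setminus\{m\}$. Taking $w = w_0$ again, the only genuinely new relations to rule out are those with $w' = w_0$, i.e. I must show $m + w_0 \not\equiv m' + w_0 \pmod{\calL}$ for $m'\in\calM\setminus\{m\}$; but this is equivalent to $m\not\equiv m'\pmod\calL$, which is exactly the injectivity of $\pi$ on $\calM$ from condition (1). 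The relations with $w'\in\calW$ and $m'\in\calM\setminus\{m\}$ are a subset of those already excluded by the weaker condition (taking $m' \in \calM$). Hence condition (3) in its strong form follows, and necessity is established.

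The main obstacle I anticipate is purely bookkeeping rather than conceptual: one must carefully track that the set $\calM$ extracted from Theorem \ref{Thm: existence of min complement implies} is genuinely a finite subset of $\bbZ^d$ on which $\pi$ is injective (it is, being a transversal of $\pi(M_\infty)$), and then verify that the single-element hypothesis on $\scrW_1$ forces the choice $w = w_0$ uniformly, so that the two clauses ``$w'\in\calW$, $m'\in\calM$'' and ``$w' = w_0$, $m'\in\calM\setminus\{m\}$'' together cover exactly the strengthened condition (3). No new combinatorial construction is needed beyond the ones already supplied by the two cited theorems.
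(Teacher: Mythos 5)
Your proposal is correct and follows essentially the same route as the paper's own proof: sufficiency is quoted directly from Theorem \ref{Thm: Implies existence of min comple}, and necessity applies Theorem \ref{Thm: existence of min complement implies} to a minimal complement, obtains injectivity of $\pi$ on $\calM$ from the transversal property of $\calM$ in $M_\infty$, and upgrades the weaker nonresidue condition to condition (3) by observing that the only new case $w'\in\scrW_1$ reduces, since $|\scrW_1|=1$, to $m\not\equiv m'\modu\calL$. No substantive difference from the paper's argument.
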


\begin{proof}
If $\bbZ^d$ has a subset $\calM$ such that the conditions (1), (2), (3) hold, then by Theorem \ref{Thm: Implies existence of min comple}, $W$ has a minimal complement in $\bbZ^d$. 

Suppose $W$ has a minimal complement $M$ in $\bbZ^d$. Let $\calM$ be as in Theorem \ref{Thm: existence of min complement implies}. Then $\calM$ is a nonempty finite set. Since the composite map $\calM \hra M_\infty \twoheadrightarrow \pi(M_\infty)$ is bijective, it follows that the map $\pi: \calM\to \bbZ^d/\calL$ is injective. By Theorem \ref{Thm: existence of min complement implies}, condition (2) follows, and for any $m\in \calM$, there exists $w\in \scrW_1$ such that $m + w \not\equiv m' + w' \modu \calL$ for any $m'\in \calM, w'\in \calW$, which gives $m + w \not\equiv m' + w' \modu \calL$ for any $m'\in \calM\setminus\{m\}, w'\in \calW$. Note that for $m'\in \calM\setminus\{m\}, w'\in \scrW_1$, we obtain $m+w  \not\equiv m' + w' \modu \calL$ (since $\scrW_1$ contains only one element). This proves condition (3).
\end{proof}

\section{Minimal complements in finitely generated abelian groups}
\label{Sec: Minimal complement fngen}
In this section $G$ is a finitely generated abelian group. We have already seen in Theorem \ref{theorem1.1} that for finite sets $W\subset G$, a minimal complement of $W$ always exists. So we shall consider infinite subsets $W\subset G$ throughout this section. By the structure theorem for finitely generated abelian groups, 
$$G\simeq \mathbb{Z}^{d}\times(\mathbb{Z}/a_{1}\mathbb{Z})\times \cdots \times(\mathbb{Z}/a_{s}\mathbb{Z})$$
with $d\geqslant 0$ and where $a_{1}|a_{2}|...|a_{s}$ are positive integers $>1$, determined uniquely by the isomorphism type of the group $G$. $\mathbb{Z}^{d}$ is said to be the torsion free part and $F :=(\mathbb{Z}/a_{1}\mathbb{Z})\times \cdots \times(\mathbb{Z}/a_{s}\mathbb{Z})$ is the torsion part. $G$ is finite if and only if $d=0$. Since we are in the realm of infinite subsets $W\subset G$, so $G$ is infinite. Hence, $d>0$ in this section. We first show the following proposition which describes the behaviour of minimal complements of product sets.

\begin{proposition}\label{MinCompProd}
Let $n$ be a positive integer. Let $G_1, \cdots, G_n$ be groups. For each $1\leqslant i\leqslant n$, let $A_i$ be a subset of $G_i$ with minimal complement $M_i$. Then $\prod_{1\leqslant i\leqslant n}M_{i}$ is a minimal complement of $\prod_{1\leqslant i\leqslant n}A_{i}$ in $\prod_{1\leqslant i\leqslant n}G_{i}$.
\end{proposition}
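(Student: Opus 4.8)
The plan is to verify the two defining properties of a minimal complement separately: first that $\prod_i M_i$ is a complement of $\prod_i A_i$, and second that it is minimal. Throughout I will write $A = \prod_{1\leqslant i\leqslant n} A_i$, $M = \prod_{1\leqslant i\leqslant n} M_i$, and $G = \prod_{1\leqslant i\leqslant n} G_i$, and I will use the elementary fact that multiplication in a direct product is coordinatewise, so that $A M = \prod_i (A_i M_i)$ as subsets of $G$. With this identity the complement property is immediate: since each $M_i$ is a complement of $A_i$, we have $A_i M_i = G_i$ for every $i$, hence $A M = \prod_i G_i = G$.

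For minimality I must show that removing any single element of $M$ destroys the complement property. Fix $m = (m_1, \cdots, m_n) \in M$; I want to exhibit an element $g \in G$ that is \emph{not} represented as $a \cdot m'$ with $a \in A$ and $m' \in M \setminus \{m\}$. The key step is to use the minimality of each $M_i$: for each coordinate $i$, since $M_i \setminus \{m_i\}$ is not a complement of $A_i$, there exists an element $g_i \in G_i$ with $g_i \notin A_i (M_i \setminus \{m_i\})$. I claim that $g = (g_1, \cdots, g_n)$ works. First, $g$ is representable using all of $M$: writing $g_i = a_i m_i'$ with $a_i \in A_i$, $m_i' \in M_i$, minimality forces $m_i' = m_i$ for every $i$ (otherwise $g_i \in A_i(M_i \setminus \{m_i\})$, contradicting the choice of $g_i$), so in fact $g = a \cdot m$ with $a = (a_1, \cdots, a_n) \in A$, and moreover this shows the representation of $g$ over $A \times M$ uses $m$ in \emph{every} coordinate. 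Consequently, if $g = a \cdot m'$ for some $m' = (m_1', \cdots, m_n') \in M$, then each coordinate equation $g_i = a_i m_i'$ together with $g_i \notin A_i(M_i \setminus \{m_i\})$ forces $m_i' = m_i$, i.e.\ $m' = m$. Hence $g \notin A(M \setminus \{m\})$, so $M \setminus \{m\}$ is not a complement.

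Combining the two parts, $M$ is a complement of $A$ in $G$ from which no element can be removed, so $M$ is a minimal complement, as required. I expect the main (though still modest) obstacle to be the minimality argument: one must be careful that the witness $g$ is chosen coordinatewise and then argue that \emph{any} product-form representation of $g$ is forced to agree with $m$ in every coordinate simultaneously. The subtlety is purely that the representation might a priori use different second-factor elements in different coordinates, and the coordinatewise noncontainment $g_i \notin A_i(M_i \setminus \{m_i\})$ is exactly what rules this out; no deeper structural input is needed.
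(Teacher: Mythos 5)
Your proposal is correct and follows essentially the same route as the paper: verify the complement property coordinatewise, then for minimality pick in each coordinate a witness $g_i \notin A_i(M_i\setminus\{m_i\})$ and observe that the tuple $(g_1,\cdots,g_n)$ forces every representation to use $m$ in every coordinate. The only cosmetic difference is that the paper treats $n=2$ explicitly and then inducts, while you handle general $n$ directly (and your phrasing is in fact slightly more careful, since you only claim the second factor is forced, not that the representation is unique).
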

\begin{proof}
Let $A_{1}$ and $A_{2}$ be two given sets in the groups $G_{1}$ and $G_{2}$ with minimal complements $M_{1}$ and $M_{2}$ respectively. Then
\begin{align*}
A_{1}.M_{1} & = G_{1}, \,\, A_{1}.\big\lbrace M_{1}\setminus \lbrace m\rbrace \big\rbrace \subsetneq G_{1} \,\,\forall m\in M_{1} ,\\
A_{2}.M_{2} & = G_{2}, \,\, A_{2}.\big\lbrace M_{2}\setminus \lbrace m\rbrace \big\rbrace \subsetneq G_{2} \,\,\forall m\in M_{2} .
\end{align*}
Now $A_{1}\times A_{2}\subset G_{1}\times G_{2}$. First we show that $M_{1}\times M_{2}$ is a complement of $A_{1}\times A_{2}$ in $G_{1}\times G_{2}$. Pick any element $(g_{1},g_{2})\in G_{1}\times G_{2}$. Then $\exists a_{1}\in A_{1},a_{2}\in A_{2}, m_{1}\in M_{1},m_{2}\in M_{2}$ with $a_{1}m_{1} = g_{1}$ and $a_{2}m_{2} = g_{2}$. Thus any $(g_{1},g_{2})\in G_{1}\times G_{2}$ can be represented as $(a_{1}m_{1},a_{2}m_{2})$ for some $a_{1}\in A_{1},a_{2}\in A_{2}, m_{1}\in M_{1},m_{2}\in M_{2}$, i.e.,
$$(A_{1}\times A_{2}). (M_{1}\times M_{2}) = G_{1}\times G_{2},$$
thus it is a complement of $A_{1}\times A_{2}$. 

Now, we show that $M_{1}\times M_{2}$ is minimal. Remove an element $(m,n)$ from $M_{1}\times M_{2}$ and look at the set $ M :=M_{1}\times M_{2} \setminus \lbrace (m,n)\rbrace $. We show that $M$ is not a complement of $A_{1}\times A_{2}$ in $G_{1}\times G_{2}$, i.e., $(A_{1}\times A_{2}).M\subsetneq G_{1}\times G_{2}$. Since $M_{1}$ is a minimal complement of $A_{1}$, $\exists a_{1}\in A_{1},g_{1}\in G_{1}$ such that the only way of representing $g_{1}$ in $A_{1}.M_{1}$ is $a_{1}m$. Similarly, $\exists a_{2}\in A_{2},g_{2} \in G_{2}$ with $g_{2} = a_{2}n$. We show that this $(g_{1},g_{2}) \notin (A_{1}\times A_{2}).M$. Indeed, this is clear from the fact that $(g_{1},g_{2}) $ can only be represented in $(A_{1}\times A_{2}). (M_{1}\times M_{2})$ as $(a_{1}m,a_{2}n)$.

To prove the general case we use induction.	

Without loss of generality, let us assume that the statement is true for $k$ groups $G_{1},G_{2},\cdots, G_{k}$ with $k<n$. We show that the statement holds for $(k+1)$-groups. By hypothesis,
\begin{align*}
A_{1} . M_{1}  = G_{1}, \,\, A_{1} . M_{1}\setminus & \lbrace m_{1}\rbrace  \subsetneq G_{1} \,\,\forall m_{1}\in M_{1} \\
A_{2} . M_{2}  = G_{2}, \,\, A_{2} . M_{2}\setminus & \lbrace m_{2}\rbrace  \subsetneq G_{2} \,\,\forall m_{2}\in M_{2} \\	
\vdots	\\
A_{k} . M_{k}  = G_{k}, \,\, A_{k} . M_{k}\setminus & \lbrace m_{k}\rbrace \subsetneq G_{k} \,\,\forall m_{k}\in M_{k}\\
A_{k+1} . M_{k+1}  = G_{k+1}, \,\, A_{k+1} . M_{k+1}\setminus & \lbrace m_{k+1}\rbrace \subsetneq G_{k+1} \,\,\forall m_{k+1}\in M_{k+1}.
\end{align*}
By the inductive assumption $N :=M_{1}\times \cdots \times M_{k}$ is a minimal complement of $A_{1}\times \cdots \times A_{k}$ in $G_{1}\times \cdots \times G_{k}$. To show that $N\times M_{k+1}$ is a minimal complement of  $A_{1}\times \cdots \times A_{k+1}$ in $G_{1}\times \cdots \times G_{k+1}$. It is clear that $N\times M_{k+1}$ is a complement. To show that it is minimal, we remove an arbitrary point $(x,y)$ from $N\times M_{k+1}$ and argue as above to get the required statement.
\end{proof}

An immediate consequence of the above proposition (combined with previous work of Chen--Yang \cite{ChenYang12} and Kiss--S\'andor--Yang \cite{KissSandorYangJCT19}) is the construction of infinite sets in $\mathbb{Z}^{d}$ having minimal complements. We first state their results - 
\begin{theorem}[Chen--Yang {\cite[Theorems 1, 2]{ChenYang12}}]\label{Chen-Yang}
Let $W$ be a nonempty subset of $\bbZ$. Suppose either of the following holds.
\begin{enumerate}
\item $\inf W = -\infty, \,\,\sup W = +\infty$.
\item $W = \{1 = w_1 < w_2 <\cdots \}$ and $\limsup_{i\rightarrow \infty} (w_{i+1}-w_{i}) = +\infty$.
\end{enumerate}
Then $W$ has a minimal complement in $\mathbb{Z}$.
\end{theorem}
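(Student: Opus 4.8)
The plan is to construct the complement directly and read off minimality from the construction, using the elementary but crucial reformulation of the definition of minimal complement: a complement $C$ of $W$ is minimal precisely when every $c\in C$ admits a \emph{witness}, i.e.\ an integer $n_c$ with $n_c\in W+c$ but $n_c\notin W+c'$ for all $c'\in C\setminus\{c\}$, equivalently $(n_c-W)\cap C=\{c\}$. Granting witnesses, minimality is immediate, since deleting $c$ uncovers $n_c$. So the whole task splits into two demands that must hold simultaneously: $(i)$ $W+C=\bbZ$ (covering), and $(ii)$ every element of $C$ carries a permanent witness (essentiality). I would build $C$ by a greedy sweep over an enumeration of $\bbZ$, in the spirit of the intersection argument of Proposition \ref{prop1}: whenever the target integer is not yet covered, add one new complement element that covers it and reserve a witness for that element. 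The difference from the finite case of Proposition \ref{prop1} is that the Pigeonhole step is unavailable, and ensuring $(i)$ and $(ii)$ survive the infinite process is exactly where hypotheses $(1)$ and $(2)$ must be invoked; indeed for general infinite $W$ the conclusion fails, as Corollary \ref{Cor: Nd minimal complement} shows.

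Two features of $W$ supply the witnesses in the two cases. In case $(1)$, with $\inf W=-\infty$ and $\sup W=+\infty$, every translate $W+c$ is itself unbounded above and below. I would start the complement near the origin and, as I move a newly added element far to one side, reserve its witness even farther out on the side where, by construction, only that one translate reaches; the two-sided unboundedness of $W$ is precisely what allows a witness to be slid arbitrarily far in either direction, alternating as the sweep demands. In case $(2)$, with $W=\{1=w_1<w_2<\cdots\}$ bounded below and $\limsup_i(w_{i+1}-w_i)=+\infty$, the complement is forced to be unbounded below, and the witnesses are manufactured from the arbitrarily long gaps of $W$: an integer of the form $c+w_i$, where $w_i$ is the last element of $W$ before a very long gap, lies just below an empty stretch of the translate $W+c$, so no complement element lying within that gap-length of $c$ can re-cover it. One then chooses the indices $i$ realising the large gaps, and spaces the elements of $C$ slowly relative to the growth of those gaps, so that each retained element keeps such a gap-witness.

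The main obstacle in both cases is the \emph{permanence} of witnesses: a witness reserved for an element $c$ must never be silently re-covered by another surviving translate added later in the sweep, while at the same time $C$ must still cover all of $\bbZ$. This is genuinely delicate because, when $W$ has positive density, a single fixed integer can be recaptured by translates $W+c'$ for $c'$ far from $c$, so one cannot dodge all future elements by a naive choice. The resolution is to tie the placement of witnesses (case $(1)$) or the spacing of the complement against the gap structure (case $(2)$) to the point in the sweep at which each element is created, so that every later element is pushed beyond the zone that could threaten earlier witnesses. It is exactly here that the hypotheses are indispensable and cannot be weakened: a one-sided set such as $\bbN u_1+\cdots+\bbN u_d$ offers no room to slide witnesses and has no minimal complement at all (Proposition \ref{Prop: AP Complement iff}), so the two-sided unboundedness in $(1)$ and the unbounded gaps in $(2)$ are doing the essential work. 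I expect the remaining verification to be a careful interleaving of the two invariants, covering and essentiality, with the growth rates chosen against the structure of $W$ so that both persist to the limit.
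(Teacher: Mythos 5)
This statement is quoted from Chen--Yang \cite[Theorems 1, 2]{ChenYang12}; the paper gives no proof of it, so there is nothing internal to compare your route against. Judged on its own terms, your proposal has a genuine gap: it is a strategy outline, not a proof. The witness reformulation (a complement $C$ is minimal iff every $c\in C$ has an integer $n_c$ with $(n_c-W)\cap C=\{c\}$) is correct and is indeed the right lens, and you correctly isolate the hard part --- keeping every reserved witness uncovered by all \emph{later} elements while still achieving $W+C=\bbZ$. But you then leave exactly that part unexecuted (``I expect the remaining verification to be a careful interleaving\dots''). At each stage of the sweep one must exhibit a new element $a\in n_k-W$ (to cover the next uncovered integer $n_k$) lying outside the finitely many sets $m_j-W$ of previously reserved witnesses, and one must then reserve a fresh witness for $a$ that no admissible future element can recapture; neither nonemptiness claim is established, and neither is automatic.

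Moreover, the heuristic you offer for case (1) is not valid as stated. You propose to ``reserve its witness even farther out on the side where, by construction, only that one translate reaches,'' but when $\inf W=-\infty$ and $\sup W=+\infty$ \emph{every} translate $W+c'$ is unbounded in both directions, so no integer, however far out, is reachable by only one translate: $n_c\in W+c'$ exactly when $c'\in n_c-W$, and $n_c-W$ is itself unbounded both ways. Protection of a witness can therefore only come from forcing $C$ to avoid the set $n_c-W$, not from pushing $n_c$ out of range, and arranging this simultaneously with coverage is precisely where the combinatorial work of Chen--Yang lives (their two cases use genuinely different constructions: a recursive choice with reserved witnesses for case (1), and the unbounded gaps $w_{i+1}-w_i$ to manufacture witnesses for case (2)). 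As it stands, your text would need the explicit inductive construction, with the avoidance and coverage conditions verified at each step, before it could count as a proof.
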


\begin{theorem}[Kiss--S\'andor--Yang {\cite[Theorem 3]{KissSandorYangJCT19}}]\label{Kiss-Sandor-Yang}
Let $W\subset \mathbb{Z}$ be of the following form, $$W = (m\mathbb{N} + X_{m}) \cup Y^{(0)} \cup Y^{(1)},$$
where $X_{m}\subseteq \lbrace 0, 1,\cdots, m-1\rbrace, Y^{(0)}\subseteq \mathbb{Z}^{-}, Y^{(1)} $ are finite sets with $Y^{(0)} \mod m \subseteq X_{m}$ and $(Y^{(1)}\mod m) \cap X_{m} = \emptyset .$ There exists a minimal complement to $W$ if and only if there exists $T\in \mathbb{Z}^{+}, m |T$, and $C\subseteq \lbrace 0, 1, \cdots, T-1\rbrace $ such that
\begin{enumerate}
\item 	$C+(X_{T}\cup Y^{(1)} ) \mod T = \lbrace 0, 1,\cdots, T-1\rbrace$ where $X_{T}=\cup_{i=0}^{\frac{T}{m}-1}\lbrace im +X_m\rbrace ;$
\item for any $c\in C$, there exists $y\in Y^{(1)} $ for which $c +y	\not\equiv c'+x \, (\mathrm{mod} \, T),$ where $c'\in C\setminus{c} $ and $x \in X_{T}$.
\end{enumerate}
\end{theorem}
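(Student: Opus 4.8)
The plan is to regard $W$ as an eventually periodic subset of $\bbZ = \bbZ^1$ and to extract the stated characterization from the structural results of Section~\ref{Sec: Minimal complement}, specialized to $d = 1$ with the single period taken to be $u_1 = T$. I would first set up a dictionary between the two descriptions. For a fixed multiple $T$ of $m$, the set $W$ is eventually periodic with period $T$, and in the notation of Theorem~\ref{Thm: Structure of Eventually Periodic} applied with $\calL = T\bbZ$ one checks that the periodic tail $m\bbN + X_m$ yields $\calW$ with $\pi(\calW) = X_T$, that $\scrW = Y^{(0)} \sqcup Y^{(1)}$, and that $\scrW_0 = Y^{(0)}$ while $\scrW_1 = Y^{(1)}$ (the latter because $Y^{(1)} \bmod m$ is disjoint from $X_m$, hence $Y^{(1)} \bmod T$ is disjoint from $X_T$). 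Under this dictionary the set $C$ corresponds to $\calM$ reduced modulo $T$, the covering hypothesis~(1) becomes surjectivity of $\pi\colon \calM + (\calW \cup \scrW_1) \to \bbZ/T\bbZ$, and the separation hypothesis~(2) is a weakened form of condition~(3) in Theorems~\ref{Thm: existence of min complement implies} and~\ref{Thm: Implies existence of min comple}.

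For the necessity (``only if'') direction I would run the extraction carried out in the proof of Theorem~\ref{Thm: existence of min complement implies}. Because the periodic part of $W$ grows only toward $+\infty$, a minimal complement $M$ must contain a subset $M_\infty$ reaching arbitrarily far toward $-\infty$; its residues modulo $T$ form a finite set, which I would take as $C$. Part~(1) of that theorem then supplies the covering condition and part~(2) supplies the separation condition, yielding the required $(T, C)$. The only freedom used here is that $T$ may be enlarged to any common period witnessing the eventual periodicity of $W$ together with the residue pattern of $M_\infty$.

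For the sufficiency (``if'') direction the natural move is to appeal to Theorem~\ref{Thm: Implies existence of min comple} with period $u_1 = T$ and $\calM$ the integer lifts of $C$ in $\{0, \dots, T-1\}$: injectivity of $\pi$ on $\calM$ is automatic, hypothesis~(1) of Kiss--S\'andor--Yang gives the surjectivity condition~(2), and one would hope hypothesis~(2) gives condition~(3). The construction of the complement is then the greedy thinning of the proof of Theorem~\ref{Thm: Implies existence of min comple} (the chain $C_1 \supseteq C_2 \supseteq \cdots$ defined there), which is exactly the one-dimensional procedure of~\cite{KissSandorYangJCT19}, with the witnesses $y \in Y^{(1)} = \scrW_1$ certifying that no constructed element is removable.

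The step I expect to be the main obstacle is precisely the discrepancy between the two minimality conditions. The sufficient condition of Theorem~\ref{Thm: Implies existence of min comple} forbids congruences $m + w \equiv m' + w'$ with $w'$ ranging over all of $\calW \cup \scrW_1 = X_T \cup Y^{(1)}$, whereas Kiss--S\'andor--Yang~(2) only forbids them for $w' \in X_T$; thus the $d$-dimensional theorem is strictly stronger than what is available here, and a direct appeal does not close the sufficiency direction under the stated weaker hypothesis. Overcoming this requires a genuinely one-dimensional argument exploiting the total order of $\bbZ$: since a witness $c + y$ with $y \in Y^{(1)}$ is congruent to no element of $X_T$, it can be translated arbitrarily far toward $-\infty$ and hence can only be represented through the $-\infty$ branch $M_\infty$, which pins down the necessary element and forces minimality even without the extra separation from $Y^{(1)}$. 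This is also the point at which the freedom to enlarge $T$ is essential, and it is exactly the case $\scrW_1 = Y^{(1)}$ a singleton (Theorem~\ref{Thm: Even peri 1 has min comp iff}) in which the two conditions coincide and the characterization becomes a clean equivalence.
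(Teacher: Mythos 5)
First, a point of order: the paper does not prove this statement at all --- it is Theorem~3 of Kiss--S\'andor--Yang, quoted verbatim from \cite{KissSandorYangJCT19} and used as a black box (e.g.\ in Theorem~\ref{MinProdThm} and Remark~\ref{Remark:MinCompEg}). So there is no proof in the paper to compare yours against; what you have written is an attempt to rederive the cited result from the paper's own $d$-dimensional machinery, and that attempt is incomplete.

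You have in fact located the gap yourself without closing it. The necessity direction is fine in outline: specializing Theorem~\ref{Thm: existence of min complement implies} to $d=1$, $u_1=T$ does produce a finite $C=\pi(\calM)$ satisfying both KSY conditions (the paper's condition~(2) there, with $w'$ ranging over $\calW$ and $m'$ over all of $\calM$, implies KSY's condition~(2); the case $m'=m$ is automatic since $\pi(\scrW_1)$ is disjoint from $\pi(\calW)$). But the sufficiency direction does not follow from Theorem~\ref{Thm: Implies existence of min comple}: that theorem requires $m+w\not\equiv m'+w' \modu \calL$ for all $w'\in\calW\cup\scrW_1$, and the extra clause $w'\in\scrW_1$ is used essentially in its proof --- it is what guarantees that the whole progression $m+w+\calL$ lies in $M_m+\scrW_1$ rather than being partly absorbed by $M_{m'}+\scrW_1$ for $m'\neq m$, which in turn is what forces each fibre $M_m$ to descend to $-\infty$ and hence cover $m+\calW+\calL$. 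Under KSY's weaker hypothesis~(2) (with $x\in X_T$ only), a very negative element of the class $c+y$ may perfectly well be written as $m'+y'$ with $m'\in M_{c'}$, $c'\neq c$, $y'\in Y^{(1)}$ whenever $c+y\equiv c'+y'\ (\mathrm{mod}\ T)$; so your claim that such elements ``can only be represented through'' the fibre of $c$ is false as stated, and the greedy thinning no longer certifies that $M_c$ is unbounded below. Closing this requires a genuinely different one-dimensional construction (the one in \cite{KissSandorYangJCT19}, which arranges the complement so that collisions among the finitely many $Y^{(1)}$-translates of distinct fibres are avoided), not a translation-to-$-\infty$ remark. The paper itself tacitly concedes the point: its own equivalence, Theorem~\ref{Thm: Even peri 1 has min comp iff}, is obtained only when $\scrW_1$ is a singleton, which is exactly the case where the two separation conditions coincide.
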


Using Theorem \ref{Chen-Yang}, Theorem \ref{Kiss-Sandor-Yang} and Proposition \ref{MinCompProd}, we have the following immediate result.

\begin{theorem}[Product sets having minimal complements in $\mathbb{Z}^{d}$]\label{MinProdThm}
Fix $d$ a positive integer. Let $W_{i}\subset \mathbb{Z}$ be of the form described in Theorem \ref{Chen-Yang} or Theorem \ref{Kiss-Sandor-Yang} or finite, for each $1\leqslant i\leqslant d$. Then $W_{1}\times W_{2}\times \cdots \times W_{d}$ has a minimal complement in $\mathbb{Z}^{d}$. 
\end{theorem}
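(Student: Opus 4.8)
The plan is to reduce the statement to a single application of Proposition \ref{MinCompProd}, for which the only thing that needs to be checked is that each individual factor $W_i$ already possesses a minimal complement in $\mathbb{Z}$. Once this is established factorwise, the product statement is immediate.

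First I would dispose of the three cases for a single factor $W_i \subseteq \mathbb{Z}$. If $W_i$ is of the form described in Theorem \ref{Chen-Yang}, then that theorem directly produces a minimal complement $M_i$ of $W_i$ in $\mathbb{Z}$. If $W_i$ is of the form described in Theorem \ref{Kiss-Sandor-Yang} (where ``of the form described'' is understood to include the existence criterion recorded there, namely the existence of a suitable $T$ and $C$), then again $W_i$ admits a minimal complement $M_i$. Finally, if $W_i$ is finite, then, $\mathbb{Z}$ being a group, Theorem \ref{theorem1.1} guarantees that every complement of $W_i$ in $\mathbb{Z}$ contains a minimal complement; applying this to the complement $\mathbb{Z}$ itself yields a minimal complement $M_i \subseteq \mathbb{Z}$ of $W_i$. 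Thus in all three cases we obtain a minimal complement $M_i$ of $W_i$ in $\mathbb{Z}$.

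Having produced such an $M_i$ for each $1 \leqslant i \leqslant d$, I would then invoke Proposition \ref{MinCompProd} with $G_i = \mathbb{Z}$, $A_i = W_i$, and the minimal complements $M_i$ just chosen. The proposition asserts precisely that $\prod_{1\leqslant i\leqslant d} M_i$ is a minimal complement of $\prod_{1\leqslant i\leqslant d} W_i = W_1 \times \cdots \times W_d$ in $\prod_{1\leqslant i\leqslant d}\mathbb{Z} = \mathbb{Z}^d$, which is exactly the desired conclusion.

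Since every step is a direct citation, there is no genuine obstacle here; the theorem is an honest corollary of the three results it combines. The only points requiring care are purely organizational: in the finite case one must note that the sole feature of $\mathbb{Z}$ being used is that it is a group, so Theorem \ref{theorem1.1} applies verbatim, and in the Kiss--S\'andor--Yang case one must interpret the hypothesis so that the existence half of that \emph{if and only if} statement is actually guaranteed. With these caveats, each factor contributes a minimal complement and Proposition \ref{MinCompProd} assembles them into a minimal complement of the product.
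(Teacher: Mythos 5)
Your argument is exactly the paper's proof: obtain a minimal complement $M_i$ of each factor $W_i$ from Theorem \ref{Chen-Yang}, Theorem \ref{Kiss-Sandor-Yang}, or Theorem \ref{theorem1.1} (the finite case), and then apply Proposition \ref{MinCompProd} to conclude that $M_1\times\cdots\times M_d$ is a minimal complement of $W_1\times\cdots\times W_d$ in $\mathbb{Z}^d$. Your added caveats (that only the group structure of $\mathbb{Z}$ is used in the finite case, and that the Kiss--S\'andor--Yang hypothesis must be read as including the existence criterion) are correct and consistent with the paper's intent.
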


\begin{proof}
Each $W_{i}\subset \mathbb{Z}$ has a minimal complement $M_{i}$ in $\mathbb{Z}$ (using Theorem \ref{Chen-Yang} or Theorem \ref{Kiss-Sandor-Yang} or Theorem \ref{theorem1.1}). Using Proposition \ref{MinCompProd}, we see that $M_{1}\times M_{2}\times \cdots \times M_{d}$ is a required minimal complement.
\end{proof}

However, not all infinite sets in $\mathbb{Z}^{d}$ are product sets. We have seen in the previous section a sufficient condition for the existence of minimal complements for eventually periodic sets in $\mathbb{Z}^{d}$ (which are not product sets, when they are not periodic). We shall now exploit the structure of the finitely generated abelian group $G$.

\begin{lemma}\label{lemmasec5}
Let $G\simeq \mathbb{Z}^{d}\times F$ be any finitely generated abelian group ($F$ is the finite torsion part). Let $\emptyset\neq A\subset \mathbb{Z}^{d}$. Suppose minimal complement of $A$ in $\mathbb{Z}^{d}$ exist. Then a minimal complement of $A\times H $ exists $\forall \,\emptyset\neq H\subseteq F$. 
\end{lemma}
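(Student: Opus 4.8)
The plan is to reduce the statement directly to the product proposition already established. The decomposition $G \simeq \mathbb{Z}^{d} \times F$ is tailor-made for Proposition \ref{MinCompProd}, so the whole argument is a matter of supplying a minimal complement for each factor and invoking that result with $n = 2$.

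First I would observe that $F$, being the torsion part of a finitely generated abelian group, is a finite group, and $H$ is a nonempty (finite) subset of it. Hence $H$ admits a minimal complement in $F$: this is the finite-subset case of Theorem \ref{theorem1.1}, applied to the finite group $F$; indeed, for a finite ambient group the existence of a minimal complement of any nonempty subset is immediate, as already noted in the proof of Proposition \ref{prop1}. Call such a minimal complement $N \subseteq F$. By hypothesis, $A$ admits a minimal complement in $\mathbb{Z}^{d}$; call it $M$.

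Next I would apply Proposition \ref{MinCompProd} with $n = 2$, $G_1 = \mathbb{Z}^{d}$, $G_2 = F$, the subsets $A_1 = A$, $A_2 = H$, and their respective minimal complements $M_1 = M$, $M_2 = N$. The proposition then yields at once that $M \times N$ is a minimal complement of $A \times H$ in $\mathbb{Z}^{d} \times F \simeq G$. This is exactly the desired conclusion, and it holds for every nonempty $H \subseteq F$, since $H$ was arbitrary.

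I do not expect any genuine obstacle here. The only point requiring care is the existence of a minimal complement of $H$ inside $F$, but this is automatic from the finiteness of $F$. All the substantive content — that a product of minimal complements is again a minimal complement — is already packaged in Proposition \ref{MinCompProd}, so the lemma follows formally.
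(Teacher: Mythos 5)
Your proposal is correct and matches the paper's own proof: both take a minimal complement $B$ of $A$ in $\mathbb{Z}^d$, obtain a minimal complement of the finite set $H$ in the finite group $F$ via Theorem \ref{theorem1.1}, and conclude by Proposition \ref{MinCompProd} that the product is a minimal complement of $A\times H$ in $G$.
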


\begin{proof}
We have $G\simeq \mathbb{Z}^{d}\times F$, with $F$ a finite group. Suppose $\emptyset\neq A\subset \mathbb{Z}^{d}, \emptyset\neq H\subset F$.

Let $B$ be a minimal complement of $A$ in $\mathbb{Z}^{d}$. Since $F$ is finite, $H$ is also finite. By Theorem \ref{theorem1.1}, we know that a minimal complement of $H$ exists. Let it be $H'$. Now using the previous proposition, we have that $B\times H'$ is a minimal complement of $A\times H$ in $G$.	
\end{proof}

We come to the main theorem which describes a large class of infinite sets having minimal complements in finitely generated abelian groups.
\begin{theorem}[Minimal complements in finitely generated abelian groups]\label{sec5Thm}
Let $G\simeq \mathbb{Z}^{d}\times F$ be any finitely generated abelian group. Suppose $W\subseteq \mathbb{Z}^{d}$ be either of the form given in Theorem \ref{Thm: Implies existence of min comple} or a product set $W_{1}\times W_{2}\times \cdots \times W_{d}$ as described in Theorem \ref{MinProdThm}. Then $W\times H$ will have a minimal complement in $G$ where $H \subseteq F$ is any arbitrary nonempty subset. 
\end{theorem}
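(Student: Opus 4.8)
The plan is to observe that this theorem is essentially a corollary obtained by assembling the machinery already developed in this section, so the proof amounts to verifying that the hypotheses of the earlier results are met in each of the two listed cases and then invoking Lemma \ref{lemmasec5}. The decomposition $G\simeq \bbZ^d\times F$ with $F$ finite is exactly the setup of Lemma \ref{lemmasec5}, and that lemma reduces the problem over $G$ to the existence of a minimal complement of $W$ in the torsion-free part $\bbZ^d$ alone.

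First I would dispose of the two cases separately at the level of $\bbZ^d$. In the first case, $W$ satisfies the conditions of Theorem \ref{Thm: Implies existence of min comple}, so that theorem directly yields a minimal complement of $W$ in $\bbZ^d$. In the second case, $W=W_1\times\cdots\times W_d$ is a product set of the type described in Theorem \ref{MinProdThm}; since each $W_i\subseteq\bbZ$ has a minimal complement in $\bbZ$ (by Theorem \ref{Chen-Yang}, Theorem \ref{Kiss-Sandor-Yang}, or Theorem \ref{theorem1.1}, as appropriate), Proposition \ref{MinCompProd} shows that the product of these complements is a minimal complement of $W$ in $\bbZ^d$. Thus in both cases we have established the single fact needed: $W$ admits a minimal complement in $\bbZ^d$.

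Having secured a minimal complement $B$ of $W$ in $\bbZ^d$, I would then apply Lemma \ref{lemmasec5} with $A=W$. Concretely, since $F$ is finite, any nonempty $H\subseteq F$ is finite and hence has a minimal complement $H'$ in $F$ by Theorem \ref{theorem1.1}; Proposition \ref{MinCompProd} then guarantees that $B\times H'$ is a minimal complement of $W\times H$ in $G\simeq\bbZ^d\times F$. This completes the argument.

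There is no substantive obstacle here: all the difficulty has already been absorbed into Theorem \ref{Thm: Implies existence of min comple}, Theorem \ref{MinProdThm}, and Proposition \ref{MinCompProd}. The only point requiring any care is purely bookkeeping, namely checking that the hypotheses listed in the statement line up precisely with the hypotheses of the two theorems being quoted (so that ``either of the form given in Theorem \ref{Thm: Implies existence of min comple} or a product set as in Theorem \ref{MinProdThm}'' is exactly what is needed to produce a minimal complement of $W$ in $\bbZ^d$), after which Lemma \ref{lemmasec5} applies verbatim.
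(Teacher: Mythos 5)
Your proposal is correct and follows exactly the paper's own argument: establish that $W$ admits a minimal complement in $\bbZ^d$ in each of the two cases (via Theorem \ref{Thm: Implies existence of min comple} or via Theorem \ref{MinProdThm} with Proposition \ref{MinCompProd}), then invoke Lemma \ref{lemmasec5}. The only difference is that you unfold the content of Lemma \ref{lemmasec5} explicitly, which the paper leaves as a citation.
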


\begin{proof}
The form of $W\subseteq \mathbb{Z}^{d}$ ensures that $W$ has a minimal complement in $\mathbb{Z}^{d}$. After this we apply Lemma \ref{lemmasec5} to get the desired conclusion.
\end{proof}

\begin{remark}
\label{Remark:MinCompEg}
The upshot of the discussion in secton \ref{Sec: Minimal complement fngen} is to provide examples of subsets of finitely generated abelian groups admitting minimal complements. Some such examples were already given in secton \ref{Sec: Minimal complement} (see Theorem \ref{Thm: Implies existence of min comple}), where we considered the group $\bbZ^d$, i.e., free abelian groups only. The immediate question is look for examples of such subsets in finitely generated abelian groups having nontrivial torsion. By Proposition \ref{MinCompProd}, a subset $W$ of a group $G$ admits a minimal complement if $G$ is isomorphic to the direct product of groups $G_1\times \cdots \times G_n$ and under such an isomorphism $W$ corresponds to the product of subsets $W_i$ of $G_i$ having minimal complements. 
Hence, as a consequence of Theorems \ref{Chen-Yang}, \ref{Kiss-Sandor-Yang}, \ref{Thm: Implies existence of min comple}, \ref{theorem1.1} and Propositions  \ref{prop4.1}, \ref{prop6.1}, it follows that a subset $W$ of a finitely generated abelian group $G$ admits a minimal complement in $G$ if there exists finitely generated free abelian groups $G_1, \cdots, G_n$ and an isomorphism $\psi: G\xra{\sim} G_\tors \times G_1\times \cdots \times G_n$ (where $G_\tors$ denotes the torsion part of $G$) such that $\psi(W)$ is equal to the product $W_0 \times W_1 \times \cdots \times W_n$ where $W_0$ is a nonempty subset of $G_\tors$, $W_1, \cdots, W_n$ are subsets of $G_1, \cdots, G_n$ and for each $1\leqslant i\leqslant n$, one of the following conditions hold.
\begin{enumerate}
\item $G_i=\bbZ$ and $W_i$ satisfies the conditions of Theorem \ref{Chen-Yang}. 
\item $G_i = \bbZ$ and $W_i$ satisfies the conditions of Theorem \ref{Kiss-Sandor-Yang}. 
\item $G_i = \bbZ^{d_i}$ for some integer $d_i\geqslant 1$ and $W_i$ satisfies the conditions of Theorem \ref{Thm: Implies existence of min comple}.
\item $G_i = \bbZ^{d_i}$ for some integer $d_i\geqslant 1$ and $W_i$ is a nonempty finite subset of $G_i$. 
\item $G_i$ is a free abelian group of finite rank and $W_i$ is a subgroup of $G_i$.
\item $G_i = \bbZ^{d_i}$ for some integer $d_i\geqslant 1$ and $W_i$ is equal to $\{(\pm n,\cdots, \pm n)\in \bbZ^{d_i}\,|\, n\in \bbZ\}$ (see Proposition \ref{prop6.1}). 
\end{enumerate}
\end{remark}

To conclude this section, we see that a combination of Theorem \ref{sec5Thm}, Theorem \ref{MinProdThm} and Proposition \ref{MinCompProd} gives us infinite sets in arbitrary finitely generated abelian groups having minimal complements, providing a partial answer to Nathanson's Question \ref{nathansonprob13}.

\section{Conclusion and further remarks}
\label{Sec:Conclusion}

Finally, we conclude the article with a class of examples of infinite sets which are not eventually periodic (i.e., they don't fall in the class of  Theorem \ref{Thm: existence of min complement implies} and Theorem \ref{Thm: Implies existence of min comple}) but have minimal complements. An example of such a subset $W$ of $\bbZ$ is given in \cite[Theorem 4]{KissSandorYangJCT19}. By Proposition \ref{MinCompProd}, its $d$-fold product $W\times \cdots \times W$ has a minimal complement in $\bbZ^d$ and it is not eventually periodic. 
In Propositions \ref{prop6.1}, \ref{prop6.2}, we give examples of non-eventually periodic subsets of $\bbZ^d$, which are algebro-geometric in nature and admit minimal complements. These are obtained by taking images of $\bbZ^m$ under polynomial maps. 
\begin{proposition}\label{prop6.1}
Let $d\geqslant 2$ be a positive integer. Then the subset 
$$\calD := \{(\pm n,\cdots, \pm n)\in \bbZ^d\,|\, n\in \bbZ\}$$
of $\bbZ^d$ is not eventually periodic. For each $1\leqslant i\leqslant d$, the hypersurface defined by $x_i=0$, i.e., the set 
$$\calH_i:=\{(x_1, \cdots, x_d)\in \bbZ^d\,|\, x_i=0\}$$ is a minimal complement of $\calD$.
\end{proposition}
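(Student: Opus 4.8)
The plan is to establish three separate claims: that $\calD$ violates the containment condition in the definition of an eventually periodic set (Definition \ref{Defn: periodic}), that $\calD + \calH_i = \bbZ^d$, and that no proper subset of $\calH_i$ is a complement. Since $\calD$ is invariant under coordinate permutations and the permutation swapping indices $i$ and $j$ carries $\calH_i$ to $\calH_j$, it suffices to treat a single index; I keep $i$ general, but the reader may fix $i=1$.

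For the first claim I would argue by contradiction. The key observation is that $\calD$ contains the whole line $L := \{n(1,\cdots,1) \mid n\in\bbZ\}$ through the origin (take all signs positive and let $n$ range over $\bbZ$). Suppose $\calD$ were eventually periodic with some periods $u_1,\cdots,u_d$, so $\calD \subseteq F + (\bbN u_1 + \cdots + \bbN u_d) \subseteq F + \widetilde C$ for a finite set $F$, where $\widetilde C := \{\sum_i t_i u_i \mid t_i \in \bbR_{\geqslant 0}\}$. Because $u_1,\cdots,u_d$ satisfy no nontrivial $\bbZ$-linear relation they form an $\bbR$-basis, so $\widetilde C$ is a closed \emph{pointed} cone, i.e.\ $\widetilde C \cap (-\widetilde C) = \{(0,\cdots,0)\}$. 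For every $n$ there is then $f_n\in F$ with $n(1,\cdots,1) - f_n \in \widetilde C$; as $F$ is finite, some $f_+\in F$ recurs for infinitely many $n\to+\infty$, and dividing $n(1,\cdots,1)-f_+ \in \widetilde C$ by $n$ and using that $\widetilde C$ is a closed cone gives $(1,\cdots,1)\in\widetilde C$. Symmetrically some $f_-\in F$ recurs for infinitely many $n\to-\infty$, and dividing by $-n$ gives $-(1,\cdots,1)\in\widetilde C$. These contradict pointedness. I expect this to be the main obstacle: everything else is a direct computation, whereas here one must convert the purely algebraic linear-independence hypothesis into the geometric fact that $\bbN u_1+\cdots+\bbN u_d$ sits in a pointed cone, which therefore cannot swallow a two-sided line even after a bounded translation.

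For the complement property I would take an arbitrary $a=(a_1,\cdots,a_d)\in\bbZ^d$ and choose $D\in\calD$ whose $i$-th coordinate equals $a_i$ (for instance $D=(a_i,\cdots,a_i)$, or in the signed reading an element of $\calD$ with $i$-th entry $a_i$ and remaining entries of equal absolute value). Then $H:=a-D$ has vanishing $i$-th coordinate, so $H\in\calH_i$ and $a=D+H$, whence $\calD+\calH_i=\bbZ^d$.

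For minimality I would fix $h\in\calH_i$ and exhibit the single point $p:=h$ as representable only with the summand $h$. Indeed $p=(0,\cdots,0)+h$ with $(0,\cdots,0)\in\calD$; conversely, in any representation $p=D+H$ with $D\in\calD$, $H\in\calH_i$, comparing $i$-th coordinates forces $D_i=0$, and since every element of $\calD$ has all coordinates of equal absolute value $|n|$ this gives $n=0$, hence $D=(0,\cdots,0)$ and $H=h$. Thus $p\notin \calD+(\calH_i\setminus\{h\})$, so $\calH_i\setminus\{h\}$ fails to be a complement; as $h$ was arbitrary, $\calH_i$ is a minimal complement of $\calD$.
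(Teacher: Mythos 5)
Your proposal is correct, and the complement and minimality arguments coincide with the paper's: you produce the representative $(a_i,\cdots,a_i)\in\calD$ to show $\calD+\calH_i=\bbZ^d$, and you observe that the only element of $\calD$ with vanishing $i$-th coordinate is the origin, so every $h\in\calH_i$ has the unique representation $(0,\cdots,0)+h$, forcing any sub-complement of $\calH_i$ to contain all of $\calH_i$. The genuinely different step is the non-eventual-periodicity. The paper attacks the \emph{second} clause of Definition \ref{Defn: periodic}: it notes that $(n,\cdots,n)+u_i$ must lie in $\calD$ for large $n$, and that the only points of $\calD$ within distance $n/\sqrt{d}$ of $(n,\cdots,n)$ lie on the diagonal line, so each $u_i$ would be a scalar multiple of $(1,\cdots,1)$ --- impossible for $d\geqslant 2$ since the $u_i$ are linearly independent. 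You instead attack the \emph{first} clause: the two-sided line $\bbZ(1,\cdots,1)\subseteq\calD$ cannot sit inside $F+(\bbN u_1+\cdots+\bbN u_d)$ because the latter lies in finitely many translates of the closed pointed cone generated by the $u_i$, and a pigeonhole-plus-limit argument would force both $(1,\cdots,1)$ and $-(1,\cdots,1)$ into that cone. Both arguments are sound; yours is slightly more general in that it only uses the containment condition (and would even show $\calD$ is not contained in any set of the form $F+\bbN u_1+\cdots+\bbN u_d$, and works verbatim for $d=1$), while the paper's is more elementary, avoiding limits and closedness of cones at the cost of a small metric computation and the restriction $d\geqslant 2$.
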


\begin{proof}
Suppose $\calD$ is eventually periodic. So there exist elements $u_1, \cdots, u_d$ in $\bbZ^d$ satisfying no nontrivial $\bbZ$-linear relation and an integer $n_0\geqslant 1$ such that for any integer $n$ with $|n|> n_0$, 
$$\calD \supseteq (\pm n, \cdots, \pm n) + (\bbN u_1 + \cdots + \bbN u_d).$$ 
Let $n'$ denote the smallest integer $\geqslant \max \{n_0 , ||u_1||, \cdots, ||u_d||\}$. For any positive integer $n$ with $n>\sqrt d n'$ and any $1\leqslant i\leqslant d$, the vector $(n, \cdots, n)+u_i$ is contained in the open ball $B_{(n, \cdots, n), n/\sqrt d}$ in $\bbR^d$ with center at the point $(n,\cdots, n)$ with radius $n/\sqrt d$, and it is also contained in $\calD$. Hence the vectors $u_1, \cdots, u_d$ are scalar multiples of $(1, \cdots, 1)$, which is absurd. So $\calD$ is not eventually periodic.

For any element $(x_1, \cdots, x_d)$ of $\bbZ^d$, the point $(x_i, \cdots, x_i)$ belongs to $\calD$ and $\calH_i$ contains $(x_1, \cdots, x_d) - (x_i, \cdots, x_i)$ and hence $\calH_i$ is a complement of $\calD$. 
Suppose $M_i$ is a subset of $\calH_i$ such that $M_i + \calD = \bbZ^d$. Note that the $i$-th coordinate of any element of $M_i + \calD \setminus\{(0,\cdots,0)\}$ is nonzero. So it contains no element of $\calH_i$. Thus $\calH_i$ is contained in $(0,\cdots, 0) + M_i$. So $M_i$ is equal to $\calH_i$. Hence $\calH_i$ is a minimal complement to $\calD$. 
\end{proof}

\begin{proposition}
\label{prop6.2}
Let $d\geqslant 2$ be a positive integer. Let $f_1, \cdots, f_d$ be elements of $\bbZ[X_1, \cdots, X_m]$. 
Define the subset $\calS$ of $\bbZ^d$ by 
$$\calS := \{( f_1(n_1, \cdots, n_m),\cdots, f_d(n_1, \cdots, n_m))\,|\, (n_1, \cdots, n_m)\in \bbZ^m\}.$$
Let $1\leqslant i\leqslant d$ be an integer such that the map $f_i:\bbZ^m\to \bbZ$ is surjective. Then the hypersurface defined by $x_i=0$, i.e., the set 
$$\calH_i:=\{(x_1, \cdots, x_d)\in \bbZ^d\,|\, x_i=0\}$$
is a complement of $\calS$. Moreover, if $\calS \cap \calH_i$ contains only one element, then $\calH_i$ is a minimal complement of $\calS$. 
\end{proposition}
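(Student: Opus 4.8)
The plan is to follow the template of Proposition~\ref{prop6.1}, of which this is the generalization obtained by replacing the diagonal set $\calD$ by the polynomial image $\calS$. The argument naturally divides into proving that $\calH_i$ is a complement and then, under the singleton hypothesis, that it is minimal.

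First I would verify that $\calH_i + \calS = \bbZ^d$, using only the surjectivity of $f_i$. Given an arbitrary $(x_1, \cdots, x_d) \in \bbZ^d$, surjectivity provides $(n_1, \cdots, n_m) \in \bbZ^m$ with $f_i(n_1, \cdots, n_m) = x_i$; set $s := (f_1(n_1, \cdots, n_m), \cdots, f_d(n_1, \cdots, n_m)) \in \calS$. Then the $i$-th coordinate of $s$ is $x_i$, so $(x_1, \cdots, x_d) - s$ has vanishing $i$-th coordinate and hence lies in $\calH_i$. The decomposition $(x_1, \cdots, x_d) = \big((x_1, \cdots, x_d) - s\big) + s$ then exhibits an arbitrary point of $\bbZ^d$ as a sum of an element of $\calH_i$ and an element of $\calS$, so this part needs no further input.

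For minimality, I would first record that $\calS \cap \calH_i$ is automatically nonempty (again by surjectivity of $f_i$, applied to the fibre over $0$), so the hypothesis really says it is exactly a singleton, say $\calS \cap \calH_i = \{s_0\}$. Now let $M \subseteq \calH_i$ be any subset with $M + \calS = \bbZ^d$; the aim is to deduce $M = \calH_i$. The crucial observation is that every element of $\calS \setminus \{s_0\}$ has nonzero $i$-th coordinate (as $s_0$ is the only point of $\calS$ on the hyperplane $x_i = 0$), whereas every element of $M$ has $i$-th coordinate $0$; hence every element of $M + (\calS \setminus \{s_0\})$ has nonzero $i$-th coordinate and so lies outside $\calH_i$. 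Since $M + \calS$ is all of $\bbZ^d$ and therefore covers $\calH_i$, each point of $\calH_i$ must come from $M + \{s_0\}$, giving $\calH_i \subseteq M + s_0$.

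The one place where a little care is needed — the mild analogue of the ``hard part'' — is that here $s_0$ need not be the origin, so, unlike in Proposition~\ref{prop6.1}, I cannot read off $\calH_i \subseteq M$ immediately. Instead I would use that $\calH_i$ is a subgroup of $\bbZ^d$ and that $s_0 \in \calH_i$: translation by $-s_0$ preserves $\calH_i$, so $\calH_i \subseteq M + s_0$ yields $\calH_i = \calH_i - s_0 \subseteq M$. Combined with $M \subseteq \calH_i$, this forces $M = \calH_i$, establishing that $\calH_i$ is a minimal complement of $\calS$.
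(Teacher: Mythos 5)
Your proposal is correct and follows essentially the same argument as the paper: surjectivity of $f_i$ gives the complement property, and for minimality one observes that $M + (\calS\setminus\{s_0\})$ misses $\calH_i$, so $\calH_i \subseteq M + s_0$ and hence $\calH_i = \calH_i - s_0 \subseteq M$. Your remark that $\calH_i - s_0 = \calH_i$ because $\calH_i$ is a subgroup containing $s_0$ just makes explicit a step the paper states without comment.
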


\begin{proof}
For each element $(x_1, \cdots, x_d)$ of $\bbZ^d$, there exists an element $(n_1, \cdots, n_m)\in \bbZ^m$ such that $f_i(n_1, \cdots, n_m) = x_i$. So $\calS$ contains the point $(f_1(n_1, \cdots, n_m), \cdots,$ $ f_d(n_1, \cdots, n_m))$ and $\calH_i$ contains the difference $(x_1, \cdots, x_d)-(f_1(n_1, \cdots, n_m), \cdots,$ $ f_d(n_1, \cdots, n_m))$. Hence $\calH_i$ is a complement to $\calS$. 

Suppose $\calS \cap \calH_i$ contains only one element $P$ of $\bbZ^d$. Let $M_i$ be a subset of $\calH_i$ such that $M_i + \calS = \bbZ^d$. Note that the $i$-th coordinate of any point of $M_i + \calS \setminus\{P\}$ is nonzero. So $\calH_i$ is contained in $P + M_i$. Hence $\calH_i - P = \calH_i$ is contained in $M_i$. Hence $\calH_i$ is a minimal complement of $\calS$. 
\end{proof}

\section{Acknowledgements}
The first author would like to thank the Fakult\"at f\"ur Mathematik, Universit\"at Wien where a part of the work was carried out.
The second author would like to acknowledge the Initiation Grant from the Indian Institute of Science Education and Research Bhopal, and the INSPIRE Faculty Award from the Department of Science and Technology, Government of India.

\def\cprime{$'$} \def\Dbar{\leavevmode\lower.6ex\hbox to 0pt{\hskip-.23ex
  \accent"16\hss}D} \def\cfac#1{\ifmmode\setbox7\hbox{$\accent"5E#1$}\else
  \setbox7\hbox{\accent"5E#1}\penalty 10000\relax\fi\raise 1\ht7
  \hbox{\lower1.15ex\hbox to 1\wd7{\hss\accent"13\hss}}\penalty 10000
  \hskip-1\wd7\penalty 10000\box7}
  \def\cftil#1{\ifmmode\setbox7\hbox{$\accent"5E#1$}\else
  \setbox7\hbox{\accent"5E#1}\penalty 10000\relax\fi\raise 1\ht7
  \hbox{\lower1.15ex\hbox to 1\wd7{\hss\accent"7E\hss}}\penalty 10000
  \hskip-1\wd7\penalty 10000\box7}
  \def\polhk#1{\setbox0=\hbox{#1}{\ooalign{\hidewidth
  \lower1.5ex\hbox{`}\hidewidth\crcr\unhbox0}}}
\providecommand{\bysame}{\leavevmode\hbox to3em{\hrulefill}\thinspace}
\providecommand{\MR}{\relax\ifhmode\unskip\space\fi MR }
\providecommand{\MRhref}[2]{%
  \href{http://www.ams.org/mathscinet-getitem?mr=#1}{#2}
}
\providecommand{\href}[2]{#2}

\end{document}